\def\AA{\mathbb{A}}
\def\CC{\mathbb{C}}
\def\QQ{\mathbb{Q}}
\def\TT{\mathbb{T}}
\def\RR{\mathbb{R}}
\def\ZZ{\mathbb{Z}}
\def \lra{\longrightarrow}
\def \a{\alpha}
\def \e {\epsilon}
\def \aa {\pmb{a}}
\def \A {\mathcal{A}}
\def \g {\gamma}
\def \w {\omega}
\def \W {\mathcal{W}}
\def \l {\lambda}
\def \V {\mathfrak{U}}
\def \Si {\Sigma}
\def \v {v}
\def \ii {\iota}
\def \vv {\text{{\fontfamily{lmr}v\selectfont}}}
\def \d {\mathfrak{d}}
\def \OO {\mathcal{O}}
\def \OOO {\mathscr{O}}
\def \E {\mathcal{E}}
\def \K {U}
\def \M {\mathcal{M}}
\def \CX {\mathscr{X}}
\def \CZ {\mathscr{Z}}
\def \CM {\mathscr{M}}
\def \un{}
\def \ov{\overline}
\def \new {\text{new}}
\def \old {\text{old}}
\def \wh{\widehat}
\def \HHH {\mathcal{H}}
\def \CM {\mathscr{M}}
\def\gothb{\mathfrak{b}}
\def\gothc{\mathfrak{c}}
\def \nn {\mathfrak{n}}
\def \MM {\mathfrak{M}}
\def \k {\kappa}
\def\gothp{\mathfrak{p}}
\def\ps{\mathfrak{p}}
\def\gothq{\mathfrak{q}}
\DeclareMathOperator{\Sch}{Sch}
\DeclareMathOperator{\Set}{Set}
\DeclareMathOperator{\hH}{H}
\DeclareMathOperator{\End}{End}
\DeclareMathOperator{\Sp}{Sp}
\DeclareMathOperator{\val}{val}
\DeclareMathOperator{\Spf}{Spf}
\DeclareMathOperator{\Fred}{Fred}
\DeclareMathOperator{\disc}{Disc}
\DeclareMathOperator{\SL}{SL}
\DeclareMathOperator{\GL}{GL}
\DeclareMathOperator{\Hom}{Hom}
\DeclareMathOperator{\Res}{Res}
\theoremstyle{plain}
\newtheorem{thm}{Theorem}[subsection]
\newtheorem*{thm*}{Theorem}
\newtheorem{lem}[thm]{Lemma}
\newtheorem{cor}[thm]{Corollary}
\newtheorem*{con*}{Conjecture}
\newtheorem{prop}[thm]{Proposition}
\newtheorem*{prop*}{Proposition}
\theoremstyle{definition}
\newtheorem{defn}[thm]{Definition}
\theoremstyle{definition}
\newtheorem{rmrk}[thm]{Remark}
\newtheorem*{rmrk*}{Remark}
\newtheorem*{exmp*}{Example}
\newtheorem*{obs*}{Observation}
\newtheorem{nota}[thm]{Notation}
\title[Overconvergent Jacquet-Langlands correspondence]{The Jacquet-Langlands correspondence for overconvergent Hilbert modular forms}
\author{Christopher Birkbeck}
\begin{document}
	
	\maketitle

	\begin{abstract} We  use results by Chenevier to interpolate the classical Jacquet-Langlands correspondence for Hilbert modular forms, which gives us an extension of Chenevier's results to totally real fields. From this we  obtain an isomorphisms between eigenvarieties attached Hilbert modular forms and those attached to modular forms on a totally definite quaternion algebra over a totally real field of even degree.

	\end{abstract}

\tableofcontents

\section*{\textbf{ Introduction}} 
\counterwithout{thm}{subsection}
 Eigenvarieties are rigid analytic spaces which parametrize systems of hecke eigenvalues of finite slope eigenforms, for this reason they are the subject of much work in number theory. They were first introduced by Colem--Mazur in \cite{cmeig} and Buzzard in \cite{buzeig} and since then many constructions have appeared due to many authors. In particular, we can now construct eigenvarieties associated to any reductive group and one can ask to what extent can we use Langlands functoriality to find maps between different eigenvarieties. Take for example the Jacquet-Langlands correspondence, which tells us (roughly) that the classical spaces of quaternionic modular forms  $S_k^D(N)$ of weight $k$ and level $N$ on a quaternion algebra $D$ of discriminant $\d$ are isomorphic as Hecke modules to the spaces $S_k^{\d \text{-new}}(N \d)$ of $\d$-new modular forms for $\GL_2$. One can then ask if this extends to families of modular forms, i.e., if we can use this to relate the eigenvariety $\CX_D$ coming from  these quaternionic modular forms, to the eigenvariety $\CX_{\GL_2}$ coming from the usual spaces of modular forms. Over $\QQ$, this was answered by Chenevier in \cite{chenjlc}, who showed that there is a closed immersion $\CX_D \hookrightarrow \CX_{\GL_2}$ which interpolates the classical Jacquet-Langlands correspondence\footnote{In general Chenevier proves that one gets a isomorphism onto the $\d$-new `part' of the eigenvariety.}. This result is an instance of what is now called $p$-adic Langlands functoriality.  Other examples of this can be found in work of Hansen \cite{Hansen},  Ludwig \cite{lud},  and Newton \cite{jnew}.

Going back to the result of Chenevier, if one picks $D/\QQ$ to be a totally definite quaternion algebra, then the spaces of overconvergent quaternionic modular forms are very easy to define and work with since there is much simpler geometry involved. This means that if one is interested in studying or computing the action of the $U_p$ operator on the space of overconvergent modular forms, then it is possible to reduce to computing this on spaces of overconvergent quaternionic modular forms. Our goal here is to extend the results by Chenevier to a totally real field $F$. In particular, we have the following theorem.

\begin{thm} 
Let $D/F$ be a totally definite quaternion algebra  of  discriminant $\mathfrak{d}$ defined over a totally real field $F$. Let $p$ be a rational (unramified) prime and $\nn$ an integral ideal of $F$  such that  $p \nmid \nn\d$ and $(\nn,\d)=1$. Let $\CX_D^{red}(\nn p)$  be the nilreduction (see \ref{Red}) of the eigenvariety of level $\nn p$ attached to quaternionic modular forms on $D$. Similarly, let $\CX_{\GL_2}^{red}(\nn \d p)$ denote nilreduction of the eigenvariety associated to cuspidal Hilbert modular forms of level $\nn \d p$ (with the associated moduli problem for this level being representable)  as constructed in \cite{AIP}. Then there is a closed immersion $\iota_D:\CX_{D}^{red}(\nn p) \hookrightarrow \CX_{\GL_2}^{red}(\nn \d p)$ which interpolates the classical Jacquet-Langlands correspondence. Moreover, when $[F:\QQ]$ is even, one can choose $D$ with $\d=1$ so that the above is an isomorphism between the corresponding eigenvarieties.
\end{thm}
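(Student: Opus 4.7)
The plan is to adapt Chenevier's strategy from \cite{chenjlc}, transplanted from $\QQ$ to the totally real base field $F$. The key tool is Chenevier's \emph{interpolation principle} for eigenvarieties: given two reduced eigenvarieties $\CX_1, \CX_2$ over a common weight space, a morphism of their underlying Hecke algebras, and a Zariski-dense set of points of $\CX_1$ whose associated systems of Hecke eigenvalues also occur on $\CX_2$, one obtains a canonical closed immersion $\CX_1 \hookrightarrow \CX_2$. In our situation, $\CX_D^{red}(\nn p)$ is built from overconvergent quaternionic forms on the totally definite $D$ (whose geometry is very mild since the associated Shimura set is zero-dimensional), while $\CX_{\GL_2}^{red}(\nn \d p)$ is the eigenvariety from \cite{AIP}. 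The relevant Hecke algebra morphism is the natural identification of spherical Hecke operators at primes away from $\nn \d p$, together with the matching of $U_\ps$ operators for $\ps \mid p$.

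The next step is to produce a Zariski-dense set of classical points of $\CX_D^{red}(\nn p)$ whose Hecke eigensystems all appear on $\CX_{\GL_2}^{red}(\nn \d p)$. For this I would take the finite-slope classical points of sufficiently regular cohomological weight $(k, v)$; these are Zariski dense in $\CX_D^{red}(\nn p)$ by the analogue of Coleman's small-slope classicality theorem, which in the totally definite setting is essentially a statement about representations of the finite group $(D \otimes \wh{\ZZ})^\times$ and so is comparatively soft to establish. At each such classical point, the classical Jacquet--Langlands correspondence for Hilbert modular forms matches the system of quaternionic Hecke eigenvalues with that of a classical Hilbert cusp form of the same weight, level $\nn \d p$, new at every prime dividing $\d$; this cusp form together with its $p$-stabilisations gives rise to a classical point of $\CX_{\GL_2}^{red}(\nn \d p)$. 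Chenevier's interpolation principle then produces the desired closed immersion $\iota_D$, and this map interpolates classical Jacquet--Langlands by construction.

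For the isomorphism statement, when $[F:\QQ]$ is even one may choose $D$ to be the (unique up to isomorphism) totally definite quaternion algebra over $F$ that is unramified at every finite place, so that $\d = 1$ and the newness condition is vacuous. Classical Jacquet--Langlands is then a bijection between quaternionic and Hilbert cuspidal eigensystems at every classical weight, so the image of $\iota_D$ contains every classical point of $\CX_{\GL_2}^{red}(\nn p)$; since those points are Zariski dense and the image of a closed immersion is closed, $\iota_D$ is surjective, and being a closed immersion of reduced rigid analytic spaces it is therefore an isomorphism. The main obstacle in carrying this out is not a single dramatic step but the bookkeeping required to align Chenevier's framework with the AIP construction: one must verify that the notion of ``classical point'' and the action of the Hecke algebra on the AIP side are genuinely compatible with those arising from the quaternionic side, and that the small-slope classicality results available in the Hilbert setting are sharp enough to produce the required overlap of classical points. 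Once these compatibilities are in place, the density of classical points together with the classical Jacquet--Langlands matching does the rest.
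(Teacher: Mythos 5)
Your overall strategy is the one the paper follows: invoke Chenevier's interpolation theorem for the two sets of eigenvariety data (AIP-style overconvergent Hilbert cusp forms and Buzzard-style overconvergent quaternionic forms over the common weight space $\W^G$), feed in classical Jacquet--Langlands at a dense set of classical weights, and deduce a closed immersion; when $g$ is even and $\d=1$ the newness condition disappears and one upgrades to an isomorphism. Your route to the isomorphism (a surjective closed immersion of reduced spaces is an isomorphism) is a legitimate alternative to the paper's Corollary~\ref{coro}, which instead produces two mutually inverse closed immersions from the \emph{equality} of characteristic polynomials.

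There are, however, two points where what you wrote is off. First, the hypothesis of Theorem~\ref{int} is not merely that the Hecke eigensystems of a Zariski-dense set of points of $\CX_D$ also occur on $\CX_G$; it is that, over a very Zariski dense subset $X$ of $\W^G$ equipped with \emph{classical structures} $\CM^{cl}_i$ in the precise sense of Definition~\ref{51}, one has a pointwise divisibility $\det(1-\psi_1(tU)Y\mid_{\CM^{cl}_{1,x}}) \mid \det(1-\psi_2(tU)Y\mid_{\CM^{cl}_{2,x}})$. This divisibility (which records multiplicities, not just the set of eigensystems) is what classical Jacquet--Langlands supplies, because it is an isomorphism of Hecke modules and not merely a bijection of eigensystems; your formulation drops exactly the data that the proof needs. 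Second, your characterization of small-slope classicality on the quaternionic side is wrong: $(D\otimes\wh\ZZ)^\times$ is not a finite group (the finite object is the double coset space $D^\times\backslash D^\times(\AA_f)/U$, which makes the \emph{classical} spaces finite-dimensional), and the control theorem (Theorem~\ref{buzeig} area, i.e.\ the Control Theorem of Section~\ref{Hecke}) is not a soft representation-theoretic fact. It is a genuine slope estimate proved via the $\Theta$-operators $\Theta_i$, the analogue of Coleman's theta-cycle argument; the overconvergent spaces are infinite-dimensional Banach spaces of locally analytic functions and classicality is a statement about $U_\gothp$-slopes, not about representations of any finite group. It is fair to say this is technically lighter than the analytic-continuation argument needed on the Hilbert side (\cite{tian}, used in Theorem~\ref{AIPeig}(e)), and the paper arranges matters so that the two classicality bounds match; but ``soft'' mischaracterizes what is being invoked.
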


Alongside this we also prove following:

\begin{thm}
The eigenvarieties $\CX_D(\nn p)$ and $\CX_{\GL_2}(\nn p) $ as above are reduced, meaing that $\CX_{*}^{red} (\nn p) \cong \CX_{*}(\nn p)$ for $* \in \{D,\GL_2\}$.
\end{thm}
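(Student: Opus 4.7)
The plan is to apply Chenevier's reducedness criterion (cf.\ \cite[Prop.~3.9]{chenjlc}): an eigenvariety $\CX$ is reduced provided it contains a Zariski dense subset $Z$ of points at which the Hecke algebra acts semisimply on the corresponding overconvergent Hecke eigenspace. Since $\CX_*(\nn p)$ is built locally on an admissible cover of weight space by affinoids $V$ as $\Spa T_V$, with $T_V$ the Hecke algebra acting on the finite-slope part $M_V$ of the $V$-family of overconvergent forms, it suffices to show each $T_V$ is reduced, which in turn amounts to showing $T_V \otimes_{\OO(V)} \kappa(z)$ is reduced for a Zariski dense set of closed points $z \in \Spa T_V$.

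I would take $Z$ to be the set of non-critical classical points of $\CX_*(\nn p)$, that is, those lying over cohomological weights $k$ and having $U_p$-slope strictly less than the small-slope classicality bound for $k$. By the classicality theorem of \cite{AIP} on the Hilbert side, and by direct inspection in the (far simpler) quaternionic case, the overconvergent Hecke eigenspace at such a $z$ coincides with the corresponding classical eigenspace. The density of cohomological weights in weight space, combined with the boundedness of slopes on any fixed affinoid, shows that $Z$ accumulates at every point of $\Spa T_V$ and is Zariski dense there.

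Semisimplicity of the Hecke action at each $z \in Z$ is then a classical input: the away-from-$\nn p \d$ Hecke operators are normal with respect to the Petersson inner product, so they act semisimply on the finite-dimensional classical cuspidal space; strong multiplicity one for Hilbert cuspforms (with its quaternionic counterpart via the classical Jacquet--Langlands correspondence) forces each joint eigenspace of the prime-to-$p$ Hecke algebra to be spanned by $p$-stabilisations of a single newform; and on this subspace $U_p$ acts semisimply in the non-critical slope range, yielding semisimplicity of the full Hecke algebra.

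The main obstacle is exactly this $U_p$-semisimplicity at classical points, since $U_p$ is not Petersson-normal and could \emph{a priori} have Jordan blocks on $p$-stabilisation subspaces. This is controlled by the non-critical slope hypothesis, which forces the two roots of the relevant $p$-th Hecke polynomials to be distinct in the cases of interest; once this point is in hand, the argument is parallel to Chenevier's over $\QQ$ in \cite{chenjlc} and transports to both eigenvarieties under consideration, with the quaternionic side simplifying substantially because $M_V$ is already a finite free $\OO(V)$-module and density of classical points is essentially tautological there.
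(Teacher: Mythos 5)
Your overall strategy is the one the paper takes: apply Chenevier's reducedness criterion (Lemma~\ref{red}) on the Zariski-dense set of classical points, invoke the control theorems to identify overconvergent and classical Hecke eigenspaces there, observe that the prime-to-$\nn p\d$ operators are normal (hence semisimple), and reduce the question to semisimplicity of the $U_{\gothp_i}$ on the slope-$\leq h$ $\gothp_i$-old part, which one controls via the quadratic $\gothp_i$-Hecke polynomial $X^2 - a_{\gothp_i}X + \Psi(\gothp_i)N(\gothp_i)^{r+1}$. Strong multiplicity one vs.\ the explicit Atkin--Lehner $\{f,f|B_{\gothp_i}\}$ basis is a cosmetic difference.

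However, there is a genuine gap in the step where you invoke ``the non-critical slope hypothesis, which forces the two roots of the relevant $p$-th Hecke polynomials to be distinct.'' The non-critical (classicality) bound of Theorem~\ref{AIPeig}(e) and the quaternionic control theorem is roughly $h < v_{\gothp_i}(k,r) + \min_j(k_j-1)$; in the case $F=\QQ$ this is $h < k-1$. The two roots $\alpha,\beta$ of the $\gothp_i$-Hecke polynomial satisfy $\val(\alpha)+\val(\beta)=l_{\gothp_i}(r+1)$, and the classicality bound does \emph{not} prevent both roots from having slope $\leq h$ (e.g.\ over $\QQ$, any $h$ with $(k-1)/2 \leq h < k-1$); in that regime, semisimplicity of $U_{\gothp_i}$ on the $2$-dimensional old subspace is equivalent to the separability $a_{\gothp_i}^2 \neq 4\Psi(\gothp_i)N(\gothp_i)^{r+1}$, which is an open problem in general (the ``Coleman--Edixhoven'' question). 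The paper circumvents this precisely by \emph{not} relying on the classicality bound for this step: in Lemma~\ref{u} one fixes $h$ and restricts further to the Zariski-dense set of weights with $r > (2h - l_{\gothp_i})/l_{\gothp_i}$, i.e.\ $h < l_{\gothp_i}(r+1)/2$, which forces the Newton polygon of the quadratic to have exactly one root of valuation $\leq h$; then $U_{\gothp_i}$ acts on the slope-$\leq h$ part of that old subspace as the scalar given by the small root, and semisimplicity is immediate without any appeal to separability. You also implicitly use the fact that slope-$\leq h$ for $U_p$ controls the $U_{\gothp_i}$-slopes (the content of Lemma~\ref{7.8}); this is straightforward but should be stated. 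With the ``distinct roots'' claim replaced by the ``only one small-slope root for $r$ large'' argument, your proof matches the paper's.
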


To prove the above results, we first recall the construction of the relevant eigenvarieties, making small modifications so that we can apply Chenevier's Interpolation theorem. In the Hilbert case, we construct the spaces of overconvergent cuspidal Hilbert modular forms following \cite{AIP} and then use Buzzard's eigenmachine to construct the relevant eigenvariety for which we can apply the Interpolation theorem. In the quaternionic case we construct the spaces of overconvergent quaternionic modular forms following \cite{buzeig}, but modifying the weight space so that it is equidimensional of dimension $[F:\QQ]+1$ and therefore matches with the weight space on the Hilbert eigenvariety.

The importance of the above theorems, is that it allows us to study the geometry of the Hilbert eigenvariety by studying the quaternionic eigenvariety which has a more combinatorial nature. Similar ideas have been used in \cite{jacobs,slbd,eovb} to understand the geometry of eigenvarieties attached to modular forms over $\QQ$. Furthermore, the above theorem is one of the key ingredients in \cite{ME}, where it is used to compute slopes of Hilbert modular forms, which in turn gives insights into the geometry of the relevant eigenvarieties.

Moreover, by working with totally real fields of even degree we obtain isomorphisms between the relevant eigenvarieties and not just closed immersion, which is a stronger result than what is available over $\QQ$. In practice, this means that, in this case, we can study the full (cuspidal) Hilbert eigenvariety by using the quaternionic one.

\subsection*{Acknowledgements} The author would like to thank his supervisor Lassina Demb\'el\'e for his support and guidance. He would also like to thank Fabrizio Andreatta, David Hansen for useful comments and clarifications. This work is part of the authors thesis, so he wishes to thank the examiners Kevin Buzzard and David Loeffler for their careful reading and suggestions. Finally, he would like to thank the referee for the very useful feedback.\footnote{This work was partially supported by an EPSRC DTG at Warwick University and a EPSRC Doctoral Prize Fellowship at University College London, EP/N509577/1.}  
\counterwithin{thm}{subsection}

\begin{section}{\textbf{ Notation}}\label{notation}
\setcounter{subsection}{1}
	\begin{nota}\label{allnot}
		
		\begin{enumerate}[$(1)$]
			\item Let  $F$ be a totally real field of degree $g$, and fix a rational prime $p$ which is unramified with the exception of Sections $3$ and $4$ where it may be ramified unless otherwise stated.
			
			\item  Let $D$ be a quaternion algebra over $F$ with a fixed maximal order $\OO_D$ and let $G_D=\Res_{F/\QQ}(D^\times)$ (we will sometimes abuse notation and denote this simply by $D$). When $D=M_2(F)$ we denote this simply as $G$. We fix an isomorphism $\OO_D \otimes_{\OO_F} \OO_p \cong M_2(\OO_p)$, which  induces an isomorphism $D_p:=D \otimes_{F} F_p \cong M_2(F_p)$. Lastly, let $T$ denote a fixed maximal torus of $G_D$ and $\TT=\Res_{\OO_F/\ZZ}\mathbb{G}_m$.

			\item Let $\OO_F$ denote the ring of integers of $F$ and let $\d_F$ denote the different ideal of $F$. For each finite place $\vv$ of $F$ let $F_\vv$ denote the completion of $F$ with respect to $\vv$ and $\OO_\vv$ the ring of integers of $F_\vv$. For an integral ideal $\nn$, let $F_\nn= \prod_{\vv|\nn} F_\vv$ and similarly let $\OO_\nn=\prod_{\vv \mid \nn} \OO_{\vv}.$ In particular, if we have $p\OO_F=\prod_{i=1}^f \gothp_i$, then let $\OO_p=\oplus_i \OO_{\gothp_i}=\OO_F \otimes \ZZ_p$.

			\item Let $\Sigma$ to be the set of all places of $F$, $\Sigma_p$ be the set of all finite places above $p$ and $\Sigma_\infty \subset \Sigma$ the set of all infinite places of $F$.

			\item Let $\ov{\QQ}$ denote the algebraic closure of $\QQ$ inside $\CC$ and we fix an algebraic closure $\ov{\QQ}_p$ of $\QQ_p$. Furthermore, we fix  embeddings $inc:\ov{\QQ} \to \CC$ and $inc_p: \ov{\QQ} \to \ov{\QQ}_p$, which allow us to think of the elements of $\ov{\QQ}$ as both complex and $p$-adic numbers.

	\item
				Let $\pi_\ps$ denote the uniformisers  of $F_{\ps}$ and $\pi \in \OO_p$ be the element whose $\ps$ component of $\OO_p$ is $\pi_\ps$. By abuse of notation we also let $\pi$ denote the ideal of $\OO_F$ which is the product of all the primes ideals above $p$, i.e., the radical of $p\OO_F$.

			\item For each $\vv \in \Sigma_\infty$, we have a field embedding $\ii_\vv$ of $F$ into $\CC$ given by $\vv$. This map extends to a map $F_p \to \ov{\QQ}_p$ and then factors through the projection $F_p \to F_{\gothp}$ for some $\gothp$ above $p$. This then gives a natural surjection $\Sigma_\infty \to \Sigma_p$ where $\vv \mapsto \ps_\vv$.  For each prime ideal $\ps_j \in \Sigma_p$  let $\Si_{\ps_j}$ be the set of  $\vv \in \Sigma_\infty$ factoring through the projection $F_p \to F_{\ps_j}$.
			\item Let $L$ be a complete extension of $\QQ_p$, which contains the compositum of the images of $F$ under $\iota \circ \ii_\vv$, for $\vv \in \Sigma_{\infty}$ where $\ii: \CC \overset{\sim}\to \ov{\QQ}_p$ such that $\ii \circ inc= inc_p$.

		\end{enumerate}

	\end{nota}	
	
\end{section}

\begin{section}{\textbf{ Eigenvarieties and the Interpolation Theorem}}\label{eig}

We begin by briefly reviewing the construction of eigenvarieties and Chenevier's Interpolation Theorem. More details on general constructions can be found in many places, for example \cite{cmeig,chenjlc,buzeig,Hansen}. 

\subsection{\textbf{ The weight space}}\label{wei}
	
The weight space is a rigid analytic variety that allows us to make precise the idea of modular forms `living' in $p$-adic families. Following \cite{ME}, we begin with the classical definition of a weight of a Hilbert modular form.

\begin{defn}\label{cw}
	Let $n \in \ZZ_{\geq 0}^{\Sigma_\infty}$ and $v \in \ZZ^{\Sigma_\infty}$ such that $n+2v=(r,\dots, r)$ for some $r \in \ZZ$ and set $k= n+2$ and $w=v+n+1$. By abuse of notation we denote $(r,\dots,r)$ by $r$ for $r \in \ZZ$.  Note that all the entries of $k$ will have the same parity and $k=2w-r$. We call the pair $(k,r) \in \ZZ_{\geq 2}^{\Sigma_\infty} \times \ZZ$ a {\it classical algebraic weight}.   Note that given $k$ (with all entries paritious and greater than $2$) and $r$ we can recover $n,v,w$. In what follows we will move between both descriptions when convenient. We will call $(k,r,n,v,w)$ satisfying the above a {\it weight tuple}.
	
	A weight is called {\it arithmetic } or {\it classical} if it is the product of an algebraic character $(k,r)$ and a finite character $\psi$, which we denote by $(\k_{\psi},r)$.
		
	\end{defn}

\begin{defn}\label{wtspdf}\label{wtspstr}Let $\TT=\Res_{\OO_F/\ZZ} \mathbb{G}_m$. We define $\W^{G}$ to be the rigid analytic space over $L$ associated to the completed group algebra $\OO_L\llbracket\TT(\ZZ_p)\times \ZZ_p^\times\rrbracket$. We call $\W^{G}$ the {\it{weight space for}} $G$. Moreover, one has a universal character $$[-]:\TT(\ZZ_p) \times \ZZ_p^{\times} \lra \OO_L\llbracket\TT(\ZZ_p)\times \ZZ_p^\times\rrbracket^\times.$$
\end{defn}

	It follows from the above that $\W^G(\CC_p)=\Hom_{cts}(\TT(\ZZ_p)\times \ZZ_p^\times, \CC_p^\times)$. Moreover, 	we note that $\TT(\ZZ_p) \times \ZZ_p^{\times} \cong H \times \ZZ_p^{g+1}$, where $H$ is the torsion subgroup of $\TT(\ZZ_p) \times \ZZ_p^{\times}$. From this it follows that \[\W^{G} \cong H^{\vee} \times B(1,1)^{g+1} \cong \bigsqcup_{\chi \in H^\vee} \W_\chi\] as rigid spaces,  where $H^\vee$ is the character group of $H$ and $B(1,1)$ is the open ball of radius 1 around 1. It is clear from this that $\W^G$ is equidimensional of dimension $g+1$.

	\begin{nota}
	Elements of $\W^{G}(\CC_p)$ will be given by $v:\TT(\ZZ_p) \to \CC_p^{\times}$ and $r: \ZZ_p^{\times} \to \CC_p^{\times}$. Setting $n=-2v+r$ and $\k=n+2$, we will denote these weights as $(\k,r)$ and call $(\k,r,n,v,w)$ a weight tuple if $\k,r,n,v,w$ satisfy the same relations as in \ref{cw}. More generally, if $\V$ is an affinoid with a morphism of rigid spaces $\V \to \W^{G}$, then we will denote by $(\k^{\V},r^{\V})$ the restriction of the universal character to $\V$.
	\end{nota}

	\begin{defn}
		Let $(k,r,n,v,w)$ be a weight tuple with $(k,r) \in \ZZ^{\Sigma_\infty} \times \ZZ$ a classical algebraic weight. This defines an {\it algebraic weight}  by sending $(a,b) \in \TT(\ZZ_p)\times \ZZ_p^\times$ to $a^v b^r$.
	\end{defn}

	\begin{nota}\label{2.3.1}
		There is a natural map $\TT(\ZZ_p) \to \TT(\ZZ_p) \times \ZZ_p^{\times}$ given by $t \mapsto (t^{-2}, N_{F/\QQ}(t))$. In this way we view weights $(\k,r) \in \W^G$ with $(\k,r,n,v,w)$ a weight tuple as maps  $\TT(\ZZ_p) \to \CC_p$ given by $t \mapsto n(t)$.
	\end{nota}

	\begin{rmrk}\label{wtsp}
		In the literature there are slightly more general weight spaces than the one we have introduced. One alternative way of defining the weight space is to let $\W'$ denote the rigid analytic space associated to the completed group algebra  $\OO_L\llbracket T(\ZZ_p)\rrbracket$, where $T$ is a fixed maximal torus of $G$. The problem with this weight space is that it contains too many weights for which the associated spaces of modular forms would be empty. For this reason one usually imposes suitable vanishing conditions on these weights.  See \cite[Part III]{buzeig} and \cite[4.3.2]{urb}. The weight spaces one gets this way conjecturally have dimension $g+1$ (dependent on Leopoldt's conjecture).For this reason we have chosen to work with $\W^G$ which has the correct dimension.  Moreover, if Leopoldt's conjecture is true then the resulting eigenvarieties for the different weight spaces will be isomorphic.
		
	\end{rmrk}

Later, when defining the spaces of locally analytic functions it will be convenient for us to extend the  definition of the weight space from $\TT$ to $T$, which denotes a fixed maximal torus of $G$. We do this as follows:

\begin{defn}\label{maxtors}
	Let $(\k,r,n,v,w)$ be a weight tuple with $(\kappa,r) \in \W^G$ and set $$\l_{\kappa,r} \left ( \begin{matrix} a&0\\ 0&d \end{matrix} \right) =\l_1(a)\l_2(d)$$ where $\l_1=(r+n)/2$, $\l_2=(r-n)/2$. 
\end{defn}

\begin{rmrk}
	Note that if we map $T(\ZZ_p)$ to $\TT(\ZZ_p) \times \ZZ_p^\times$ via $\left ( \begin{smallmatrix}
	a & 0 \\ 0& d
	\end{smallmatrix} \right ) \mapsto (a/d, \text{Norm}(a))$ then our weights on $T$ and $\TT$ agree. 
\end{rmrk}
Using this, we talk about weights $\l$  on $T$ where we implicitly assume that there is some $(\kappa,r) \in \W^G$ such that $\l=\l_{\kappa,r}$. This construction then lets us take a weight $\W^G$ and get a weight in $\W'$ as in Remark \ref{wtsp}.

\begin{subsection}{Eigenvarieties}
	
	In order to define an eigenvariety $\mathscr{X}$, we need to specify the eigendata to which it is associated. We begin by recalling some standard definitions that can be found in \cite{buzeig,Hansen}.
 
 \begin{defn}Let $U$ be a compact operator on Banach space $M$. We define the \textit{Fredholm determinant} as \[\Fred_{M}(U)=\det(1-XU|M):=\lim_i \det(1-XU_i | M_i),\] where $(M_i)_i$ is a sequence of projective and finitely generated Banach modules such that $U_i:=U|_{M_i}$ converges to $U$ as $i \to \infty$.
 \end{defn}
 
For our purposes,  we will study the subspace of $\W^G \times \AA^1$ cut out by $f:=\Fred_{S^\dagger}(U_p)$, denoted $\CZ(U_p)$ or $\CZ(f)$, where $\W^G$ is the weight space as in Definition \ref{wtspdf} and $U_p$ is a compact operator on a space $S^\dagger$ of overconvergent Hilbert modular forms. These will give us our spectral varieties. 

\begin{defn} Let $ \V \subset \W^G$ be an affinoid and $\CZ(f)$ a spectral variety.  Define $\CZ_{\V,h}=\OOO(\V)\langle p^hX \rangle / (f(X)),$ which we view as an admissible affinoid open subset of $\CZ(f)$. We have a natural map $\CZ_{\V,h} \to \V$, which is flat but might not be finite. We say that $\CZ_{\V,h}$ is \textit{slope-adapted} if the above map is finite and flat.

\end{defn}

\begin{rmrk}
It is the possible to show that $\CZ_{\V,h}$ is slope adapted if and only if $f|_\V$ admits a slope $\leq h$ factorization $f|_\V(X)=Q(X)R(X)$, from which we have $\OOO(\CZ_{\V,h})=\OOO(\V)[X]/(Q(X))$. Then \cite[Theorem 4.6]{buzeig} tells us that the collection slope-adapted affinoids is an admissible cover of $\CZ(f)$. 
\end{rmrk}

\begin{defn}Let $A$ be commutative  Noetherian $K$-Banach algebra, for $K$ a field complete with respect to a non-trivial non-archimedean norm.  Following \cite{buzeig}, we say a Banach $A$-module $P$ satisfies property $(Pr)$ if there is a Banach $A$-module $Q$, such that $P \oplus Q$ (with its usual norm) is 
	potentially ON-able.
\end{defn}

\begin{defn}
	Let $M_1,M_2$ be Banach $R$-modules satisfying $(Pr)$ for $R$ a reduced affinoid and $\pmb{T}$ a commutative $R$-algebra with maps $\psi_i: \pmb{T} \to \End_R(M_i)$. Let $U \in \pmb{T}$ act compactly on both $M_1$ and $M_2$. A continuous $R$-module and $\pmb{T}$-module homomorphism $\a: M_1 \to M_2$ is called a {\it primitive link} if there is a compact $R$-linear and $\pmb{T}$-linear map $c: M_2 \to M_1$ such that $\psi_2(U): M_2 \to M_2$ is $\a \circ c$ and $\psi_{1}(U): M_1 \to M_1$ is $c \circ \a$. More generally a continuous $R$-module and $\pmb{T}$-module homomorphism $\a: M' \to M$ is a {\it link} if there exists a sequence $M_i$ of Banach $R$-modules satisfying $(Pr)$ for $i \in \{0,\dots,n\}$ such that $M'=M_0$, $M=M_n$ and $\a$ factors as a compositum of maps $\a_i:M_i \to M_{i+1}$ with $\a_i$ a primitive link.
\end{defn}

\begin{defn}\label{eigdat}
	Let $\W$ be a reduced rigid space, $R$ a reduced affinoid and $\pmb{T}$ be a commutative $R$-algebra with a specified element $U$. For admissible affinoid open $\V \subset \W$ let $M(\V)$  a  Banach $\OOO(\V)$-module  satisfying $(Pr)$ with an $R$-module homomorphism $\psi_\V: \pmb{T} \to \End_{\OOO(\V)}(M_\V)$ such that $\psi_\V(U)$ is compact. Finally assume that if $\V \subset \V' \in \W$ are two admissible affinoid opens, then there is a continuous $\OOO(\V)$-module homomorphism $\a: M_{\V} \to M_{\V'} \hat{\otimes}_{\OOO(\V')} \OOO(\V)$ which is a link and such that if $\V_1 \subset \V_2 \subset \V_2 \subset \W$ are all affinoid subdomains then $\a_{13}=\a_{23} \circ \a_{12}$ for $\a_{ij}: M_{\V_i}  \to M_{\V_i} \widehat{\otimes}_{\OOO(\V_i)} \OOO(\V_j)$.
	
	We give the name of {\it eigendata} or {\it eigenvariety data}, to tuple $\mathfrak{E}=(\W,\CM,\pmb{T},U)$ where $\CM$ is the coherent sheaf defined by the $M_\V$.

\end{defn}	

\begin{thm}[\bf The Eigenmachine]\label{th1} Attached to $\mathfrak{E}=(\W,\CM,\pmb{T},U)$ there is a canonically associated rigid space $\CX(\mathfrak{E})$ with a finite morphism to the spectral variety $\CZ(U)$ defined by $U$ and whose points over $z \in \CZ(U)$ are in bijection with the generalized eigenspace for the action of\/ $\pmb{T}$ on the fibre $\CM_z$. Moreover, if\/ $\W$ is equidimensional of dimension $n$, then so is $\CX(\mathfrak{E})$.
	
\end{thm}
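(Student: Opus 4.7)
The plan is to follow Buzzard's eigenvariety machine: construct the eigenvariety locally from Riesz decompositions associated to slope-adapted affinoids, and then glue these pieces using the link condition in the definition of eigendata.

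\textbf{Local construction.} First, I would use the admissible cover of $\CZ(U)$ by slope-adapted affinoids $\CZ_{\V,h}$ provided by \cite[Theorem 4.6]{buzeig}. For such a pair $(\V,h)$ the restriction $f|_\V = \Fred_{M_\V}(U)$ admits a slope $\leq h$ factorization $f|_\V(X) = Q(X)R(X)$ with $Q$ a polynomial whose leading coefficient is a unit. Riesz theory for compact operators on Banach modules satisfying $(Pr)$ (see \cite[\S5]{buzeig}) then produces a canonical direct sum decomposition
\[
M_\V = N_{\V,h} \oplus F_{\V,h}
\]
preserved by every operator commuting with $U$, where $N_{\V,h}$ is a finite projective $\OOO(\V)$-module on which $Q^*(U)$ (with $Q^*$ the reciprocal polynomial) vanishes, and $U$ acts invertibly on $N_{\V,h}$ with characteristic polynomial matching $Q$. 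Since $\pmb{T}$ commutes with $U$ via $\psi_\V$, its action preserves $N_{\V,h}$, giving a homomorphism
\[
\psi_{\V,h}:\pmb{T}\longrightarrow\End_{\OOO(\V)}(N_{\V,h}).
\]
Define $\pmb{T}_{\V,h}$ to be the image of $\psi_{\V,h}$; this is a commutative, finite $\OOO(\V)$-algebra, and I set $\CX_{\V,h}:=\Sp(\pmb{T}_{\V,h})$. By construction the inclusion $\OOO(\CZ_{\V,h})=\OOO(\V)[X]/(Q(X))\hookrightarrow \pmb{T}_{\V,h}$ (sending $X$ to the image of $U$) makes $\CX_{\V,h}\to\CZ_{\V,h}$ a finite morphism of affinoids.

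\textbf{Fiber description and gluing.} For a point $z\in\CZ_{\V,h}$ lying over a weight $w\in\V$ with $U$-eigenvalue $\alpha$, the fiber of $\CX_{\V,h}$ at $z$ is $\Sp$ of the localization of $\pmb{T}_{\V,h}\otimes_{\OOO(\V)}k(w)$ at the maximal ideal cutting out $\alpha$. By Riesz theory the fiber of $N_{\V,h}$ at $w$ is exactly the slope $\leq h$ subspace of the fiber $\CM_w$, so the points of the fiber of $\CX_{\V,h}$ over $z$ are in bijection with the generalized $\pmb{T}$-eigenspaces in $\CM_w$ with $U$-eigenvalue $\alpha$, as required. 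To glue the $\CX_{\V,h}$ into a global rigid space $\CX(\mathfrak E)$, I would verify that for $\V'\subset \V$ an affinoid subdomain and $(\V',h)$ still slope-adapted, the transition link $\alpha:M_\V\to M_{\V'}\widehat\otimes_{\OOO(\V')}\OOO(\V)$ is compatible with the Riesz decompositions, i.e.\ sends $N_{\V,h}$ onto $N_{\V',h}\otimes_{\OOO(\V')}\OOO(\V)$ compatibly with $\pmb{T}$-actions. The cocycle compatibility $\alpha_{13}=\alpha_{23}\circ\alpha_{12}$ then ensures the induced isomorphisms $\pmb{T}_{\V,h}\otimes_{\OOO(\V)}\OOO(\V')\cong \pmb{T}_{\V',h}$ satisfy the cocycle condition, so the $\CX_{\V,h}$ glue over the admissible cover of $\CZ(U)$ by slope-adapted pieces.

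\textbf{Main obstacle.} The essential point is the compatibility of Riesz decompositions under primitive links, which is where the definition of a link is really used: if $\alpha:M_1\to M_2$ and $c:M_2\to M_1$ satisfy $\psi_2(U)=\alpha\circ c$ and $\psi_1(U)=c\circ\alpha$, then $\alpha$ and $c$ interchange the slope $\leq h$ subspaces and are mutually inverse there, so they identify the resulting finite $\pmb{T}$-algebras. This is the most delicate step and is where the full force of the link hypothesis (rather than just a continuous $\pmb{T}$-module map) is needed; once established for primitive links, it follows for arbitrary links by composition.

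\textbf{Equidimensionality.} Finally, if $\W$ is equidimensional of dimension $n$, then each admissible affinoid $\V\subset \W$ has dimension $n$, and $\CZ_{\V,h}\to\V$ is finite flat, so $\CZ_{\V,h}$ is equidimensional of dimension $n$; since slope-adapted affinoids cover $\CZ(U)$ admissibly, $\CZ(U)$ is equidimensional of dimension $n$. As $\CX(\mathfrak E)\to\CZ(U)$ is finite by construction, $\CX(\mathfrak E)$ is equidimensional of dimension $n$ as well.
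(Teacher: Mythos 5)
Your plan is a faithful unpacking of Buzzard's Construction 5.7, which is exactly what the paper cites for this theorem: the Riesz decomposition on slope-adapted affinoids, the definition $\CX_{\V,h}=\Sp(\pmb{T}_{\V,h})$ with $\pmb{T}_{\V,h}$ the image of $\pmb{T}$ in $\End_{\OOO(\V)}(N_{\V,h})$, the fibre description, and the gluing via the cocycle condition on links all match \cite[Section 5]{buzeig}, and you have correctly singled out the compatibility of Riesz decompositions under primitive links as the delicate point.

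There is, however, a genuine gap in your last paragraph. From finiteness of $\CX(\mathfrak E)\to\CZ(U)$ and equidimensionality of $\CZ(U)$ you infer equidimensionality of $\CX(\mathfrak E)$, but a finite morphism does \emph{not} preserve equidimensionality in that direction: the source can have an irreducible component of strictly smaller dimension (a rational point of an affinoid curve is finite over the curve). The missing input, which is what \cite[Lemma 5.9]{buzeig} supplies, is that $\pmb{T}_{\V,h}$ sits inside $\End_{\OOO(\V)}(N_{\V,h})$ and $N_{\V,h}$ is finite projective (hence flat) over $\OOO(\V)$; consequently $\pmb{T}_{\V,h}$ is a finite $\OOO(\V)$-module on which every non-zerodivisor of $\OOO(\V)$ acts injectively, so all its associated primes contract to minimal primes of $\OOO(\V)$. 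Combined with going-up for the finite extension $\OOO(\V)\to\pmb{T}_{\V,h}$, this forces each component of $\CX_{\V,h}$ to have dimension exactly $\dim\V=n$. You also write the map $\OOO(\CZ_{\V,h})\to\pmb{T}_{\V,h}$ as $X\mapsto U$; with the convention $\Fred=\det(1-XU)$ the image of $U$ in $\pmb{T}_{\V,h}$ is killed by the reversed polynomial $Q^*$, not by $Q$, so $X$ should be sent to $U^{-1}$ (which exists on $N_{\V,h}$), a harmless but worth-noting sign of the Fredholm normalization.
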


\begin{proof}
	This follows from \cite[Construction 5.7, Lemmas 5.8-5.9]{buzeig}.
\end{proof}

\end{subsection}

\begin{subsection}{The Interpolation Theorem}

In this section we will recall Chenevier's interpolation theorem, which we will use in order to interpolate the Jacquet-Langlands correspondence. For this we will need to find a very Zariski dense subset of the weight space, together with a classical structure on it. As the name suggests, the classical structure will be given by  the subspace of classical modular forms inside the space of overconvergent modular forms. We then use this to find closed immersions between different eigenvarieties by relating their classical structures.  In our case, it will be the classical Jacquet-Langlands correspondence that will allow us to relate the classical structures. Following \cite{chenjlc}, we have:
\begin{defn}
	A subset $X \subset Z$ is Zariski dense in $Z$ if for every analytic subset (see \cite[Section 9.5.2]{bgr}) $Y \subset Z$ such that $X \subset Y$, then $Y=Z$.
\end{defn}

\begin{defn} A Zariski dense subset $X \subset \W^G(\CC_p)$ is \textit{{very Zariski dense}} if for each $x \in X$ and each irreducible admissible affinoid open $V \subset \W^G$ containing $x$, we have that $V(\CC_p) \cap X$ is Zariski dense in $X$.

\end{defn}

From this we define the classical structures as follows: 
	\begin{defn}\label{51} Let $\mathfrak{E}=(\W,\CM,\pmb{T},U)$ be a set of eigendata as above and let $X \subset \W$ be a very Zariski dense subset. For each $x \in X$, let $\CM^{cl}_x$ be a  finite dimensional $\pmb{T}$-module contained in $\CM_x$ and, for every $h \in \RR$, set $X_{h}=\{x \in X \mid \CM^{\leq h}_x \subset \CM^{cl}_x \}.$ We say that $\CM^{cl}$ gives a {\it classical structure} on $X$ if for every open affinoid neighbourhood $V \subset \W$ and every $h$, the sets $X\cap V, X_h \cap V$ have the same Zariski closure in $V$.
	
\end{defn}

\begin{defn}\label{Red} If $\CX$ is an eigenvariety, with eigendata $\mathfrak{E}=(\W,\CM,\pmb{T},U)$, we denote the {\it{nilreduction}} of $\CX$ by $\CX^{red}$, and we say that an eigenvariety is {\it{reduced}} if $\CX^{red} \cong \CX$. 
\end{defn}

With these definitions we can now state the Chenevier's {\it Interpolation theorem.}

\begin{thm}(Chenevier)\label{int} Let\/ $\CX_i$ be eigenvarieties associated to the eigendata of $\mathfrak{E}_i=(\W_i,\CM_i,\pmb{T}_i,\psi_i),$ for $i=1,2$\/ with  $\W=\W_1=\W_2$ and\/ $\pmb{T}=\pmb{T}_1=\pmb{T}_2$.
	Let\/ $X \subset \W$ a  very Zariski dense subset such that \/ $\CM_i^{cl}$ is a classical structure on $X$ for each\/ $\CM_i$. Assume that, for all\/ $t \in \pmb{T}$ and all\/ $x \in X$, we have 
	$$\det \left(1-\psi_1(tU)Y\mid_{ \CM_{1,x}^{cl}} \right ) \text{ divides } \det\left (1-\psi_2(tU)Y\mid_{\CM_{2,x}^{cl}}\right)$$  in\/ $k(x)[Y]$, where\/ $k(x)$ is the residue field a\/t $x$. Then, there is a canonical closed immersion\/ $\iota: \CX_1^{red} \hookrightarrow \CX_2^{red}$ such that the following diagrams commute

	\begin{center}
		\begin{tikzcd}
		\CX_1^{red} \arrow[r, hook,"\iota"] \arrow[d] & \CX_2^{red} \arrow[dl] &  & 	\pmb{T}  \arrow[r, "\phi_1^{red}"]  \arrow[d,"\phi_2^{red}"] & \OOO(\CX_1^{red})   \\
		\W & && \OOO(\CX_2^{red}) \arrow[ur, "\iota^*"] & 
		\end{tikzcd}
	\end{center}

\end{thm}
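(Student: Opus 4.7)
The plan is to construct $\iota$ locally over an admissible cover of $\CX_1^{red}$ by slope-adapted pieces and then glue. Fix an admissible affinoid $V \subset \W$ and a slope bound $h$ for which both spectral varieties are slope-adapted over $V$. By Theorem \ref{th1} applied to each set of eigendata, $\CX_i^{red}|_{V,h}$ is identified with $\Sp(\pmb{T}_{i,V,h}^{red})$, where $\pmb{T}_{i,V,h} := \psi_i(\pmb{T}) \subset \End_{\OOO(V)}(M_{i,V,h})$ is a commutative finite $\OOO(V)$-algebra acting faithfully on the finite projective module $M_{i,V,h}$. Producing the desired closed immersion over $V$ is therefore equivalent to producing a surjective $\OOO(V)$-algebra map
\[
\rho_{V,h}:\pmb{T}_{2,V,h}^{red} \twoheadrightarrow \pmb{T}_{1,V,h}^{red},\qquad \psi_2(t)\longmapsto [\psi_1(t)],
\]
for $t\in\pmb{T}$. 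I define $\rho_{V,h}$ by this formula; surjectivity is then automatic, and the content is to verify well-definedness, i.e.\ that $\psi_2(t)=0$ in $\pmb{T}_{2,V,h}$ forces $\psi_1(t)$ to be nilpotent in $\pmb{T}_{1,V,h}$.

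For such a $t$, the operator $\psi_2(t)$ annihilates $M_{2,V,h}$, and hence every fiber $M_{2,x}^{\leq h}$ for $x \in V$; consequently every $\pmb{T}$-eigensystem $\chi$ occurring on $M_{2,x}^{\leq h}$ satisfies $\chi(t)=0$. Fix now a classical point $x \in X_h \cap V$. The hypothesis gives divisibility of the characteristic polynomials of $\psi_i(sU)$ on $M_{i,x}^{cl}$ for every $s\in \pmb{T}$; since $\pmb{T}$-eigensystems $\chi$ with $\chi(U) \neq 0$ are separated by the functions $s\mapsto \chi(sU)$, this divisibility forces an inclusion with multiplicities of the multiset of such eigensystems on $M_{1,x}^{cl}$ into that on $M_{2,x}^{cl}$. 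Taking slope-$\leq h$ parts, which on the finite-dimensional $M_{i,x}^{cl}$ are exactly the sums of generalized $\pmb{T}$-eigenspaces with $v_p(\chi(U))\leq h$, this restricts to an inclusion of eigensystem multisets on $M_{1,x}^{\leq h}$ and $M_{2,x}^{\leq h}$. Every eigensystem $\chi$ on $M_{1,x}^{\leq h}$ thus also occurs on $M_{2,x}^{\leq h}$, so $\chi(t)=0$, and consequently $\psi_1(t)$ is nilpotent on the fiber $M_{1,x}^{\leq h}$.

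By the definition of classical structure together with the very Zariski density of $X$, the set $X_h \cap V$ is Zariski dense in the reduced affinoid $V$. The characteristic polynomial of $\psi_1(t)$ on the finite projective $\OOO(V)$-module $M_{1,V,h}$ is an element of $\OOO(V)[Y]$ whose reduction at every $x \in X_h \cap V$ equals $Y^n$; by reducedness of $V$ its non-leading coefficients vanish identically, and Cayley--Hamilton gives $\psi_1(t)^n = 0$, so $\psi_1(t)$ is nilpotent in $\pmb{T}_{1,V,h}$. This completes the construction of $\rho_{V,h}$ and hence of the local closed immersion $\CX_1^{red}|_{V,h}\hookrightarrow \CX_2^{red}|_{V,h}$.

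The formula defining $\rho_{V,h}$ is manifestly natural in $(V,h)$ and compatible with the link maps of Definition \ref{eigdat}, so the local closed immersions glue to a single closed immersion $\iota: \CX_1^{red}\hookrightarrow \CX_2^{red}$. The two diagrams in the statement commute directly from the construction: the first because each $\rho_{V,h}$ is an $\OOO(V)$-algebra morphism, and the second because $\rho_{V,h}$ was defined by $\psi_2(t)\mapsto [\psi_1(t)]$. The main obstacle is the eigensystem-translation step in the second paragraph; its crux is that on the slope-$\leq h$ part only eigensystems with $v_p(\chi(U))\leq h<\infty$ appear, so that $\chi(U)\neq 0$ there and the functions $s\mapsto \chi(sU)$ suffice to separate them and thus to recover eigensystem multiplicities from the characteristic-polynomial divisibility on $M^{cl}$.
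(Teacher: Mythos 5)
The paper does not prove this theorem itself; it delegates entirely to Chenevier (\cite[Theorem 1]{chenjlc}), so there is no in-paper proof to compare against. Your argument is in substance a faithful reconstruction of Chenevier's proof: reduce to slope-adapted affinoids $(V,h)$ where $\CX_i^{red}|_{V,h}=\Sp(\pmb{T}_{i,V,h}^{red})$, construct the surjection $\pmb{T}_{2,V,h}^{red}\twoheadrightarrow\pmb{T}_{1,V,h}^{red}$ by $\psi_2(t)\mapsto[\psi_1(t)]$, verify well-definedness by comparing finite-slope eigensystems at the (Zariski-dense, by the classical-structure hypothesis) classical points, propagate nilpotence using reducedness of $\W$, and glue. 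The well-definedness reduction is also correctly handled: since the target ring is reduced, it suffices to show $\psi_2(t)=0$ implies $\psi_1(t)$ nilpotent rather than the a priori stronger ``nilpotent implies nilpotent.''

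Two points deserve a touch more care, though neither is a genuine gap. First, the step ``since $\pmb{T}$-eigensystems $\chi$ with $\chi(U)\neq 0$ are separated by the functions $s\mapsto\chi(sU)$, this divisibility forces an inclusion with multiplicities'' needs a separating element: one must find $t_0$ so that the finitely many finite-slope characters appearing take pairwise distinct values $\chi(t_0U)$, and then read off multiplicities from the factorization of both characteristic polynomials at $t=t_0$. This works because $k(x)$ is an infinite field, but the hypothesis only gives divisibility for $t\in\pmb{T}$ rather than $t\in\pmb{T}\otimes k(x)$; one either observes that the divisibility is a Zariski-closed condition in $t$ and $\pmb{T}$ is Zariski dense in $\pmb{T}\otimes k(x)$, or runs the separation argument more carefully over $\pmb{T}$ itself. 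Second, when you write $X_h\cap V$ you implicitly mean $X_h$ for the sheaf $\CM_1$ (the condition $M_{1,x}^{\leq h}\subset M_{1,x}^{cl}$); this is all you need, since the slope-$\leq h$ part of $M_{2,x}^{cl}$ always sits inside $M_{2,x}^{\leq h}$ regardless of whether $x$ lies in the analogous set for $\CM_2$, but it is worth saying explicitly. With these glosses your proof is complete and is the same argument as Chenevier's.
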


\begin{proof}See \cite[Theorem 1]{chenjlc}.
	
\end{proof}

	\begin{cor}\label{coro}
	If\/  $\det\left (1-\psi_1(tU)Y_{\mid \CM_{1,x}^{cl}} \right ) = \det \left (1-\psi_2(tU)Y_{\mid \CM_{2,x}^{cl}}\right )$  in\/ $k(x)[Y]$ for all\/ $t \in \pmb{T}$ and all\/ $x \in X$, then there is an isomorphism\/ $\CX_1^{red} \cong \CX_2^{red}$.
\end{cor}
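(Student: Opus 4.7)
The plan is to deduce this directly from Theorem \ref{int} by applying it in both directions. The hypothesis is an equality of characteristic polynomials on each classical fibre, which is symmetric in the indices $1$ and $2$; in particular each side divides the other in $k(x)[Y]$. Therefore the hypothesis of Theorem \ref{int} is satisfied with the roles of $(\CM_1^{cl},\psi_1)$ and $(\CM_2^{cl},\psi_2)$ interchanged, and we obtain two canonical closed immersions
\[
\iota_{12}:\CX_1^{red}\hookrightarrow\CX_2^{red},\qquad
\iota_{21}:\CX_2^{red}\hookrightarrow\CX_1^{red},
\]
each commuting with the structural maps to $\W$ and with the morphisms $\phi_i^{red}:\pmb{T}\to\OOO(\CX_i^{red})$.

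Next I would consider the composition $\iota_{21}\circ\iota_{12}:\CX_1^{red}\to\CX_1^{red}$. This is a morphism of reduced rigid spaces over $\W$ that intertwines the Hecke action, so it is a closed immersion of $\CX_1^{red}$ into itself fixing the structural map to weight space and the map from $\pmb{T}$. To conclude that this composition is the identity, I would exploit the very Zariski density of $X$ in $\W$ together with the fact that the fibre of $\CX_1^{red}$ over any classical $x\in X$ is identified canonically with the set of generalized $\pmb{T}$-eigensystems occurring in $\CM_{1,x}^{cl}$ (this is precisely what makes $\CM_1^{cl}$ a classical structure and is what Theorem \ref{th1} packages as the bijection on fibres). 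By the canonicity in Theorem \ref{int}, the closed immersion $\iota_{21}\circ\iota_{12}$ must agree on each such fibre with the identity, and since these fibres lie over a very Zariski dense set of $\W$ and $\CX_1^{red}$ is reduced, the two maps coincide globally. The symmetric argument shows $\iota_{12}\circ\iota_{21}=\mathrm{id}_{\CX_2^{red}}$, so $\iota_{12}$ is an isomorphism with inverse $\iota_{21}$.

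The only nontrivial step is the identification of $\iota_{21}\circ\iota_{12}$ with the identity on $\CX_1^{red}$; this is really a statement about uniqueness of the map produced by Theorem \ref{int}. Concretely, one can argue that Theorem \ref{int} applied to the trivial divisibility $\det(1-\psi_1(tU)Y|_{\CM_{1,x}^{cl}})\mid\det(1-\psi_1(tU)Y|_{\CM_{1,x}^{cl}})$ yields the identity as its canonical closed immersion $\CX_1^{red}\hookrightarrow\CX_1^{red}$; since the canonical closed immersion produced by the theorem depends only on the classical data and is unique, the composition $\iota_{21}\circ\iota_{12}$ must coincide with it. This formal uniqueness, rather than any further geometric input, is the crux of the corollary.
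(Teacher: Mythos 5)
Your proposal is correct and follows exactly the route the paper takes: apply Theorem \ref{int} in both directions to the symmetric hypothesis, obtaining two closed immersions, and conclude that they are mutually inverse. The paper's proof is extremely terse, simply asserting $\iota_{12}\iota_{21}=\mathrm{Id}$ and $\iota_{21}\iota_{12}=\mathrm{Id}$ with no justification; you supply the missing reasoning by appealing to the canonicity/uniqueness of the map produced by Theorem \ref{int}, which is a reasonable gloss (Chenevier's original theorem does come with a uniqueness clause even though the statement quoted here does not spell it out). One small remark: you can sidestep the uniqueness argument entirely by observing that each composition is a closed immersion of a reduced Noetherian rigid space into itself, hence automatically an isomorphism; this forces both $\iota_{12}$ and $\iota_{21}$ to be surjective closed immersions of reduced spaces, hence isomorphisms, without ever having to identify either composition with the identity.
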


\begin{proof}In this case the above Theorem gives us two closed immersions $\iota_{12}: \CX_1^{red} \hookrightarrow \CX_2^{red}$ and $\iota_{21}: \CX_2^{red} \hookrightarrow \CX_1^{red}$, from which the result follows at once by noting that $\iota_{12}\iota_{21}=Id_{\CX_2}$ and $\iota_{21}\iota_{12}=Id_{\CX_1}$. 
	
\end{proof}

We now have a result of Chenevier that gives a criterion for an eigenvariety to be reduced. Suppose that\/ $X \subset \W$ is a very Zariski dense subset  giving  a classical structure. For\/ $h \in \RR$, let $X_{h}^{ss} = \{ x \in X  \mid \CM^{cl}_x \cap \CM^{\leq h}_x \text{ is a semisimple } \pmb{T}\text{-module} \}.$

\begin{lem}\label{red}
	If for all\/ $h \in \RR$, $x \in X$ and $V \subset \W$  an open affinoid containing\/ $x$, there exists $W \subset V$ an open affinoid containing\/ $x$, such that\/ $X_{h}^{ss} \cap W$ contains an open Zariski dense subset of\/ $X \cap W$, then\/ $\CX$ is reduced (here we view\/ $X \cap W$ as a topological subspace of\/ $W$ with the Zariski topology).

\end{lem}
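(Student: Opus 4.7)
The eigenvariety $\CX$ admits an admissible cover by the slope-adapted pieces sitting over affinoids $V \subseteq \W$ at slope $h$; over each such piece the structure sheaf is $\pmb{T}_{V,h}$, the image of $\pmb{T}$ in $\End_{\OOO(V)}\!\left(\CM^{\le h}(V)\right)$, and the module $M := \CM^{\le h}(V)$ is finite projective over $\OOO(V)$. Since the nilradical of $\OOO(\CX)$ is local on this cover, it suffices to show each $\pmb{T}_{V,h}$ is reduced; passing to irreducible components, we may assume $V$ is irreducible. Let $\tau \in \pmb{T}_{V,h}$ be nilpotent, regarded as a nilpotent $\OOO(V)$-linear endomorphism of $M$; the goal is to prove $\tau = 0$.

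Fix any $x \in X \cap V$ and apply the hypothesis to obtain an open affinoid $W \subseteq V$ containing $x$, together with a subset of the form $U_{0} = X \cap W \cap U''$ (for some Zariski open $U'' \subseteq W$) which is Zariski dense in $X \cap W$ and satisfies $U_{0} \subseteq X_{h}^{ss}$. After replacing $W$ by an irreducible component through $x$ and shrinking $U''$, we may assume $W$ irreducible and $U'' \neq \emptyset$. By the very Zariski density of $X$, the set $X \cap U''$ is Zariski dense in $U''$; the classical-structure axiom applied to the affinoid $U''$ gives $\overline{X \cap U''} = \overline{X_{h} \cap U''}$ in $U''$, so $X_{h} \cap U''$ is Zariski dense in $U''$ as well.

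Now for every $y \in X_{h} \cap U''$ one has $\CM^{\le h}_{y} \subseteq \CM^{cl}_{y}$ (because $y \in X_{h}$) while simultaneously $y \in X \cap U'' \subseteq X_{h}^{ss}$, so $\CM^{\le h}_{y} = \CM^{\le h}_{y} \cap \CM^{cl}_{y}$ is a semisimple $\pmb{T}$-module. The image of $\pmb{T}$ in $\End_{k(y)}\!\left(\CM^{\le h}_{y}\right)$ is therefore a commutative semisimple $k(y)$-algebra, in which the nilpotent element induced by $\tau$ must vanish; equivalently, $\tau_{y} = 0$ on the fibre $M_{y}$. The closed subvariety $Z = \{\, y \in V : \tau_{y} = 0 \,\}$ (the vanishing locus of the entries of $\tau$ in any local trivialisation of $M$) therefore contains the Zariski dense subset $X_{h} \cap U''$ of $U''$, so $Z \supseteq U''$; since $V$ is irreducible and $U''$ is a nonempty Zariski open, $Z = V$. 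Fibrewise vanishing of an $\OOO(V)$-linear endomorphism of the locally free module $M$ over the reduced affinoid $V$ forces $\tau = 0$ in $\End_{\OOO(V)}(M)$, and a fortiori in $\pmb{T}_{V,h}$.

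The chief subtlety is that the hypothesis only provides density in $X_{h}^{ss}$, whereas the semisimplicity argument needs a point of $X_{h} \cap X_{h}^{ss}$, so that $\CM^{cl}_{y} \cap \CM^{\le h}_{y}$ actually equals the full slope fibre $\CM^{\le h}_{y}$. Reconciling these sets is precisely where the very Zariski density of $X$ and the equality of Zariski closures built into the classical-structure axiom are both used in tandem.
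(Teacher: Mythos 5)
The paper gives no proof of this lemma; it simply cites \cite[Proposition 3.9]{chenjlc}, so you are being compared against Chenevier's own argument. Your reconstruction follows that argument faithfully and correctly: reduce to showing the local Hecke algebras $\pmb{T}_{V,h} \subseteq \End_{\OOO(V)}\bigl(\CM^{\le h}(V)\bigr)$ over slope-adapted affinoids are reduced, take a nilpotent $\tau$, show its fibre $\tau_y$ vanishes on a Zariski dense set of $y$ where the $\pmb{T}$-action on $\CM^{\le h}_y$ is semisimple, and then conclude $\tau = 0$ using that $\OOO(V)$ is reduced and $M$ is finite projective. The crux you correctly single out is reconciling the two density statements: the hypothesis produces density of $X_h^{ss}$, but semisimplicity of the \emph{full} slope-$\le h$ fibre requires a point of $X_h \cap X_h^{ss}$, and you get such points by playing the hypothesis off against the classical-structure axiom. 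This is exactly Chenevier's route.

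Two small imprecisions worth tightening. First, the classical-structure axiom is stated for \emph{open affinoids}, whereas $U''$ is only a Zariski open; apply the axiom to $W$ itself to get $\overline{X_h \cap W} = \overline{X \cap W} = W$ (the last equality by very Zariski density and irreducibility of $W$), and only then intersect with $U''$, using that a Zariski dense subset of $W$ meets, and remains dense in, every nonempty Zariski open. Second, the reduction ``replace $W$ by an irreducible component through $x$'' tacitly uses that such a component is again an admissible open affinoid neighbourhood of $x$ in $\W$ — this holds here because $\W$ is smooth (a disjoint union of polydisks), so connected components of affinoid subdomains are open and coincide with irreducible components; it is worth saying this, since for an arbitrary rigid space irreducible components of an affinoid are only closed. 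Neither point affects the correctness of the argument for the weight spaces at hand.
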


\begin{proof} See \cite[Proposition 3.9]{chenjlc}.
\end{proof}

\end{subsection}

\end{section}

\begin{section}{\textbf{ Eigenvariety of Hilbert modular cusp forms}}
\setcounter{subsection}{1}
In this section  we define the spaces of Hilbert modular forms, in a way which generalizes to give the spaces of overconvergent Hilbert modular cusp forms. We will only discuss the classical spaces here and give some indication of what is needed to extend this to the overconvergent setting as is done in \cite{AIP}. Throughout this section our chosen prime $p$  may be ramified unless otherwise stated.

To define the spaces of Hilbert modular forms for $G$, we first work with the group $G^*=G \times_{\Res_{F/\QQ} \mathbb{G}_m}\mathbb{G}_m$ and define the spaces of modular forms for $G^*$. Then using a projector one gets the definition for $G$. The reason for working with $G^*$ is that the relevant moduli problem associated to $G^*$ is representable while the one for $G$ is not.

 Following Hida \cite{hidapadic}, we consider the fibered category $\A_F$ of abelian schemes over the category of schemes.

\begin{defn}
 An object of $\A_F$ is a triple $(A,\l,\iota)$ where: 

\begin{enumerate}[$(1)$]
\item $A$ is an abelian scheme $A \to S$ of relative dimension $g$ over $S$.
\item $\OO_F$-multiplication given by an embedding $\iota:\OO_F \hookrightarrow \End_S(A)$.
\item  Let $\gothc$ be a fractional ideal of $F$ and $\gothc^+$ its cone of totally positive elements. If $P \in \Hom_{\OO_F}(A,A^{\vee})$ is a sheaf for the \'{e}tale topology on $S$ of symmetric $\OO_F$-linear homomorphisms from $A$ to the dual abelian scheme $A^{\vee}$, and if $P^{+} \subset P$ is the subset of polarizations, then \[\l:(P,P^{+}) \lra (\gothc,\gothc^{+})\] is an isomorphism of \'{e}tale sheaves, as invertible $\OO_F$-modules with a notion of positivity. The triple $(A,A^{\vee},\gothc)$ is subject to the Deligne-Pappas conditions, which means the map $A \otimes_{\OO_F} \gothc \to A^{\vee}$ is an isomorphism of abelian schemes.
\end{enumerate}
See \cite[Section 4.1]{hidapadic} and \cite{AIP} for more details.

\end{defn}

We now state the moduli problem for $G^*$ associated to the $\mu_\nn$-level structure on an abelian variety.

 \begin{defn}\label{modprob}
 	
 	Let  $(A,\iota, \l)$ in $\A_F$ (so $\l$ is a $\gothc$-polarization). Let $\nn$ be a non-zero ideal and let $\mu_\nn$ denote the locally free group scheme of finite rank given by $\mu_\nn(R)=\{x \in \mathbb{G}_m(R) \otimes_{\ZZ} \d_F^{-1} | \nn x=0\}$. Let $\nn \cap \ZZ=(N)$ and let $M(\gothc, \mu_\nn)$ be the Hilbert moduli scheme representing the functor $\E_{\mu_\nn}: \Sch/_{\ZZ[1/N]}  \to \Set$ where $\E_{\mu_\nn}(S)$ is the set of isomorphism classes of $(A_{/S},\iota,\l,\Phi_\nn)$. Here  $\Phi_\nn:\mu_\nn  \hookrightarrow A[N]_{/S}$ is a closed immersion compatible with $\OO_F$-actions. We call such a $\Phi_\nn$ a $\mu_\nn$-{\it level structure} on $A$.\footnote{If we take $\mu_{(N)}$ with $N \geq 3$, then the associated moduli problem is representable by a scheme $M(\gothc,\mu_{(N)})$ (cf. \cite[Chapter 3, Theorem 6.9]{gor}).}
 	
 \end{defn}

\begin{rmrk}
	For a fractional ideal $\gothb \in \OO_F$ let $\gothb^*=\gothb^{-1}\d_F^{-1}$. Let\/ 
	\begin{align*}
	&\Gamma_0(\gothb,\nn)=	
\GL_2(F)^+ \cap   \left ( \begin{smallmatrix}
\OO_F &  \gothb^{*} \\ \gothb\nn\d_F &\OO_F 
\end{smallmatrix}\right )\\
&\Gamma_1(\gothb,\nn) =\left\{   \left ( \begin{smallmatrix}
a & b \\ c &d 
\end{smallmatrix}\right ) \in \Gamma_0(\gothb,\nn) \mid d \equiv 1 \mod \nn \right\}\\
&\Gamma_1^1(\gothb,\nn)=\Gamma_1^1(\gothb,\nn) \cap \SL_2(F).
	\end{align*}	
Then one can show that, for $\nn$ with $(N) =\nn \cap \ZZ$ such that $N \geq 3$ ,    $M(\gothc,\mu_\nn)(\CC)=\Gamma_1^1(\gothc,\nn) \backslash \HHH^g$.
	
\end{rmrk}

\begin{nota}
	Let $M(\gothc, \mu_\nn)$ denote the scheme representing the corresponding moduli problem (for $G^*$).  Denote by $\ov{M}(\gothc, \mu_\nn)$ (resp. $\ov{M}^*(\gothc, \mu_\nn)$) {\it a} fixed {\it toroidal}  (resp. {\it the minimal}) compactification of $M(\gothc, \mu_\nn)$. 

\end{nota}

\begin{defn}
	Let $\pmb{A}$ be a versal semiabelian scheme over  $\ov{M}(\gothc, \mu_\nn)$ , with real multiplication by $\OO_F$. Define  $\w_{\pmb{A}}$ to be the conormal sheaf to the identity of $\pmb{A}$.
\end{defn}

There exist a greatest open subscheme  $\ov{M}^{R}(\gothc,\mu_\nn) \subset  \ov{M}(\gothc,\mu_\nn)$ such that $\w_{\pmb{A}}$ is an invertible $\OO_{ \ov{M}^{R}(\gothc,\mu_\nn)} \otimes_{\ZZ} \OO_F$-module. We will use $\w_{\pmb{A}}$ to define {\it another} invertible sheaf whose sections will be our  Hilbert modular forms. But in order to do so, we first need to define the weight space for  $G^*$ and showing how it is related to $\W^G$.

\begin{defn}\label{gstar}
The weight space $\W^{G^*}$ for $G^*$ is defined by setting $\W^{G^*}$ to be the rigid analytic space over $L$ associated to $\OO_L\llbracket \TT(\ZZ_p) \rrbracket$, where $\TT$ and $L$ are as before. We will denote such weights by $\k$ or $k$.

\end{defn}

There is a canonical map $\W^G \to \W^{G^*}$ induced from \ref{2.3.1}. 	We define a classical algebraic weight for $G^*$ as a map from $\TT(\ZZ_p)$ to $\CC_p$ defined by an element $k \in \ZZ^g_{\geq 0}$, as usual.

We now construct an invertible sheaf associated to classical weights in $\W^{G^{*}}$, from which we can then define the spaces of Hilbert modular forms for $G^{*}$:
\begin{defn}
Recall that $L$ is a splitting field for $F$, and  let $k \in \ZZ^{\Sigma_\infty}$ be a classical weight  for $G^*$. Then, define the invertible modular sheaf $$\Omega^{k}:=\bigotimes_{\v \in \Sigma_\infty} \w_{\pmb{A},\v}^{\otimes k_{\v}},$$ where $\w_{\pmb{A},\v}:=\w_{\pmb{A}} \otimes_\v \OO_L$ and here $\otimes_\v$ denotes the tensor over $\OO_L \otimes \OO_F$ via $1 \otimes \iota_\v$. 

\end{defn}

\begin{defn}
	The $L$-vector space of $\gothc$-polarized, tame level $\Gamma_1^1(\gothc,\nn)$ and weight $k$ {\it Hilbert modular forms for $G^*$} is defined by
$$M_k(\Gamma_1^1(\gothc,\nn)):=H^{0}( \ov{M}^{R}(\gothc, \mu_\nn),\Omega^{k}).$$ 
The subspace of cusp forms is defined by $$S_{k}(\Gamma_1^1(\gothc,\nn)):=H^0(\ov{M}^R(\gothc, \mu_\nn),\Omega^{k}(-B)),$$
where $B:=\ov{M}(\gothc, \mu_\nn) \backslash M(\gothc, \mu_\nn)$ is the boundary divisor in the toroidal compactification.  
\end{defn}

To define the spaces of  Hilbert modular forms associated to $G$, we need to introduce a certain projector. First, we note that multiplication by $\e \in \OO_F^{\times,+}$ gives an isomorphism $(A,\iota,\l,\Phi) \cong (A,\iota,\e^2 \l,\e \Phi)$. Now, let  $\mathfrak{S}_\nn$ be the elements of $\OO_F^{\times,+}$ congruent to $1$ modulo $\nn$.   Define an action of $\mathfrak{O}:=\OO_F^{\times,+}/\mathfrak{S}_{\nn}^2$ on $M(\gothc, \mu_\nn)$, by $\e \cdot (A,\iota,\l,\Phi):=(A,\iota,\e\l,\Phi).$ Since multiplication by $\e$ gives an isomorphism $\e:A \to A$ such that $\e^*\l=\e^2\l$, it  follows that, if $\e=\eta^2 \in \mathfrak{S}_{\nn}$, then $\e$ acts trivially on $(A,\iota,\l,\Phi)$; hence the action factors through $\mathfrak{O}$ as required.

\begin{defn}
	Let $(k,r,n,v,w)$ be a weight tuple with $(k,r)$ a classical algebraic weight (for $G$). We define an action of $\mathfrak{O}$ on $\Omega^{k}$ by sending a local section $f$ of $\Omega^{k}$ on $M^R(\gothc, \mu_\nn)$ to $$(\e\cdot f):(A,\iota,\l,\Phi, \beta) \to w(\e)f(A,\iota,\e^{-1}\l,\Phi,\beta) $$ where $\e \in \OO_F^{\times,+}$ and $\beta$ is a local generator for $\w_{\pmb{A}}$ as a $\OO_{M^R(\gothc, \mu_\nn)} \otimes \OO_F$-module. If $\e=\eta^2 \in \mathfrak{S}_\nn$, then this acts trivially. Hence the action factors through $\mathfrak{O}$. With this we define a {\it projector} $\mathfrak{e}_{k,r}:M_{k}(\Gamma_1^1(\gothc,\nn)) \to M_{k}(\Gamma_1^1(\gothc,\nn))$ by $$\mathfrak{e}_{k,r}:=\frac{1}{\mid\mathfrak{O}\mid} \sum_{\e \in \mathfrak{O}} \e.$$ 
\end{defn}

\begin{defn}  The $L$-vector space of {\it classical Hilbert modular forms for $G$ of level $\Gamma_1(\gothc,\nn)$,  and weight $(k,r)$} is defined to be the image of $\mathfrak{e}_{k,r}$ and is denoted $M_{k,r}^G(\Gamma_1(\gothc,\nn))$. Similarly, we let $S_{k,r}^G(\Gamma_1(\gothc,\nn))$ be the image of $S_{k}(\Gamma_1^1(\gothc,\nn))$ under $\mathfrak{e}_{k,r}$. 
\end{defn}

\begin{rmrk}\label{classicalex}
	We note here that there are other ways of defining Hilbert modular forms for $G$ as sections of a sheaf $\Omega^{(k,r)}$ on $M^G(\gothc,\mu_\nn)$ (cf. \cite[Section 2.2]{tian}). Working over $\CC$ one then recovers the spaces $S_{k,r}(\Gamma_1(\gothc,\nn))$ as defined above. To see the relationship with our definition one observes that there is a morphism $m: M^R(\gothc,\mu_\nn) \to M^G(\gothc,\mu_\nn)$ which is finite and Galois, with Galois group $\mathfrak{D}$ such that $\Omega^{k,r}=(m_*(\Omega^{k}))^\mathfrak{D}$ (cf. \cite[Section 1]{AIP}).
\end{rmrk}

Note that these spaces will not be fixed by the Hecke operators. In fact, note that  $F^{\times,+}$ acts on the pairs $(\gothc,\gothc^+)$ by $\e(\gothc,\gothc^+)=(\e\gothc,\e\gothc^+)$, which induces an isomorphism $\a_\e: M_{k,r}^G(\Gamma_1(\gothc,\nn)) \to M_{k,r}^G(\Gamma_1(\e \gothc,\nn))$. Moreover, if $\e \in \OO_F^{\times,+}$, then $\a_\e(f)=f$ for all $f \in M_{k,r}^G(\Gamma_1(\gothc,\nn))$. To fix this, we must work adelically. Let $U_1^G(\nn)= \left \{\g \in G(\widehat{\ZZ}) \mid \g \equiv \left ( \begin{smallmatrix} *&*\\ 0&1  \end{smallmatrix} \right) \mod \nn \right \}$. We can now define the space of Hilbert modular forms for $G$.

\begin{defn}\label{HMFSdefn}	
	We define the space of {\it classical Hilbert modular forms for $G$ of level $U_1(\nn)$ and weight $(k,r)$} denoted $M_{k,r}^G(U_1^G(\nn))$ as $V/I$ where $$V:=\underset{(\gothc,\gothc^+)}\bigoplus M_{k,r}^G(\Gamma_1(\gothc,\nn))$$ and $I=(f-\a_\e(f))_{\e \in \left ( F^{\times,+}/\OO_F^{\times,+}\right )}$. We define $S_{k,r}^G(U_1^G(\nn))$ similarly. 
\end{defn}

One can define Hecke operators on these spaces, which satisfy the all the usual properties. We denote by $\pmb{T}(U)$ the Hecke algebra consisting of Hecke operators away from the level together with $U_{\ps}$ for $\ps\in  \Sigma_p$. Moreover, when we come to the Jacquet-Langlands correspondence, we will assume that $\pmb{T}(U)$ does not contain Hecke operators at the finite places dividing the discriminant of the relevant quaternion algebra (if there are any).

\subsection{\textbf{ Overconvergent spaces}}
Our goal is now to associate to each weight $(\k,r)$ or family of weights $(\k^{\V},r^\V)$, a space of overconvergent Hilbert modular forms. There are several constructions of these spaces but we will be interested in the construction given by \cite{AIP}. In this case, one defines an overconvergent sheaf which interpolates the sheaf $\Omega^{k}$. Using this, one can then define the spaces of overconvergent Hilbert modular forms for $G^*$ and then, using a projector, define the spaces for $G$. The construction of the overconvergent sheaf can be found in \cite{AIP,AIP4,AIS2,hattori}, so we only give some of its properties.

Let $f$ be the number of primes above $p$ in $F$ and let $t_m \in \QQ^{f}$ be a multi-index with $0< t_i \leq \frac{1}{p^m}$ for $m \geq 1$. Let $\ov{\MM}(\gothc, \mu_\nn)$ and $\ov{\MM}^*(\gothc, \mu_\nn)$ denote the formal completions of $\ov{M}(\gothc, \mu_\nn),\ov{M}^*(\gothc, \mu_\nn)$  along their special fibres. Now, let $\ov{\M}(\gothc, \mu_\nn), \ov{\M}^*(\gothc, \mu_\nn)$ denote the rigid fibres of $\ov{\MM}(\gothc, \mu_\nn), \ov{\MM}^*(\gothc, \mu_\nn)$ respectively  and let $\ov{\M}(\gothc, \mu_\nn,t_m), \ov{\M}^*(\gothc, \mu_\nn,t_m)$ denote the neighbourhoods of the respective ordinary locus defined by the condition that ${\val}_p(h_{\gothp_i}) \cap [0,1] \leq t_i$ (this is the truncated valuation), where $h_{\gothp_i}$ are the {\it partial Hasse invariants} as defined in \cite[3.2.1]{AIP}.

Before continuing let us recall what it means for a weight to be $m$-analytic, as in \cite{AIP}:
\begin{defn}\label{ann}
	Let $\wh{\TT}$ be the formal group obtained by completing $\TT$ along the unit section and let $\TT_s^0$ be the formal subgroup of $\wh{\TT}$ of elements which are congruent to $1 \mod p^s$. We say a weight $\k \in \W^{G^*}$ is $m$-analytic for some $m \in \ZZ_{\geq 0}$ if its restriction to $\TT_m^0(\ZZ_p)$ factors as $\exp \circ \psi \circ \log$ where $\psi$ is some $\ZZ_p$-linear map and $\exp,\log$ are the $p$-adic exponential and $p$-adic logarithm. We say a weight tuple is $m$-analytic if it corresponds to an $m$-analytic weight under the map $\W^G \to \W^{G^*}$ from \ref{gstar}.
\end{defn}

Now, the overconvergent sheaves are defined over formal models of $\ov{\M}(\gothc, \mu_\nn,t_m)$ and $\ov{\M}^*(\gothc, \mu_\nn,t_m)$, which are obtained as follows: let $\ov{\MM}(\gothc, \mu_\nn,t_m))$ (resp. $\ov{\MM}^*(\gothc, \mu_\nn,t_m))$ be the normalization (in $\ov{\M}(\gothc, \mu_\nn,t_m)$ (resp. $\ov{\M}^*(\gothc, \mu_\nn,t_m)$)  of the formal model of $\ov{\M}(\gothc, \mu_\nn,t_m)$ (resp. $\ov{\M}^*(\gothc, \mu_\nn,t_m)$ ) given by taking iterated blow-ups along the ideals $(h_{\gothp_i},p^{t_i})$ of $\ov{\MM}(\gothc, \mu_\nn)$ (resp. $\ov{\MM}^*(\gothc, \mu_\nn)$ ) and removing all divisors at infinity. Then on  $\ov{\MM}(\gothc, \mu_\nn,t_m))$ we can construct the following sheaves:

\begin{thm}[Andreatta--Iovita--Pilloni]For every\/ $m$-analytic weight\/ $\k \in \W^{G^*}(L)$ there exists a coherent sheaf\/ $\Omega^{\dagger,\k}$ of\/ $\OO_{\ov{\mathfrak{M}}(\gothc, \mu_\nn,t_m)}$-modules whose restriction to the rigid analytic fibre\/ $\ov{\mathcal{M}}(\gothc, \mu_\nn,t_m)$ is invertible. 
	
	More generally, to each affinoid\/ $\V$ with a morphism\/ $\V \to \W^{G^*}$ and\/ $m$ such that\/ $\k^{\V}$ is locally\/ $m$-analytic, one can attach a coherent sheaf\/ $\Omega^{\dagger,\k^{\V}}$ of\/ $\OO_{\ov{\mathfrak{M}}(\gothc, \mu_\nn,t_m) \times \hat{\V}}$-modules where\/ $\hat{\V}=\Spf(A)$ is the formal model of\/ $\V$, where\/ $A$ consists of power bounded elements of $\V$. Moreover, the restriction of\/ $\Omega^{\dagger,\k^{\V}}$  to the rigid fibre is invertible. Lastly, if\/ $k$ is a classical weight (for $G^*$), then\/ $\Omega^{\dagger,k}$ agrees on\/ $\ov{\mathcal{M}}(\gothc, \mu_\nn,t_m)$ with the classical\/ $\Omega^{r}$.
	
\end{thm}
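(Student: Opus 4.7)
The plan is to cite and adapt the construction of Andreatta--Iovita--Pilloni in \cite{AIP,AIP4}, where the essentially same theorem is proven for Hilbert modular varieties. Since the statement is geometric and local on the base, the construction is compatible with the minor bookkeeping of the level structure and polarization module $\gothc$ that enters our set-up. I will sketch the structural steps so that it is clear what is being invoked.

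First, on the strict neighbourhood $\ov{\mathfrak{M}}(\gothc,\mu_\nn,t_m)$ cut out by $\val_p(h_{\gothp_i})\leq t_i$, one constructs a \emph{canonical subgroup} $H_m\subset \pmb{A}[p^m]$ stable under the $\OO_F$-action, provided the $t_i$ are small enough relative to $m$ (this is where the constraint $0<t_i\leq 1/p^m$ comes in). The Hodge--Tate morphism $H_m^{D}\to \w_{\pmb{A}}/p^{s_m}\w_{\pmb{A}}$ then exhibits $\w_{\pmb{A}}$ modulo an explicit power of $p$ as locally generated (as $\OO\otimes\OO_F$-module) by the image of a basis of $H_m^D$. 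Next, over the Igusa tower trivialising $H_m^D$ as an $\OO_F/p^m$-module, one forms a torsor $\mathfrak{T}_m\to \ov{\mathfrak{M}}(\gothc,\mu_\nn,t_m)$ for the formal subgroup $\TT^0_m\subset\widehat{\TT}$ parametrising the generators of $\w_{\pmb{A}}$ (as a $\OO_F$-module) that lift the Hodge--Tate image.

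Given an $m$-analytic weight $\k\in\W^{G^*}(L)$, one defines
\[
\Omega^{\dagger,\k} \;:=\; \bigl(\pi_{m,*}\OO_{\mathfrak{T}_m}\bigr)^{\TT^0_m=\k},
\]
i.e.\ the subsheaf of sections transforming under $\TT^0_m$ by the character $\k$. The $m$-analyticity hypothesis is exactly what is needed so that this subsheaf is nontrivial and coherent, and a standard argument (independence of $m$ up to refining $t_m$) shows the resulting sheaf is well-defined. Invertibility on the rigid fibre $\ov{\M}(\gothc,\mu_\nn,t_m)$ follows from the fact that, after inverting $p$, the Hodge--Tate map becomes an isomorphism and the torsor $\mathfrak{T}_m^{\mathrm{rig}}$ is an honest $\TT^0_m$-torsor; the relative families version for $\V\to\W^{G^*}$ is obtained by repeating the construction with $\kappa$ replaced by the universal character $\k^\V$, using that on $\hat\V=\Spf(A)$ the restriction of $\k^\V$ to $\TT_m^0(\ZZ_p)$ still factors through $\exp\circ\psi\circ\log$ for an $A$-linear $\psi$. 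Finally, if $k\in\ZZ^{\Sigma_\infty}$ is a classical algebraic weight, then $\k=k$ is analytic on all of $\widehat{\TT}$, the torsor $\mathfrak{T}_m$ is the pullback of the full $\widehat{\TT}$-torsor of trivialisations of $\w_{\pmb{A}}$, and the defining formula recovers $\bigotimes_\v \w_{\pmb{A},\v}^{\otimes k_\v}=\Omega^{k}$.

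The main technical obstacle, and the reason one cannot simply quote the statement verbatim from \cite{AIP}, is checking that the construction of the canonical subgroup, the Igusa tower and the torsor $\mathfrak{T}_m$ are all compatible with the polarization datum $(\gothc,\gothc^+)$ and the $\mu_\nn$-level structure used here, as well as with the choice of toroidal compactification (so that sections extend across the boundary after normalisation). All of this is local on $\ov{\mathfrak{M}}(\gothc,\mu_\nn)$ near the ordinary locus, away from the cusps, and then extended by normalisation, so it reduces to the analysis already carried out in \cite{AIP,AIP4,hattori}; I would make the necessary citations explicit for each step rather than reproduce the computations.
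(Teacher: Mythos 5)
Your proposal is correct and follows the same route as the paper, which simply cites \cite[Sections 3.4--3.5]{AIP} for this theorem; you have supplied a faithful sketch of the Andreatta--Iovita--Pilloni construction (canonical subgroup, Hodge--Tate map, Igusa tower, $\TT^0_m$-torsor $\mathfrak{T}_m$, and the $\k$-isotypic piece of $\pi_{m,*}\OO_{\mathfrak{T}_m}$) together with the compatibility checks the paper leaves implicit. One small note: the theorem statement's ``$\Omega^{r}$'' in the last sentence is evidently a typo for $\Omega^{k}$, and your identification $\Omega^{\dagger,k}\cong\bigotimes_\vv \w_{\pmb{A},\vv}^{\otimes k_\vv}=\Omega^k$ on the rigid fibre is the intended one.
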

\begin{proof}
	See  \cite[Section 3.4-3.5]{AIP}.
\end{proof}

\begin{rmrk}
	In general, the $\Omega^{\dagger,\k^{\V}}$ and $\Omega^{\dagger,\k}$  depend on $m$, but when restricted to the rigid fibres, they are independent of $m$, for this reason we have suppressed the dependence on $m$. See \cite[Proposition 3.9 and Proposition 3.13]{AIP}.
\end{rmrk}

Using these sheaves, one can then define the spaces of $t_m$-overconvergent cuspidal Hilbert modular forms for $G^*$ of weight $\k^\V$ by setting $$S_{\V}^{\dagger}(\Gamma_1^1(\gothc,\nn), t_m)=\hH^0(\ov{\M}(\gothc, \mu_\nn,t_m)\times \V, \Omega^{\dagger,\k^{\V}}(-B))$$ where $B$ is again the boundary divisor. From this, one then uses a projector $\mathfrak{e}_{\k^\V,r^\V}$ to define families of $t_m$-overconvergent cuspidal Hilbert modular forms for $G$ of weight $(\k^\V,r^\V)$ denoted $S_{\V}^{G,\dagger}(\Gamma_1(\gothc,\nn), t_m)$. Moreover, taking $\V=\Spf(L)$ gives $S_{\k,r}^{G,\dagger}(\Gamma_1(\gothc,\nn),t_m)$.

\begin{thm}
Let\/ $\V$ be an admissible open affinoid of\/ $\W^G$ and\/ $(\k^\V,r^\V)$ the restriction of the universal character to $\V$. Let\/ $A$ be the algebra of power bounded elements of $\V$. Then for an appropriate\footnote{This means such that\/ $\k^\V$ is\/ $m$-analytic (see Definition \ref{ann}).} choice of\/ $m$ and\/ $t_m$ the spaces\/ $S_{\V}^{G,\dagger}(\Gamma_1(\gothc,\nn),t_m)$  are  Banach\/ $(A \otimes_{\OO_L} L)$-modules satisfying $(Pr)$. Moreover, for any weight\/ $(\k,r) \in \V(L)$ there is a natural specialization map $$S_{\V}^{G,\dagger}(\Gamma_1(\gothc,\nn),t_m) \lra  S_{\k,r}^{G,\dagger}(\Gamma_1(\gothc,\nn),t_m)$$ which is surjective.

\end{thm}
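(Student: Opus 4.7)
The plan is to bootstrap the statement from the analogous result for $G^*$, which is already contained in \cite{AIP}, and then descend to $G$ using the projector $\mathfrak{e}_{\k^{\V},r^{\V}}$.

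First I would verify the theorem at the level of $G^{*}$. The formal scheme $\ov{\MM}(\gothc,\mu_{\nn},t_{m})$ admits a finite affine cover, and because the rigid fibre of $\Omega^{\dagger,\k^{\V}}$ is an invertible $\OO_{\ov{\M}(\gothc,\mu_{\nn},t_{m})\times\V}$-module, its global sections over this cover yield a \v{C}ech complex of Banach $A\otimes_{\OO_{L}} L$-modules whose terms are potentially orthonormalizable (each being of the form $A\otimes_{\OO_L} L \,\widehat{\otimes}\, B$ for some orthonormalizable Banach $L$-space $B$). Taking $H^{0}$ with cuspidal boundary condition gives a closed submodule, and by the $A$-flatness results of \cite[Prop.~3.13, Cor.~3.16]{AIP}, the quotient is also potentially ON-able, so $S^{\dagger}_{\V}(\Gamma_{1}^{1}(\gothc,\nn),t_{m})$ satisfies $(Pr)$. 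Surjectivity of the specialization map in the $G^{*}$ case is \cite[Thm.~3.17]{AIP} (compatibility of $\Omega^{\dagger,\k^{\V}}$ with base change on $\V$, which ultimately rests on the $A$-flatness of the sheaf).

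Next I would pass to $G$. The projector $\mathfrak{e}_{\k^{\V},r^{\V}}$ is defined as a normalized average over the finite group $\mathfrak{O}=\OO_{F}^{\times,+}/\mathfrak{S}_{\nn}^{2}$; since $|\mathfrak{O}|$ is a nonzero integer invertible in $L$, this average gives a continuous $A\otimes_{\OO_L} L$-linear idempotent on $S^{\dagger}_{\V}(\Gamma_{1}^{1}(\gothc,\nn),t_{m})$ that is built from the action of $\mathfrak{O}$ on the moduli datum (trivially extended along $\V$) and the character $w$ of the weight tuple. Because $\mathfrak{e}_{\k^{\V},r^{\V}}$ is an idempotent, its image $S^{G,\dagger}_{\V}(\Gamma_{1}(\gothc,\nn),t_{m})$ is a direct summand, and a direct summand of a module satisfying $(Pr)$ also satisfies $(Pr)$, giving the first assertion.

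Finally, for the surjectivity of specialization in the $G$ case, the key point is that the construction of the projector is compatible with base change $A\to L$ along the weight map $\V(L)\ni(\k,r)$: the action of $\mathfrak{O}$ on $\Omega^{\dagger,\k^{\V}}$ specializes to its action on $\Omega^{\dagger,\k}$, and the character $w^{\V}$ specializes to $w$. Thus the following square commutes,
\[
\xymatrix{
S^{\dagger}_{\V}(\Gamma_{1}^{1}(\gothc,\nn),t_{m}) \ar@{->>}[r] \ar[d]_{\mathfrak{e}_{\k^{\V},r^{\V}}} & S^{\dagger}_{\k}(\Gamma_{1}^{1}(\gothc,\nn),t_{m}) \ar[d]^{\mathfrak{e}_{\k,r}} \\
S^{G,\dagger}_{\V}(\Gamma_{1}(\gothc,\nn),t_{m}) \ar[r] & S^{G,\dagger}_{\k,r}(\Gamma_{1}(\gothc,\nn),t_{m})
}
\]
with surjective top row; surjectivity of the bottom row then follows from surjectivity of the right vertical projector. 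The main technical obstacle I anticipate is verifying that the projector really descends to a continuous map in families for arbitrary $\V$ (and in particular that the $\mathfrak{O}$-action on the family of overconvergent sheaves $\Omega^{\dagger,\k^{\V}}$ is well-defined and continuous), which requires tracing through the definition of $\Omega^{\dagger,\k^{\V}}$ in \cite{AIP} to check $\mathfrak{O}$-equivariance; however this is essentially formal, as the $\mathfrak{O}$-action is defined at the level of the moduli problem and hence acts functorially on the sheaves built from its universal objects.
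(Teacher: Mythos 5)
The paper's proof is simply a citation to \cite[Theorem 4.4]{AIP}, and your proposal is a faithful reconstruction of the argument behind that theorem: establish $(Pr)$ and surjectivity of specialization at the level of $G^*$ using the local structure of $\Omega^{\dagger,\k^{\V}}$ and its base-change compatibility, then descend to $G$ by observing that the finite-group projector $\mathfrak{e}_{\k^{\V},r^{\V}}$ exhibits the $G$-spaces as direct summands, and that direct summands inherit $(Pr)$ and surjectivity. This is essentially the same approach as the source the paper relies on.
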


\begin{proof}
This is \cite[Theorem 4.4]{AIP}.
\end{proof}

Now as before these spaces have an action of $F^{\times,+}$ so they will not be fixed under the action of Hecke operators. In particular, we have:

\begin{lem}Let\/ $\e \in F^{\times,+}$ and assume that\/ $\e$ is also a\/ $p$-adic unit. Then there is a canonical isomorphism $$\mathscr{L}_\e: S_\V^{G,\dagger}(\Gamma_1(\gothc,\nn),\un{t}) \lra S_\V^{G,\dagger}(\Gamma_1(\e \gothc,\nn),\un{t})$$ which only depends on \/ $\e$ modulo totally positive units.
	
\end{lem}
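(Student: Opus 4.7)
The plan is to construct $\mathscr{L}_\e$ geometrically by transporting the entire overconvergent setup along the isomorphism induced by multiplying the polarization by $\e$, generalizing the classical construction that produces $\a_\e$ on the spaces $M_{k,r}^G(\Gamma_1(\gothc,\nn))$. First I would observe that, since $\e\in F^{\times,+}$, the assignment $(A,\iota,\l,\Phi)\mapsto (A,\iota,\e\l,\Phi)$ respects the positivity cones and therefore defines an isomorphism of moduli problems $\E_{\mu_\nn,\gothc}\xrightarrow{\sim}\E_{\mu_\nn,\e\gothc}$; this yields an isomorphism of schemes $\varphi_\e:M(\gothc,\mu_\nn)\xrightarrow{\sim}M(\e\gothc,\mu_\nn)$ which extends compatibly to the toroidal and minimal compactifications and preserves the open subscheme where $\w_{\pmb{A}}$ is locally free as an $\OO\otimes\OO_F$-module, since $A$, $\iota$ and $\Phi$ are untouched.

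Next I would transport the construction to the $p$-adic setting. The partial Hasse invariants $h_{\gothp_i}$ depend only on the pair $(A,\iota)$ and not on the polarization, so $\varphi_\e$ preserves the truncated-valuation conditions that define the strict neighborhoods $\ov{\M}(\gothc,\mu_\nn,t_m)$. The assumption that $\e$ is a $p$-adic unit ensures that $\e$ remains invertible on the formal completions at $p$, which allows $\varphi_\e$ to lift to an isomorphism of the blow-up models $\ov{\MM}(\gothc,\mu_\nn,t_m)$ produced in \cite[\S 3]{AIP}, matching the blow-up ideals $(h_{\gothp_i},p^{t_i})$ on either side. Because $\w_{\pmb{A}}$ and the canonical-subgroup data underlying the construction of $\Omega^{\dagger,\k^\V}$ are determined by $(A,\iota)$ alone, one obtains a canonical identification $\varphi_\e^{*}\Omega^{\dagger,\k^\V}\cong \Omega^{\dagger,\k^\V}$, compatible with the cuspidal twist by the boundary divisor.

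Pullback of cuspidal sections along $\varphi_\e$ then gives an isomorphism of the $G^{*}$-form spaces, and composing with the projector $\mathfrak{e}_{\k^\V,r^\V}$ descends this to the desired map $\mathscr{L}_\e$ on $G$-forms. For the final assertion, if $\e'=\eta^{2}\e$ for some $\eta\in\OO_F^{\times,+}$, then multiplication by $\eta$ on $A$ provides a natural isomorphism $(A,\iota,\e'\l,\Phi)\cong(A,\iota,\e\l,\eta\Phi)$ under which $\mathscr{L}_{\e'}$ and $\mathscr{L}_\e$ differ by the scalar $w(\eta^{2})$, which is annihilated after applying the projector exactly as in the classical $G$-form construction. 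The main technical obstacle is the lifting of $\varphi_\e$ through the Andreatta--Iovita--Pilloni blow-up construction and the verification that the overconvergent sheaf is genuinely pulled back (not merely abstractly isomorphic on the rigid generic fibre); this reduces to the polarization-independence of $h_{\gothp_i}$, $\w_{\pmb{A}}$ and the canonical subgroup, together with the $p$-adic invertibility of $\e$ that is built into the hypothesis.
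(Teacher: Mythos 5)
The paper does not actually prove this lemma --- it simply cites \cite[Lemma~4.5]{AIP}. Your geometric reconstruction of what that proof must do is essentially correct and is the standard approach: transport the moduli data along $(A,\iota,\l,\Phi)\mapsto(A,\iota,\e\l,\Phi)$, observe that the partial Hasse invariants, $\w_{\pmb{A}}$ and the canonical subgroup depend only on $(A,\iota)$, and hence that $\varphi_\e$ lifts compatibly through the Andreatta--Iovita--Pilloni formal models, carrying $\Omega^{\dagger,\k^\V}$ to itself. The observation that $p$-adic invertibility of $\e$ is needed for the lift through the blow-up ideals $(h_{\gothp_i},p^{t_i})$ is exactly right.

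However, your treatment of the final assertion has a real gap. The lemma claims independence of $\e$ modulo $\OO_F^{\times,+}$, i.e.\ that $\mathscr{L}_{\eta\e}=\mathscr{L}_\e$ for \emph{every} $\eta\in\OO_F^{\times,+}$ (after identifying $\eta\e\gothc$- and $\e\gothc$-forms). You only argue the case $\e'=\eta^2\e$, using multiplication by $\eta$ on $A$, and even in that case the level structure shifts to $\eta\Phi$, so a priori one only controls $\eta^2$ with $\eta\equiv 1\bmod\nn$ (i.e.\ $\eta^2\in\mathfrak{S}_\nn^2$). The correct argument goes through the group $\mathfrak{O}=\OO_F^{\times,+}/\mathfrak{S}_\nn^2$ directly: for $\eta\in\OO_F^{\times,+}$, the map $\mathscr{L}_\eta$ on the $G^*$-form space is, up to the $w(\eta)$-normalisation built into the sheaf identification, precisely the moduli action of $\eta$ that the projector $\mathfrak{e}_{\k^\V,r^\V}=\frac{1}{|\mathfrak{O}|}\sum_{\eta\in\mathfrak{O}}\eta$ averages over. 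The image of this projector consists of the $\mathfrak{O}$-invariants, on which the action of every class in $\mathfrak{O}$ is by definition trivial; hence $\mathscr{L}_\eta=\mathrm{id}$ on $G$-forms for \emph{all} totally positive units $\eta$, not just squares. Your ``annihilated after applying the projector'' phrase points in the right direction, but the explicit reduction to the $\eta^2$-isogeny is not what makes the argument close.
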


\begin{proof}
	This is \cite[Lemma 4.5]{AIP}.
\end{proof}

\begin{defn} Let $$S_\V^{G,\dagger}(U_1(\nn),\un{t}):= \left ( \bigoplus_{\gothc \in Frac(F)^{(p)}} S_\V^{G,\dagger}(\Gamma_1(\gothc,\nn),\un{t})\right ) / \left (\mathscr{L}_\e(f)-f \right)_{\e \in Princ(F)^{+,(p)}}$$ be the Banach module of tame level $U$, {\it $\un{t}$-overconvergent   cuspidal arithmetic Hilbert modular forms} for $G$ with weights parametrized by $\V$. Here $Frac(F)^{(p)}$ is the group of fractional ideals prime to $p$ and $Princ(F)^{+,(p)}$ is the group of positive elements which are $p$-adic units.
	
	Moreover, taking the limit over $\un{t}$ we get Frechet spaces  $S_\V^{G,\dagger}(U_1(\nn))$ which give a quasi-coherent sheaf of overconvergent cuspidal arithmetic Hilbert modular forms $S^{G,\dagger}(U_1(\nn))$ over $\W^G$, whose value at an open affinoid $\V \subset \W^G$ is $S_\V^{G,\dagger}(U_1(\nn))$. 
\end{defn}

\begin{rmrk}
	Note that taking $\V=\Sp(L)$ with image $(\k,r)$ in $\W^G$ will give the spaces of this fixed weight. 
\end{rmrk}

Following \cite[Section 4.3]{AIP}, for $\gothq$ prime to the tame level, one can define commuting Hecke operators $T_\gothq,S_\gothq$ action on $S^{G,\dagger}(U_1(\nn))$. Moreover, for $\gothp|p$ one can define operators $U_{\gothp}$ such that $U_p=\prod_{\gothp|p} U_{\gothp}^{e_\gothp}$ for $e_\gothp$ the ramification degree of $\gothp$.

\begin{prop}
	The $U_p$ operator is a compact operator on $S_{\k,r}^{G,\dagger}(U_1(\nn))$ for any weight $(\k,r)$. 
\end{prop}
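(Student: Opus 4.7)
The plan is to realize $U_p$ as a composition that includes a compact restriction map and then invoke the ideal property of compact operators on a Banach space. Since $U_p = \prod_{\gothp \mid p} U_{\gothp}^{e_\gothp}$ and a product of compact operators is compact, it suffices to establish compactness of each $U_{\gothp}$ on $S_{\k,r}^{G,\dagger}(\Gamma_1(\gothc,\nn),t_m)$ for a suitable $t_m$, since the direct sum over $\gothc$ and the quotient by $\mathscr{L}_\e(f) - f$ that define $S_{\k,r}^{G,\dagger}(U_1(\nn))$ both preserve compactness.

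First I would recall the construction of $U_{\gothp}$ via a Hecke correspondence on the $t_m$-overconvergent Hilbert modular variety, as given in \cite[Section 4.3]{AIP}. The correspondence parametrizes $\OO_F/\gothp$-cyclic subgroup schemes of $A[\gothp]$ disjoint from the canonical subgroup; quotienting by such a subgroup strictly improves the valuation of the partial Hasse invariant $h_{\gothp}$. Translated into a statement about sheaves, the pullback-then-trace construction gives a continuous $L$-linear map
$$\widetilde{U}_{\gothp}: \hH^0\bigl(\ov{\M}(\gothc,\mu_\nn,t_m)\times\V,\,\Omega^{\dagger,\k^\V}(-B)\bigr) \lra \hH^0\bigl(\ov{\M}(\gothc,\mu_\nn,t_m')\times\V,\,\Omega^{\dagger,\k^\V}(-B)\bigr)$$
for some $t_m' > t_m$, i.e.\ $U_{\gothp}$ produces sections on a strictly \emph{larger} neighbourhood of the ordinary locus than the one on which its argument was defined.

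Next I would invoke the standard rigid-analytic fact that the restriction map
$$\rho: \hH^0\bigl(\ov{\M}(\gothc,\mu_\nn,t_m')\times\V,\,\Omega^{\dagger,\k^\V}(-B)\bigr) \lra \hH^0\bigl(\ov{\M}(\gothc,\mu_\nn,t_m)\times\V,\,\Omega^{\dagger,\k^\V}(-B)\bigr)$$
is a compact morphism of Banach $L$-modules, since the smaller affinoid $\ov{\M}(\gothc,\mu_\nn,t_m)$ is relatively compact inside the larger $\ov{\M}(\gothc,\mu_\nn,t_m')$ and the sheaf is coherent; concretely, one can produce an ON-basis on the larger affinoid whose sup-norms shrink strictly after restriction, which is the Schauder-type criterion for compactness. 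Writing $U_{\gothp} = \rho \circ \widetilde{U}_{\gothp}$ as an endomorphism of the $t_m$-overconvergent space exhibits it as a compact operator, and then the descent through the projector $\mathfrak{e}_{\k,r}$ and the quotient defining $S_{\k,r}^{G,\dagger}(U_1(\nn))$ is routine because compactness is inherited by restrictions to closed subspaces and quotients.

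The main obstacle is the technical verification in step one: one must check precisely that $U_{\gothp}$ actually improves the truncation parameter $t_m$, which amounts to tracking the behaviour of the partial Hasse invariants $h_{\gothp_i}$ under a $\gothp$-isogeny whose kernel avoids the canonical subgroup, and simultaneously checking the compatibility of this isogeny with the sheaf $\Omega^{\dagger,\k^\V}$. Both of these issues are addressed in detail in \cite[Sections 4.3--4.4]{AIP}, so in practice the proof reduces to an application of their results combined with the abstract Schauder compactness of restrictions.
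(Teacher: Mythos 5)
Your high-level scheme (write $U_p$ as ``improve overconvergence, then restrict along a relatively compact inclusion'') is the right one, and it is indeed the idea behind the result the paper cites, namely \cite[Lemma 3.27]{AIP}. The paper itself gives no more than that citation.

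However, the reduction in your first paragraph is wrong. You propose to deduce compactness of $U_p$ from compactness of each factor $U_\gothp$, but the individual operators $U_\gothp$ are \emph{not} compact. The $U_\gothp$-correspondence involves a $\gothp$-isogeny, which improves the truncated valuation of the partial Hasse invariant $h_\gothp$ only; it leaves the other components $h_{\gothp_j}$, $j\neq i$, untouched, since the isogeny is an isomorphism on the $\gothp_j$-primary parts. So if $t_m'$ exceeds $t_m$ in the $\gothp_i$-coordinate alone, the strict neighbourhood $\ov{\M}(\gothc,\mu_\nn,t_m)$ is \emph{not} relatively compact inside $\ov{\M}(\gothc,\mu_\nn,t_m')$, and the restriction map fails to be compact. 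You can see this in the model of a polydisc: restricting $\hH^0$ from the polydisc of polyradius $(r_1,r_2)$ to the one of polyradius $(r_1',r_2)$ with $r_1'<r_1$ is not compact, because the ON-basis monomials $x_1^n x_2^m$ pick up a factor $(r_1'/r_1)^n$ after restriction, which does not tend to $0$ along the subsequence with $n$ bounded. Your Schauder criterion is correct, but the hypothesis it needs (uniform decay of the restricted ON-basis norms) simply fails here.

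The fix is to apply your ``improve then restrict'' argument to the \emph{composite} $U_p = \prod_\gothp U_\gothp^{e_\gothp}$ directly: the product of all the $U_\gothp^{e_\gothp}$ improves \emph{every} coordinate of $t_m$ simultaneously (the exponents $e_\gothp$ are there precisely so that the improvement is a full unit even when $p$ ramifies at $\gothp$), so the resulting restriction is from a strictly larger strict neighbourhood in all directions and is genuinely compact. The rest of your argument — passing from $G^*$ to $G$ via the projector $\mathfrak{e}_{\k,r}$, taking the direct sum over $\gothc$ and the quotient by the $\mathscr{L}_\e$-relations — is correct, since compactness survives finite direct sums and quotients by closed $U_p$-stable subspaces.
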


\begin{proof}
	This follows from \cite[Lemma 3.27]{AIP}.
\end{proof}

\begin{defn}\label{slope}
	Let $h \in \QQ_{\geq 0}$. We say an element $f \in  S_{\k,r}^{G,\dagger}(U_1(\nn))$ has {\it slope}-$\leq h$ for $U_p$ (resp. $U_{\gothp}$ for $\gothp | p$) if it is annihilated by a unitary polynomial in $U_p$ (resp. $U_{\gothp}$) whose roots have valuation less than $h$.\footnote{	Note that if $f$ is in fact an eigenform, then having slope-$\leq h$ for $U_p$ (resp. $U_{\gothp}$) is saying that the $p$-adic valuation of the $U_p$ (resp. $U_{\gothp}$) eigenvalue is less than $h$.}
\end{defn}

We now wish to use Buzzard's Eigenmachine to construct the eigenvariety of Hilbert modular forms. One of the key ingredients is the existence of links which is checked explicitly in \cite[Section 3.3.3]{hattori}.

\begin{thm}\label{AIPeig} 
	Associated to the eigendata of $(\W^G,S^{G,\dagger}(U_1(\nn)),\pmb{T},U_p)$ we have an eigenvariety $\CX_G(U_1(\nn))$ with the following properties:
	
	\begin{enumerate}[$(a)$]
		\item It is equidimensional of dimension $g+1$.
		\item There is a universal character $\phi:\pmb{T} \to \OOO_{\CX}$.
		\item There is a map $\a:\CX \to \W^G$ that is locally on $\CX$ and $\W^G$, finite and surjective.
		\item For all $(\kappa,r) \in \W^G$, the points $\a^{-1}(\k,r)$ are in bijection with the finite slope eigensystems occurring in $S^{G,\dagger}(U_1(\nn))\mid_{\kappa,r}=S^{G, \dagger}_{\kappa,r}(U_1(\nn))$.
		\item Let\/ $(k,r)$ be a classical weight in\/ $\W^G$. Let\/ $f \in S_{k,r}^{G,\dagger}(U)$ be a finite slope (for\/ $U_p$) overconvergent Hilbert modular form whose\/ $U_{\gothp_i}$ slope is less than\/ $h_i$ for\/ $\gothp_i \in \Si_p$. If\/ $p$ is unramified and\/ $h_i <  v_{\ps_i}(k,r) +\min_{j \in \Si_{\gothp_i}} \{k_j-1\} $ for all\/ $i$,  then\/ $f$ is a classical form.
		
	\end{enumerate}

\end{thm}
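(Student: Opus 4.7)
The plan is to verify that the quadruple $(\W^G, S^{G,\dagger}(U_1(\nn)), \pmb{T}, U_p)$ satisfies the requirements of Definition \ref{eigdat}, and then to apply Buzzard's Eigenmachine (Theorem \ref{th1}) to extract items (a)--(d) essentially for free; item (e) is the main point of substance and will be imported from \cite{AIP}. For the verification, I would note that for an admissible affinoid $\V \subset \W^G$ and an appropriate $(m, \un{t})$, each $S^{G,\dagger}_\V(U_1(\nn),\un{t})$ is a Banach $\OOO(\V)$-module satisfying $(Pr)$ (this is precisely the content of \cite[Theorem 4.4]{AIP}, quoted just above), and $U_p$ acts compactly on each fibre $S^{G,\dagger}_{\k,r}(U_1(\nn))$ by the proposition preceding the statement. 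The only remaining axiom of eigendata is the existence of a compatible system of links $S^{G,\dagger}_\V \to S^{G,\dagger}_{\V'} \widehat{\otimes}_{\OOO(\V')} \OOO(\V)$ for $\V \subset \V'$; this is constructed explicitly in \cite[Section 3.3.3]{hattori}.

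With the eigendata axioms in place, Theorem \ref{th1} delivers the rigid space $\CX_G(U_1(\nn))$ and a finite morphism $\CX_G(U_1(\nn)) \to \CZ(U_p)$; composing with the canonical flat (locally finite) projection $\CZ(U_p) \to \W^G$ gives the map $\a$ of (c). The universal character $\phi:\pmb{T} \to \OOO_{\CX}$ in (b) comes from the $\pmb{T}$-action baked into the construction. Part (d) is the statement, already part of the Eigenmachine, that fibres of $\a$ parametrize finite-slope generalized eigensystems of $\pmb{T}$ in $S^{G,\dagger}_{\k,r}(U_1(\nn))$. For the equidimensionality statement (a), I would invoke the last clause of Theorem \ref{th1} together with the computation in Section \ref{wei} showing $\W^G \cong H^\vee \times B(1,1)^{g+1}$ is equidimensional of dimension $g+1$.

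The hard part is (e), the small-slope classicality theorem. I would not reprove this but would cite it: it is proved in \cite{AIP} (their Théorème 1.1/main classicality result) using an analytic-continuation argument à la Coleman--Kassaei adapted to the Hilbert setting. The strategy there is to use the $U_{\ps_i}$-operators prime-by-prime to extend a small-slope overconvergent form from a strict neighbourhood of the ordinary locus across the partial Hasse loci $\{h_{\ps_i}=0\}$, the bound $h_i < v_{\ps_i}(k,r) + \min_{j \in \Si_{\ps_i}}\{k_j-1\}$ being exactly what is needed for each step of the analytic continuation to converge; once the form is defined on all of $\ov{\M}(\gothc,\mu_\nn)$ it is classical by rigid GAGA. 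The unramifiedness of $p$ enters through the exponent $e_\ps = 1$ in the factorisation $U_p = \prod_\ps U_\ps^{e_\ps}$, which makes the slope bound clean. Beyond quoting this result, the only task is to check that the sheaf-theoretic definition of $S^{G,\dagger}_{\k,r}(U_1(\nn))$ used above matches the one for which classicality is proved in \cite{AIP}, which is immediate from the construction via the projector $\mathfrak{e}_{\k,r}$ and Remark \ref{classicalex}.
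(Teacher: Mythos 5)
Your proof follows the same overall strategy as the paper's: verify the eigendata axioms, apply Buzzard's Eigenmachine to obtain (a)--(d), and invoke a classicality theorem for (e). Your explicit verification of the eigendata axioms (property $(Pr)$ via \cite[Theorem 4.4]{AIP}, compactness of $U_p$, and the links via \cite[Section 3.3.3]{hattori}) is actually more detailed than the paper's one-line proof, and all of that content is correct.

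There is, however, one substantive omission in part (e). You attribute the slope bound $h_i < v_{\ps_i}(k,r) + \min_{j\in\Si_{\gothp_i}}\{k_j-1\}$ directly to the classicality theorem of \cite{AIP}, describing it as the bound ``exactly what is needed for each step of the analytic continuation to converge'' in their Coleman--Kassaei argument. But the paper's proof explicitly flags that this is \emph{not} the bound appearing in \cite[Theorem 5.1]{AIP}: the stated version is a strengthening due to Tian \cite{tian}, and the paper adopts it precisely so that the Hilbert-side classicality criterion matches the control theorem proved on the quaternionic side in Section \ref{Hecke}. As written, your proposal would leave a gap: citing AIP alone does not yield the stated inequality, and without Tian's improvement the comparison of classical structures in Theorem \ref{thm:jl-interpolate} (which requires identical slope ranges on both sides) would not go through as cleanly. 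The fix is simply to cite \cite{tian} for the strengthened bound and note that this is the modification the paper makes to \cite[Theorem 5.1]{AIP}.
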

\begin{proof}
This is essentially \cite[Theorem 5.1]{AIP}, the only modification is that using a stronger version of the control theorem due to \cite{tian}, which matches up with the one we will have on the quaternionic side.
\end{proof}

\end{section}

\begin{section}{\textbf{ Eigenvarieties for totally definite quaternion algebras}}\label{totdef}

 Following  \cite[Part III]{buzeig}, we will define the eigenvariety attached to a totally definite quaternion algebra $D$ over $F$ and prove the control theorem in this setting. In contrast to \cite{buzeig}, we will work with the weight space $\W^G$ which, as we explained above, has the advantage of being equidimensional of dimension $g + 1$ (recall $[F:\QQ]=g$). Apart from this small detail, the rest of our construction spaces of overconvergent quaternionic modular forms over $F$ follows~\cite[Part III]{buzeig}. Throughout this section our chosen prime $p$ may be ramified unless otherwise stated. 

\begin{subsection}{Classical spaces}\label{quat}
	
We will define the spaces of classical quaternionic modular forms using a definition that, compared to \cite{hidaon}, is more suited to $p$-adic interpolation. Most of the set-up will be similar to that of $loc.$ $cit.$ but with the crucial difference that the actions are `shifted' from the infinite places to the places above $p$.	We recall here that $D/F$ is a totally definite quaternion algebra split above $p$.
	
\begin{nota}
	\begin{enumerate}[(a)]

	\item Let $G_D/\QQ_p:=\Res_{F/\QQ}(D^{\times}) \times \QQ_p$, which is a connected reductive linear algebraic group over $\QQ_p$ (via our choice of splitting). Let $T$ be the standard maximal torus, $B$ the standard Borel subgroup, and the unipotent radical $N$. We denote by $\ov{B}$ and $\ov{N}$ be the opposite Borel and opposite unipotent radical.
	Let $I \subset G_D(\ZZ_p)$ be the standard Iwahori subgroup in good position with respect to $B$ (in good position means that $B,N,T,G_D,\ov{N}$ have fixed compatible integral models over $\ZZ_p$).

	\item For $m \in \ZZ_{\geq 1}^{\Sigma_p}$, set $$I_m=\left \{ \left ( \begin{matrix} a&b\\ c&d \end{matrix} \right) \in G(\ZZ_p) \mid c \in \pi^m \OO_p \right \},$$ with $I=I_1=I_{(1,\dots,1)}$ and let $\ov{I}_m=\ov{N}(\ZZ_p) \cap I_m$.
	
	Furthermore, we set $$T^{+}=\left \{ t \in T(\QQ_p) \mid t N(\ZZ_p) t^{-1} \subseteq N(\ZZ_p) \right \}=\left \{\left ( \begin{matrix} a&0\\ 0&b \end{matrix} \right) \in T(\QQ_p) \mid ab^{-1} \in \OO_p \right \}. $$ With this we define the semigroup $\Delta=IT^+I$. 	Note that the Iwahori decomposition tells us that $I=\ov{I}_1 T(\ZZ_p) N(\ZZ_p),$ and hence any $\delta \in \Delta$ can be written uniquely as $\delta=\ov{n}_\delta t_\delta n_\delta$ with $\ov{n}_\delta \in \ov{I}_1, t_\delta \in T^+, n_\delta \in N(\ZZ_p)$.

	\end{enumerate}
\end{nota}

\begin{defn}\label{vk}
	Let $(k,r,n,v,w)$ be a weight tuple with $(k,r) \in \ZZ_{\geq 0}^{\Sigma_\infty} \times \ZZ$. Let $V_k$ be the $L$-vector space with basis of monomials $\prod_{\vv \in \Sigma_\infty}Z_\vv^{m_\vv}$, with $m \in  \ZZ_{\geq 0}^{\Sigma_\infty}$, $0 \leq m_\vv \leq k_\vv-2$. We define a right action of $\Delta=IT^{+}I$ on this space as follows: for $\g=(\g_{\ps})_{\ps \in \Sigma_p}=\left ( \begin{smallmatrix} a_\ps&b_\ps\\ c_\ps&d_\ps \end{smallmatrix} \right)_\ps \in \Delta$, let $$\g: \prod_{\vv \in \Sigma_\infty}Z_s^{m_s} \longmapsto \prod_{\vv \in \Sigma_\infty}(c_{\vv} Z_\vv+d_{\vv})^{n_\vv} \det(\g_{\vv})^{v_{\vv}} \left ( \frac{a_{\vv} Z_\vv+b_{\vv}}{c_{\vv} Z_s +d_{\vv}} \right )^{m_\vv}$$
	Note that here (following \cite{buzeig}) we have adopted the notation that for $a_\ps$ (resp.\ $b_\ps,c_\ps,d_\ps$) we let $a_\vv$ (resp.\ $b_\vv, c_\vv,d_\vv$) denote the image of $a_\ps$ under the corresponding map $\ii \circ \ii_\vv$  for $\vv \in \Sigma_\ps$. Let $V_{n,v}(L)$  denote the resulting $\Delta$-module.\footnote{Note that since we have chosen weights such that $n+2v$ is parallel, $\OO_F^{\times,+}$ will act trivially, when embedded diagonally into $I$ via $\OO_F \to \OO_p \to M_2(\OO_p).$}
\end{defn}

\begin{defn}Let $\K$ be an open compact subgroup of $G_D(\AA_f)$, such that its image under the projection $\K \to D_p^{\times}$ lies in $I_1$  and  let $(k,r,n,v,w)$ be a weight tuple with $(k,r) \in \W^G$ a classical weight. The space of {\it quaternionic modular forms} over $D$ of weight $k$ and level $\K$, denoted $S_{k,r}^{D}(\K)$, is the space of functions $$f:G_{D}(\AA_f) \lra V_{n,v}(L)$$ such that:

	\begin{enumerate}[$(a)$]
		\item For $\g \in G_{D}(\QQ)$, we have $f(\g g)=f(g)$ for all $g \in G_D(\AA)$.
		\item For $u \in \K$ we have $f(g)=f(gu^{-1}) \cdot u_{p}$ for all $g \in G_{D}(\AA)$, where $u_p$ denotes the $p$-part of $u$.
	\end{enumerate}
\end{defn}

We will only be interested in the following open compact subgroups of $G_D(\AA_f)$:

\begin{defn}\label{cong}	
 Let $\nn=\prod_v \gothq_{v}^{e_v}$ be an integral ideal. For $D$ a totally definite quaternion algebra\footnote{Note that if we take $D=M_2(F)$ one obtains the usual level structures for Hilbert modular forms.} with $(\disc(D),\nn)=1$, we fix splitting at all primes dividing $\nn$ of $D$. Then we define
\begin{align*}
\K_1(\nn)&:= \left \{\g \in (\OO_D \otimes \wh{\ZZ})^\times \mid \g \equiv \left ( \begin{smallmatrix} *&*\\ 0&1  \end{smallmatrix} \right) \mod \nn \right \},\\
\K_0(\nn)&:= \left \{\g \in (\OO_D \otimes \wh{\ZZ})^\times \mid \g \equiv \left ( \begin{smallmatrix} *&*\\ 0&*  \end{smallmatrix} \right) \mod \nn \right \},\\
\K(\nn)&:= \left \{\g \in (\OO_D \otimes \wh{\ZZ})^\times \mid \g \equiv 1 \mod \nn \right \}.
\end{align*}		

\end{defn}

\end{subsection}

\begin{subsection}{Overconvergent spaces}

We are now going to define the spaces of overconvergent modular forms for $D$. For this we need to find a larger $\Delta$-module containing $V_{n,\v}(L)$, so work with the spaces of locally analytic functions.

 	Let $X \subset \QQ_p^{s}$ be open and compact. 
 \begin{defn}\label{lan}
 	For a finite extension $L/ \QQ_p$, we say a function $f:X \to L$ is {\it{$L$-analytic}} if it can be expressed as a converging power series $f(x_1,\dots,x_s)=\sum_{t_1,\dots,t_s} \a_{t_1,\dots,t_s} (x_1-a_1)^{t_1}\cdots (x_s-a_s)^{t_s},$ for $\a_{t_1,\dots t_s} \in L$, and some $(a_1,\dots,a_s) \in X$. We say it is {\it{algebraic}} if almost all $\a$'s are zero. 
 \end{defn}
 
 \begin{defn}
 	For $m \in \ZZ^r_{\geq 0}$, let $\A(X,L,m)$ be the $L$-vector space of {\it $m$-locally analytic functions}, i.e., functions that are analytic on balls of radius $p^{-m}$ covering $X$.
 	Then $\A_m(X,L)$ is a $p$-adic Banach space when $X$ is  compact. We let $\A(X,L)= \bigcup_{m \geq 0} \A(X,L,m).$
 	This is the space of functions $f: X \to L$ that are $m$-locally $L$-analytic for some $n$.
 \end{defn}

	We now define the $\Delta$-modules that we will be interested in.

\begin{defn}
	
	We begin by identifying $\OO_p$ with an open compact subset of $\QQ_{p}^{g}$ compatible with the identification of $I$ as an open compact of $\QQ_p^{4g}$. We then consider $\A(\OO_p,L)=\bigcup_{m \geq 0} \A(\OO_p,L,m).$ This is a $\Delta$-module with the following action. For $(\k,r,n,v,w)$ a weight tuple with $(\kappa,r) \in \W^G(L)$, $f \in \A(\OO_p,L)$, $\g=\left ( \begin{smallmatrix} a&b\\ c&d \end{smallmatrix} \right) \in \Delta $ and $z \in \OO_p$, let $$(f \cdot\g)(z)= n(cz+d)v(\det(\g))f\left (\frac{az+b}{cz+d} \right).$$ We denote this module by $\A_{n,v}(\OO_p,L)$. 
	
\end{defn}

	\begin{lem}
	For\/ $(\k,r) \in \W^G$ there exists a smallest\/ $m(\kappa,r)$, such that for all\/ $m \geq m(\kappa,r)$, $(\kappa,r)$ is\/ $m$-locally analytic.
\end{lem}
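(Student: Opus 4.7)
The plan is to reduce the statement to two claims: (a) if $(\kappa,r)$ is $m$-analytic then it is $m'$-analytic for every $m' \geq m$, and (b) there exists at least one $m$ witnessing analyticity. Combined with the well-ordering of $\ZZ_{\geq 0}$, these yield a smallest $m(\kappa,r)$. Throughout, in accordance with Definition \ref{ann}, I pass $(\kappa,r)$ through the map $\W^G \to \W^{G^*}$ of \ref{gstar} and work with the resulting continuous character $\chi : \TT(\ZZ_p) \to L^{\times}$.

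For (a), suppose $\chi|_{\TT_m^0(\ZZ_p)} = \exp \circ \psi \circ \log$ for some $\ZZ_p$-linear map $\psi$. Since $\TT_{m'}^0 \subseteq \TT_m^0$ whenever $m' \geq m$, the restriction $\chi|_{\TT_{m'}^0(\ZZ_p)}$ admits the same factorization with $\psi$ replaced by its restriction to $\log(\TT_{m'}^0(\ZZ_p))$, and the restriction of a $\ZZ_p$-linear map remains $\ZZ_p$-linear. Hence the set of $m$ for which $(\kappa,r)$ is $m$-analytic is upward closed in $\ZZ_{\geq 0}$.

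For (b), the key input is the standard fact that continuous characters of compact $p$-adic Lie groups are locally analytic. Identifying $\TT(\ZZ_p)$ with $\OO_p^{\times}$, for $s$ sufficiently large the group $\TT_s^0(\ZZ_p)$ corresponds to the pro-$p$ subgroup $1 + p^s \OO_p$, and the $p$-adic logarithm gives a group isomorphism $\log : \TT_s^0(\ZZ_p) \to p^s \OO_p$ with inverse $\exp$. Since $\chi$ is continuous with $\chi(1) = 1$, by possibly enlarging $s$ we may further assume that $\chi(\TT_s^0(\ZZ_p))$ lies in an open neighborhood of $1 \in L^{\times}$ on which $\log$ converges and is a local homeomorphism onto its image in $L$. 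The composition $\psi := \log \circ \chi \circ \exp : p^s \OO_p \to L$ is then a continuous additive homomorphism between $\ZZ_p$-modules, and any such continuous additive homomorphism is automatically $\ZZ_p$-linear (one first checks $\psi(nx) = n\psi(x)$ for $n \in \ZZ$ by induction, and then extends to $\ZZ_p$-scalars by continuity since $\ZZ$ is dense in $\ZZ_p$). Taking $m = s$ yields the required factorization $\chi|_{\TT_m^0(\ZZ_p)} = \exp \circ \psi \circ \log$.

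There is no serious obstacle here: the only mild technical point is coordinating the choice of $s$ so that $\log$ is simultaneously an isomorphism on $\TT_s^0(\ZZ_p)$ and the image $\chi(\TT_s^0(\ZZ_p))$ lies in the convergence region of $\log$. Both conditions are achieved by taking $s$ large enough, which is possible by continuity of $\chi$ at the identity, and the lemma then follows.
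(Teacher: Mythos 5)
Your proof is correct: part (a) correctly observes that the $m$-analyticity condition is upward closed, and part (b) gives the standard argument that a continuous $p$-adic character is analytic near the identity by conjugating with $\exp$ and $\log$ and using the fact that continuous additive maps on $\ZZ_p$-modules are $\ZZ_p$-linear. The paper itself does not give a self-contained argument but simply cites Urban (Lemma 3.2.5 of \cite{urb}); what you have written is precisely the content of that lemma worked out directly, so this is the same approach rather than a genuinely different route.
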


\begin{proof}
	See \cite[Lemma 3.2.5]{urb}.
\end{proof}

From this it follows that  $\A_{n,v}(\OO_p,L)=\bigcup_{m \geq m(\kappa,r)}\A_{n,v}(\OO_p,L,m),$ where $\A_{n,v}(\OO_p,L,m)$  is the $\Delta$-module $\A(\OO_p,L,m)$ with the action defined as above. More generally, since we wish to consider families of modular forms, one can extend this definition as follows:

\begin{defn}
	If $\V$ is an affinoid subdomain of $\W^G$ defined over a finite extension $L/\QQ_p$ and $(\kappa^{\V},r^\V)$ is the restriction of the universal character to $\V$, then we define $\A_{\V}(\OO_p,L):=\A_{n^{\V},v^\V}(\OO_p,L),$ with the action of $\Delta$ defined analogously where $(\k^\V,r^\V,n^\V,v^\V,w^\V)$ is a weight tuple.
\end{defn}

It follows from \cite[Lemma 3.4.6]{urb}, that there exists a smallest integer $m(\V)$ such that $(\kappa^{\V},r^\V)$ is $m(\V)$-analytic. Moreover, $\A_{\V}(\OO_p,L)=\bigcup_{m \geq m(\V)} \A_{\V}(\OO_p,L,m).$

	\begin{defn}Let $(\k,r,n,v,w)$ be a weight tuple with $(\kappa,r) \in \W^G(L)$ and $\K$ be an open compact subgroup of $G_D(\AA_f)$, such that its image under the projection $\K \to D_p^{\times}$ lies in $I_{m}$ for some $m \geq 1$  and $t \in \ZZ_{\geq 0}^{\Sigma_p}$ is such that  $t+m \geq m(\kappa)$ . The space of overconvergent quaternionic modular forms of weight $\kappa$, level $\K$ and radius of overconvergence $p^{-t}$, denoted  $S_{\kappa,r}^{D,\dagger}(\K,t)$ is the space of functions $$f:G_{D}(\AA_f) \lra \A_{n,v}(\OO_p,L,t)$$ such that:
		\begin{enumerate}[$(a)$]
		\item For $d \in G_{D}(\QQ)$, we have $f(d g)=f(g)$ for all $g \in G_D(\AA)$.
		\item For $\g \in \K$ we have $f(g)=f(g\g^{-1}) \cdot \g_{p}$ for all $g \in G_{D}(\AA)$, where $\g_p$ is the $p$-part of $\g$.
	\end{enumerate}
	If $\V \subset \W^G$ is an affinoid subdomain defined over $L$ and $t \geq m(\V)$, then  define $S_{\V}^{D,\dagger}(\K,t)$ to be the space of functions  $f:G_{D}(\AA_f) \lra \A_{\V}(\OO_p,L,t)$ satisfying $(a), (b)$ above. Lastly, taking the limit over $t$ we obtain Fr\'echet spaces $S_{\kappa,r}^{D,\dagger}(\K)$.
	
\end{defn}

\end{subsection}

\begin{subsection}{Hecke operators and the Control Theorem}\label{Hecke}

Following Section 12 of \cite{buzeig}, we define the Hecke operators on these spaces.

\begin{defn}For $\K'=U_*(\nn) \cap U_i(\pi^s)$  with $\nn$ coprime to $\pi$, we call $\nn$ the {\it tame level} and $\pi^s$ the {\it wild level}.
\end{defn}	

\begin{nota}\label{hecke} If $\vv$ is a finite place of $F$, such that $D_\vv$ is split, then let $\eta_\vv \in G_D(\AA_f)$ be the element which is the identity at all places different from $\vv$ and at $\vv$ it is the matrix $ \left ( \begin{smallmatrix} \pi_\vv&0\\ 0&1 \end{smallmatrix} \right)$, for $\pi_\vv$ a uniformiser of $F_\vv$.  In order to ease notation later on, when $\vv|p$ we choose the same uniformisers as we had before. 
\end{nota}

\begin{defn}\label{heckealg} Let $U$ have tame level $\nn$ and wild level $\pi^s$. For each $\vv$ as above, we define the Hecke operators $T_\vv$ as the double coset operators given by $[\K \eta_\vv \K]$. Moreover, if $\vv$ is coprime to level, then we can regard $\pi_\vv$ as an element of the centre of $D_f^\times$ and we denote by $S_\vv$ the operator $[\K \pi_\vv \K]$. Lastly, for each $\ps \in \Si_p$ let $U_{\ps}$ denote the operator $T_{\ps}$ and let $U_p=\prod_{\gothp \in \Sigma_p} U_{\ps}^{e_{\ps}}$. We denote by $\pmb{T}=\pmb{T}^D(\K)$, the Hecke algebra generated by the operators\footnote{Note that these operators are independent of choice of uniformiser for $\v$ not dividing $p$. } $T_{\gothq}, U_{\ps}$, where $\gothq \nmid \nn\d$ with $\d=\disc(D)$ and $\ps \in \Sigma_p$.

\end{defn}

	We now want to show that the overconvergent quaternionic modular forms of small slope are classical. To do this we will follow the proof of the case $F=\QQ$ in \cite[Section 7]{kevp}. We begin with some preliminaries.

\begin{lem}The\/ $U_p$ operator acting on\/ $S_{\V}^{D,\dagger}(\K\cap \K_0(\pi^s),t)$ for\/ $s+t \geq m(\V)$ is compact.  In particular, this holds for the spaces\/ $S_{\kappa,r}^{D,\dagger}(\K\cap \K_0(\pi^s),t)$ for\/ $s + t \geq m(\kappa,r)$.

\end{lem}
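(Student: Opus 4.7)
The strategy is the standard one (as carried out for $F=\QQ$ in \cite[Section 7]{kevp}): factor $U_p$ through a map which strictly improves analyticity, and then invoke Serre's compactness criterion for $p$-adic Banach spaces over affinoids. Since $D$ is totally definite, $G_D(\RR)$ is compact, so the double coset set $G_D(\QQ) \backslash G_D(\AA_f) / \K'$, with $\K' := \K \cap \K_0(\pi^s)$, is finite. Choosing representatives $g_1,\dots,g_h$ and setting $\Gamma_i = G_D(\QQ) \cap g_i \K' g_i^{-1}$ (a finite group, as the intersection of the discrete group $G_D(\QQ)$ with the compact set $G_D(\RR)\times g_i\K' g_i^{-1}$), evaluation at the $g_i$ gives an identification
\[
S_\V^{D,\dagger}(\K', t) \;\cong\; \bigoplus_{i=1}^h \A_\V(\OO_p, L, t)^{\Gamma_i},
\]
with $\Gamma_i$ acting through its image in $D_p^\times \cap I_s$. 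This reduces the compactness assertion to a statement about an explicit operator on a finite direct sum of copies of $\A_\V(\OO_p,L,t)$.

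Next, I would compute $U_p = \prod_{\ps \in \Sigma_p} U_\ps^{e_\ps}$ via explicit coset decompositions. The wild level condition $\K'\subset \K_0(\pi^s)$ together with the Iwahori decomposition forces the representatives for $\K' \eta_\ps \K'/\K'$ to have $\ps$-component $\left(\begin{smallmatrix} \pi_\ps & \tilde{\jmath} \\ 0 & 1 \end{smallmatrix}\right)$ for $\jmath \in \OO_\ps/\pi_\ps\OO_\ps$ and identity at all other finite places. Under the $\Delta$-action of Definition~\ref{vk} extended to $\A_\V(\OO_p,L,t)$, such a $\delta_{\ps,\jmath}$ transforms the $\ps$-coordinate by $z_\ps \mapsto \pi_\ps z_\ps + \tilde{\jmath}$, which contracts the disc $\OO_\ps$ strictly into a proper subdisc. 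Therefore each $\delta_{\ps,\jmath}$, and hence $U_p$ itself, factors as a bounded operator into the subspace $\A_\V(\OO_p,L,t')$ of strictly more analytic functions, for some $t' < t$. The hypothesis $s+t \geq m(\V)$ is precisely what ensures that the universal weight character remains analytic after this improvement, so that the factorization is actually defined.

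Finally, I would invoke the standard fact (due to Serre) that over an affinoid algebra the inclusion $\A_\V(\OO_p,L,t') \hookrightarrow \A_\V(\OO_p,L,t)$ is completely continuous: both sides are potentially ON-able, and in the orthonormal bases of appropriately scaled monomials the inclusion is diagonal with entries tending to zero in the $p$-adic norm. Composing $U_p$, viewed as a bounded map into the $t'$-analytic space, with this compact inclusion yields the compactness of $U_p$ on $S_\V^{D,\dagger}(\K',t)$, and the $\Gamma_i$-invariants statement above transfers this to the whole module. The fixed-weight assertion is the specialization $\V = \Spf(L)$, for which $m(\V) = m(\kappa,r)$. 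The main obstacle in executing this plan is the local bookkeeping at ramified primes above $p$: verifying that the wild level structure really does cut the double coset $\K'\eta_\ps\K'/\K'$ down to the claimed unipotent representatives (so that the $c$-entries vanish mod $\pi_\ps^s$), and checking that the analyticity improvement occurs component by component at each $\ps$, so that the full product $\prod_\ps U_\ps^{e_\ps}$ contracts in every $p$-adic direction simultaneously.
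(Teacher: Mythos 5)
Your proof is correct and matches the argument that the paper simply cites (\cite[Lemma 12.2]{buzeig} and \cite[Lemma 3.2.8]{urb}): reduce to a finite direct sum of spaces of invariants in $\A_{\V}(\OO_p,L,t)$, compute $U_p$ via coset representatives whose action on the variable $z$ strictly contracts $\OO_p$ in every $p$-adic direction, factor $U_p$ through the inclusion $\A_{\V}(\OO_p,L,t')\hookrightarrow\A_{\V}(\OO_p,L,t)$ for $t'<t$, and invoke complete continuity of that inclusion.

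One small inaccuracy you should repair: $G_D(\RR)\cong\prod_{v|\infty}\mathbb{H}^\times\cong\prod_{v|\infty}\left(\RR_{>0}\times SU(2)\right)$ is \emph{not} compact, and correspondingly $\Gamma_i=G_D(\QQ)\cap g_i\K' g_i^{-1}$ need not be finite (for $g>1$ it contains a subgroup of $\OO_F^\times$ of rank up to $g-1$). What is true, and what the reduction to invariants actually rests on, is that $D$ being totally definite makes $\Gamma_i$ finite modulo its intersection with the centre $F^\times$, and the weight tuple condition (parallel $n+2v$, see the footnote to Definition \ref{vk}) forces that central subgroup to act trivially on $\A_{\V}(\OO_p,L,t)$. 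So the invariants are genuinely computed against a finite group, but the justification needs this extra step rather than compactness of $G_D(\RR)$. The rest of the argument — the coset representatives $\left(\begin{smallmatrix}\pi_\ps&j\\0&1\end{smallmatrix}\right)$, the contraction $z_\ps\mapsto\pi_\ps z_\ps+j$, the hypothesis $s+t\geq m(\V)$ guaranteeing the weight is analytic on the shrunken discs, and the prime-by-prime improvement assembled into compactness of $U_p=\prod_\ps U_\ps^{e_\ps}$ — is correct as you have it.
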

\begin{proof}See \cite[Lemma 12.2]{buzeig} or \cite[Lemma 3.2.8]{urb}.
	
\end{proof}

\begin{defn}
	Let $(\k,r)$ be an algebraic weight and let  $\k_i=(k_1,\dots,k_{i-1}, 2-k_i,k_{i+1},\dots k_g ).$ Note that if $\k=2w-r$ then $\k_i=2w'-r$ where $w'_j = w_j$ for $j \neq i$ and $w'_i=v_i$. For each $i \in \{1,\dots,g\}$ corresponding to a place in $ \Sigma_\infty$, we define a map $\Theta_i: S_{\k,r}^{D,\dagger}(\K,0) \lra  S_{\k_i,r}^{D,\dagger}(\K,0)$ by setting $\Theta_i(f)(h)=\frac{\partial^{k_i-1}f(h)}{\partial z_i^{k_i-1}}$ for $h \in G_D(\AA_f)$.
\end{defn}
Note that $f(h) \in \A_{n,v}(\OO_p,L,0)$ so it can be written as a converging power series in variables $(z_1,\dots,z_g)$, so $\frac{\partial^{k_i-1}f(h)}{ \partial z_i^{k_i-1}}$ makes sense. Moreover, one needs to check that $\Theta_i$ is actually well-defined, but this follows at once from the simple check that for any $\g \in I$ we have $\Theta_{i}(f)|_\g=\Theta_{i}(f|_\g)$.

\begin{thm}[\bf Control Theorem] Let\/ $\K'=U_*(\nn)$ with $(\nn,\pi)=1$ and\/ $\K=\K' \cap \K_1(\pi^s)$ for\/ $s \geq 1$ and let\/ $(k,r)$ be a classical weight. Let\/ $f \in S_{k,r}^{D,\dagger}(\K,t)$ be an eigenform for each\/ $U_{\ps_i}$ with eigenvalue\/ $\a_{\ps_i}$. If for each\/ $\ps_i|p$ we have  $$\val_p(\a_{\ps_i}) < \frac{v_{\ps_i}(k,r)+ \min_{j \in \Si_{\ps_i}} \{k_j-1\}}{e_{\ps_i}},$$ where\/ $e_{\ps_i}$ is the ramification degree, then\/ $f \in S_{k,r}^{D}(\K)$  (in other words, $f$ is classical).
	
\end{thm}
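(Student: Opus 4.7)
Following Buzzard's strategy from \cite[\S 7]{kevp}, the plan is to use the theta operators $\Theta_j$ as the bridge between overconvergent and classical forms. The key observation is that $V_k \subset \A_{n, v}(\OO_p, L, 0)$ is precisely the subspace of (globally) rigid-analytic elements annihilated by every $\Theta_j = \partial_{z_j}^{k_j-1}$, so a globally analytic overconvergent form $f$ lies in $S_{k, r}^D(\K)$ if and only if $\Theta_j f = 0$ for every $j \in \Sigma_\infty$; the task therefore reduces to showing the slope hypothesis forces each $\Theta_j f$ to vanish.

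The first ingredient is the Hecke-commutation $\Theta_j \circ U_\ps = U_\ps \circ \Theta_j$, where on the right $U_\ps$ acts on weight-$\k_j$ forms. Taking representatives $\delta_\nu = \left(\begin{smallmatrix} \pi_\ps & \nu \\ 0 & 1 \end{smallmatrix}\right)$ (at place $\ps$) for the coset decomposition of $\K \eta_\ps \K$, the formula for the $\Delta$-action gives $(f \cdot \delta_\nu)(z) = v(\pi_\ps)\, f(\pi_\ps z + \nu)$, and applying the chain rule yields $\partial_{z_j}^{k_j-1}(f \cdot \delta_\nu)(z) = v(\pi_\ps)\, \iota_j(\pi_\ps)^{k_j-1}\, (\partial_{z_j}^{k_j-1} f)(\pi_\ps z + \nu)$. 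Since the weight shift $\k \mapsto \k_j$ replaces $v_j$ by $w_j = v_j + k_j - 1$, the extra factor $\iota_j(\pi_\ps)^{k_j-1}$ is exactly what turns $v(\pi_\ps)$ into $v_{\k_j}(\pi_\ps)$, so $\Theta_j(f \cdot \delta_\nu) = (\Theta_j f) \cdot \delta_\nu$ in weight $\k_j$; summing over $\nu$ gives the commutation. In particular, if $U_\ps f = \alpha_\ps f$ then $\Theta_j f \in S^{D,\dagger}_{\k_j, r}(\K, t)$ is either zero or a $U_\ps$-eigenform of eigenvalue $\alpha_\ps$.

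The second ingredient is a lower bound on $U_\ps$-eigenvalues in weight $\k_j$. On $\A_{n_j, v_j}(\OO_p, L, t)$ each $\delta_\nu$ acts as the composition of multiplication by the scalar $v_{\k_j}(\pi_\ps)$ and pullback along the affine map $z \mapsto \pi_\ps z + \nu$, which sends $\OO_p$ into itself and is therefore norm-non-increasing; the non-archimedean triangle inequality then gives $\|U_\ps\|_{\mathrm{op}} \leq |v_{\k_j}(\pi_\ps)|_p$. A direct valuation computation yields $\val_p(v_{\k_j}(\pi_\ps)) = (v_\ps(k, r) + k_j - 1)/e_\ps$, so any nonzero $U_\ps$-eigenvalue on $S^{D,\dagger}_{\k_j, r}(\K, t)$ has $p$-adic valuation at least $(v_\ps(k, r) + k_j - 1)/e_\ps$.

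To conclude: for any $\ps \mid p$ and any $j \in \Sigma_\ps$, $k_j - 1 \geq \min_{j' \in \Sigma_\ps}(k_{j'}-1)$, so the hypothesis gives $\val_p(\alpha_\ps) < (v_\ps(k, r) + k_j - 1)/e_\ps$, which by the previous paragraph rules out $\alpha_\ps$ as a $U_\ps$-eigenvalue in weight $\k_j$ and so forces $\Theta_j f = 0$. Since $\Sigma_\infty = \bigsqcup_{\ps \mid p} \Sigma_\ps$, all theta operators kill $f$, and $f$ is classical. The main obstacle is the analytic preliminary used in the first paragraph: the characterization $V_k = \bigcap_j \ker \Theta_j$ requires $f(h)$ to be genuinely rigid-analytic on $\OO_p$, whereas a priori $f$ takes values only in the locally analytic space $\A_{n,v}(\OO_p,L,t)$ with $t > 0$. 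I handle this by the standard bootstrapping: $U_\ps$ improves the radius of analyticity of any element in its image, so any $U_\ps$-eigenform with nonzero eigenvalue in $S^{D,\dagger}_{\k, r}(\K, t)$ automatically lies in $S^{D,\dagger}_{\k, r}(\K, 0)$, after which the polynomial characterization applies.
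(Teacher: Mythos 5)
Your proposal follows the same strategy as the paper's proof (which is itself a sketch, referring to Buzzard's \cite[\S 7]{kevp} for the $\QQ$-case and to Yamagami for the Hilbert case): use the theta operators $\Theta_j$, characterize classicality as $\bigcap_j\ker\Theta_j$, and force $\Theta_j f = 0$ by comparing the given slope against the operator-norm lower bound for $U_\ps$-eigenvalues in the shifted weight $\k_j$. The computations you spell out — the chain-rule verification that $\Theta_j$ intertwines $U_\ps$ across the weight shift $v_j\mapsto v_j + k_j - 1 = w_j$, the valuation $\val_p\bigl(v_{\k_j}(\pi_\ps)\bigr) = (v_\ps(k,r)+k_j-1)/e_\ps$, and the bootstrapping from $t$-overconvergence to $t=0$ using that each $U_\ps$ contracts in the $\ps$-direction — are exactly the details the paper leaves to the reference, so your argument is a correct fleshing-out of the same proof rather than a different route.
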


\begin{proof}We will only sketch the proof, but the full details\footnote{Up to normalization} can be found in \cite[Theorem 2.3]{Yama}. First note that if $\Theta_i(f)=0$ for all $i$ then $f$ must in fact be classical. The task is now to give a criterion for $f$ to be in this kernel based only on the slope of $f$. Now, let $U_{\gothp_i}^0=\pi_{\gothp_i}^{-v_{\ps_i}(k,r)}U_{\ps_i}$ which has operator norm $\leq 1$. Then any eigenform of $U_{\gothp_i}^0$ with negative slope must in fact be zero. Now $\Theta_i$ sends $U_{\gothp_i}^0$-eigenforms of slope $h$ to  $U_{\gothp_i}^0$-eigenforms of slope $h - \frac{ \min_{j \in \Si_{\ps_i}} \{k_j-1\}}{e_{\ps_i}}$, from which one can deduce the result. 
	
\end{proof}

	Using the above and the Eigenmachine we can construct the eigenvariety associated to overconvergent quaternionic modular forms for $D/F$. 

\begin{thm}\label{buzeig} Let\/ $\K=\K' \cap \K_0(\pi)$ with $\K'$ having level $\nn$  coprime to\/ $\pi$. Let $\CZ$ be the spectral variety defined as usual and\/ $\pmb{T}=\pmb{T}^D(\K)$ as defined in \ref{heckealg}. Lastly, let\/ $S^{D,\dagger}(\K)$ be the coherent sheaf given the nuclear Frechet spaces \/ $S_{{\V}}^{D,\dagger}(\K)$ where\/ $\V$ is an affinoid with a morphism\/ $\V \to \W^G$. Associated to the eigendata of\/ $(\W^G,S^{D,\dagger}(\K),\pmb{T},U_p)$ we have an eigenvariety\/ $\CX_D(U)$ which is equidimensional of dimension\/ $g+1$ and satisfies the conditions of Theorem \ref{th1}.

\end{thm}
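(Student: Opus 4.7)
The plan is to verify that the quadruple $(\W^G, S^{D,\dagger}(\K), \pmb{T}, U_p)$ fits the axiomatic framework of Definition \ref{eigdat}, so that Theorem \ref{th1} (the Eigenmachine) applies directly and produces an eigenvariety with the desired properties. I would break the verification into the four ingredients that constitute the eigendata.

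First, I would note that $\W^G$ is a reduced rigid space by its definition in \ref{wtspdf} (it is a disjoint union of open polydiscs times a finite discrete piece), and in particular it is equidimensional of dimension $g+1$. Then for any affinoid $\V \subset \W^G$, I would choose $t \geq m(\V)$ and observe that $\A_\V(\OO_p,L,t)$ is an orthonormalizable Banach $\OOO(\V)$-module (as a space of $t$-locally analytic functions on the compact set $\OO_p$). Since $S_\V^{D,\dagger}(\K,t)$ is realized, via the coset decomposition of $G_D(\AA_f)/\K$ (which is finite by totally definiteness of $D$ and compactness of $\K$), as a closed direct summand of finitely many copies of $\A_\V(\OO_p,L,t)$, it inherits property $(Pr)$. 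Passing to the Fr\'echet limit as $t$ varies gives $S_\V^{D,\dagger}(\K)$, but for the Eigenmachine one works affinoid-locally with the Banach models $S_\V^{D,\dagger}(\K,t)$, whose compact slope decompositions glue as $t$ varies.

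Next, the Hecke algebra $\pmb{T}=\pmb{T}^D(\K)$ defined in \ref{heckealg} acts on each $S_\V^{D,\dagger}(\K,t)$ by $\OOO(\V)$-linear endomorphisms, and the compactness of $\psi_\V(U_p)$ on $S_\V^{D,\dagger}(\K,t)$ (for suitable $t$) is precisely the content of the lemma preceding this theorem (the analogue of \cite[Lemma 12.2]{buzeig}). It remains to produce, for every inclusion $\V \subset \V'$ of admissible affinoid opens in $\W^G$, a continuous $\OOO(\V)$-linear and $\pmb{T}$-linear homomorphism
\[
\a_{\V\V'}:S_\V^{D,\dagger}(\K) \lra S_{\V'}^{D,\dagger}(\K)\,\widehat{\otimes}_{\OOO(\V')}\OOO(\V)
\]
which is a link in the sense of \ref{eigdat} and satisfies the cocycle condition. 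The natural candidate is the base change map induced by restriction of the universal character from $\V'$ to $\V$; by definition of $\A_\V$ versus $\A_{\V'}$, this is just the identity on underlying function spaces tensored over $\OOO(\V')$ with $\OOO(\V)$. The link property then follows by factoring through an intermediate Banach module where $U_p$ has an explicit decomposition into a compact and a continuous piece, as in \cite[Section 3.3.3]{hattori} and \cite[Construction 5.7]{buzeig}; the cocycle compatibility is automatic from the functoriality of base change.

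With these ingredients in place, Theorem \ref{th1} produces the eigenvariety $\CX_D(\K)$ together with its finite morphism to the spectral variety $\CZ(U_p)$ and the universal Hecke character, and since $\W^G$ is equidimensional of dimension $g+1$, so is $\CX_D(\K)$. The main technical obstacle is the explicit verification of the link condition, which requires a careful analysis of how the compact operator $U_p$ interacts with base change along shrinking affinoids; this is exactly the point where the arguments of \cite{hattori} in the Hilbert case translate mutatis mutandis to the quaternionic setting, since the module-theoretic structure of $\A_\V(\OO_p,L,t)$ is formally identical to that appearing there. Once the link is established, everything else is formal from the Eigenmachine.
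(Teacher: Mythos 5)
Your proposal is correct and follows essentially the same route as the paper, which simply cites \cite[Part III, Section 13]{buzeig} for the verification of the eigenmachine hypotheses and notes the dimension count; you are spelling out what that citation delegates (reducedness and equidimensionality of $\W^G$, property $(Pr)$ via the finiteness of the double coset space, compactness of $U_p$, and the link condition). One small slip: it is $G_D(\QQ)\backslash G_D(\AA_f)/\K$ that is finite (the class set), not $G_D(\AA_f)/\K$, and the resulting identification expresses $S_\V^{D,\dagger}(\K,t)$ as a finite direct sum of invariant subspaces $\A_\V(\OO_p,L,t)^{\Gamma_i}$ for finite groups $\Gamma_i$, each of which is a direct summand, so $(Pr)$ still follows.
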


\begin{proof}The existence of such an eigenvariety and the fact that it is equidimensional  follows from \cite[Section 13]{buzeig}. The fact that is it is equidimensional of dimension $g+1$ is due to the weight space that we have used.
\end{proof}

\end{subsection}

\end{section}

\begin{section}{\textbf{ p-adic Langlands functoriality}}
\setcounter{subsection}{1}
In this section we will relate the different eigenvarieties we have defined in the previous section. In particular, we prove the following:

	\begin{thm}\label{thm71}Let $D$ be a totally definite quaternion algebra with $\disc{D}=\d$ and $\nn$ an ideal with $(\nn,\d)=1$.  Let $\CX_{G}:=\CX_{G}(U_1(\nn \d))$ and $\CX_{D}:=\CX_D(U_1(\nn))$ be as in Theorems~\ref{AIPeig} and~\ref{buzeig} respectively, with $U_1(\nn\d)$ a level whose associated moduli problem is representable\footnote{Meaning that the moduli problem of HBAV with a $\mu_{\nn \d}$-level structure is representable.} and let $p$ be unramified. 
	Then these eigenvarieties are reduced and the classical Jacquet-Langlands correspondence can be interpolated to obtain a closed immersion $\CX_D \hookrightarrow  \CX_G$ satisfying the properties of Theorem \ref{int}. 
	
\end{thm}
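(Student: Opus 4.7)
The plan is to deduce Theorem \ref{thm71} from Chenevier's Interpolation Theorem (Theorem \ref{int}) and his reducedness criterion (Lemma \ref{red}), after carefully arranging the two eigenvarieties so that they share common data. First, I note that both eigenvarieties are already built over the same weight space $\mathcal{W}^G$ (by our choice of weight space on the quaternionic side in Section \ref{totdef}, matching the one used for $\CX_G$ in Theorem \ref{AIPeig}). I take $\pmb{T}$ to be the abstract Hecke algebra generated by the $T_\mathfrak{q}, S_\mathfrak{q}$ for primes $\mathfrak{q}\nmid \mathfrak{n}\mathfrak{d}p$ together with the $U_{\mathfrak{p}}$ for $\mathfrak{p}\in\Sigma_p$, and let $U=U_p$. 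By the construction in Definition \ref{heckealg} and the discussion after Definition \ref{HMFSdefn}, $\pmb{T}$ acts compatibly on both $S^{G,\dagger}(U_1(\mathfrak{n}\mathfrak{d}))$ and $S^{D,\dagger}(U_1(\mathfrak{n}))$, with $U_p$ compact in both cases.

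Next, I would take $X\subset \mathcal{W}^G(\CC_p)$ to be the set of classical algebraic weights $(k,r)$ with $k_j\geq 2$ for all $j\in \Sigma_\infty$ and all entries of $k$ of the same parity. Standard density arguments (see for instance \cite[Lemma 4.3]{chenjlc}) together with the product decomposition $\mathcal{W}^G\cong \bigsqcup_{\chi\in H^\vee} B(1,1)^{g+1}$ show that $X$ is very Zariski dense. For the classical structures I set $\CM_{G,x}^{cl}:=S_{k,r}^G(U_1(\mathfrak{n}\mathfrak{d}))$ and $\CM_{D,x}^{cl}:=S_{k,r}^D(U_1(\mathfrak{n}))$, viewed as finite-dimensional $\pmb{T}$-submodules of the overconvergent fibres. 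The property that $X_h\cap V$ has the same Zariski closure as $X\cap V$ for each $h$ and each open affinoid $V$ is exactly the content of the two control theorems: Theorem \ref{AIPeig}(e) on the Hilbert side and the Control Theorem in Section \ref{Hecke} on the quaternionic side, since for fixed $h$ the weights $(k,r)\in X\cap V$ with all $k_j$ large enough that the slope bound $h<\min_i(v_{\mathfrak{p}_i}(k,r)+\min_{j\in\Sigma_{\mathfrak{p}_i}}\{k_j-1\})$ holds are themselves very Zariski dense in $V$.

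The key input is then the classical Jacquet-Langlands correspondence. For $(k,r)\in X$, translating to automorphic representations via strong approximation for $D^\times$ (as $D$ is totally definite and split at $p$) one obtains a Hecke-equivariant isomorphism
$$S_{k,r}^D(U_1(\mathfrak{n}))\;\xrightarrow{\ \sim\ }\; S_{k,r}^{G,\,\mathfrak{d}\text{-new}}(U_1(\mathfrak{n}\mathfrak{d}))\hookrightarrow S_{k,r}^G(U_1(\mathfrak{n}\mathfrak{d}))$$
for the operators in $\pmb{T}$ (the Hecke algebra being defined to contain no operators at primes dividing $\mathfrak{d}$, so both sides carry matching systems of eigenvalues, including at $\mathfrak{p}|p$). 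This implies the divisibility $\det(1-\psi_D(tU)Y\mid\CM_{D,x}^{cl})\,\big|\,\det(1-\psi_G(tU)Y\mid\CM_{G,x}^{cl})$ in $k(x)[Y]$ for every $t\in\pmb{T}$. Applying Theorem \ref{int} to $(\W^G,S^{D,\dagger},\pmb{T},U_p)$ and $(\W^G,S^{G,\dagger},\pmb{T},U_p)$ yields a canonical closed immersion $\iota_D\colon \CX_D^{red}\hookrightarrow \CX_G^{red}$ interpolating the classical correspondence.

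To finish I need reducedness of both eigenvarieties. For this I apply Lemma \ref{red} with the sets $X_h^{ss}\subset X$. The point is that for classical algebraic weights $(k,r)\in X$ the finite-slope classical Hecke module $\CM_x^{cl}\cap\CM_x^{\leq h}$ is semisimple: this follows from the standard multiplicity-one/strong-multiplicity-one theorems for cuspidal automorphic representations of $\mathrm{GL}_2$ over $F$ (and their $D^\times$ counterparts via JL), combined with the fact that at the $p$-adic places the $U_{\mathfrak{p}}$-operator acts on each local factor with distinct eigenvalues outside the finitely many weights where the characteristic polynomial has a repeated root. Since both conditions (small slope and semisimplicity) are very Zariski dense locally on $\W^G$, Lemma \ref{red} gives $\CX_D^{red}\cong \CX_D$ and $\CX_G^{red}\cong \CX_G$, so the closed immersion descends to $\CX_D\hookrightarrow \CX_G$. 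The hardest part of the argument is the careful bookkeeping at the primes above $p$ and at the primes dividing $\mathfrak{d}$: one has to match the classical $U_{\mathfrak{p}}$-eigenvalue arising from the adelic, projector-normalised Hilbert space $S_{k,r}^G(U_1(\mathfrak{n}\mathfrak{d}))$ with the one coming from the quaternionic definition, and to verify that the classical JL transfers preserve the exact Hecke normalisations used in Definition \ref{heckealg}; once this is done, divisibility and (outside a thin set) equality of characteristic polynomials follow from the representation-theoretic statement.
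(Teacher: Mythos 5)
The closed-immersion half of your argument matches the paper: same weight space $\W^G$, same Hecke algebra $\pmb{T}$ with no operators at primes dividing $\d$, same set $X$ of classical weights with $\CM^{cl}_{G,x}=S^G_{k,r}(U_1(\nn\d))$ and $\CM^{cl}_{D,x}=S^D_{k,r}(U_1(\nn))$, control theorems on both sides, classical JL for the characteristic-polynomial divisibility, and then Theorem~\ref{int}. That is exactly Theorem~\ref{thm:jl-interpolate}.

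The gap is in the reducedness step. You invoke ``multiplicity-one/strong-multiplicity-one theorems \ldots combined with the fact that at the $p$-adic places the $U_{\mathfrak{p}}$-operator acts on each local factor with distinct eigenvalues outside the finitely many weights where the characteristic polynomial has a repeated root.'' This is not a correct route to semisimplicity. Multiplicity one is orthogonal to the issue: the failure of $U_\gothp$ to be semisimple occurs on the $\gothp$-old part, where a single automorphic representation of prime-to-$\gothp$ conductor contributes a $2$-dimensional oldspace on which $U_\gothp$ has characteristic polynomial $X^2-a_\gothp X+N_{F/\QQ}(\gothp)^{r+1}\Psi(\gothp)$; the prime-to-$\gothp$ Hecke algebra cannot separate this space. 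Whether this quadratic has a double root is a condition on each eigenform, not a condition that cuts out a finite (or even clearly non--Zariski-dense) set of weights --- you do not justify that the bad locus is small, and I do not see how one would do so directly. The paper instead shows reducedness by applying Lemma~\ref{red} to the Zariski dense set of weights with $r$ large: using Lemma~\ref{7.8} to pass from $U_p$-slope $\leq h$ to $U_{\gothp_i}$-slope $\leq h$ for each $i$, then decomposing into $\gothp_i$-new (where $U_{\gothp_i}$ is normal) and $\gothp_i$-old parts, and observing that once $r>(2h-l_{\gothp_i})/l_{\gothp_i}$ the constant term $N_{F/\QQ}(\gothp_i)^{r+1}\Psi(\gothp_i)$ has valuation $>2h$, so at most one root of the quadratic has valuation $\leq h$ and $U_{\gothp_i}$ therefore acts as a scalar on the slope-$\leq h$ old part (Lemma~\ref{u}, Proposition~\ref{sem}). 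Your proposal would need to be repaired by replacing the distinct-eigenvalue claim with this valuation/slope argument (or an equivalent one); as written, the reducedness assertion is not established.
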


\begin{cor}\label{cor62}
	If $g=[F:\QQ]$ is even, then taking $D$  (totally definite) with $\d=1$, the closed immersion given by Theorem \ref{thm71} becomes an isomorphism.
\end{cor}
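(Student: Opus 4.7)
The plan is to deduce the corollary directly from Corollary~\ref{coro} of the Interpolation Theorem, by strengthening the divisibility of Hecke characteristic polynomials used to prove Theorem~\ref{thm71} to an equality. The point is that Theorem~\ref{thm71} already packages all the hard technical content (very Zariski density of classical weights, compatible classical structures on both sides, and reducedness of both eigenvarieties); when $\d=1$ there is nothing left to do beyond recording the symmetry of the Hecke action.

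First I would verify that a totally definite $D/F$ with $\disc D=1$ exists when $g$ is even. A quaternion algebra over $F$ is determined by its finite set of ramified places, whose cardinality must be even; total definiteness forces ramification at all $g$ archimedean places, so when $g$ is even we may take $D$ to be unramified at every finite place, giving $\d=1$. This is compatible with the running assumption $p \nmid \nn\d$ and with the representability hypothesis on the level, since $\nn$ is unchanged.

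Next, with $\d=1$ the classical Jacquet-Langlands correspondence supplies, at every classical algebraic weight $(k,r)\in\W^G$, a Hecke-equivariant \emph{isomorphism} $S_{k,r}^{D}(U_1(\nn)) \cong S_{k,r}^{G}(U_1(\nn))$: the $\d$-new/$\d$-old decomposition collapses because $\d=1$. Consequently, on the very Zariski dense subset $X\subset\W^G$ of classical weights used in the proof of Theorem~\ref{thm71}, and for every $t\in\pmb{T}$ and every $x\in X$, we obtain the equality
$$\det\!\bigl(1-\psi_D(tU_p)Y \,\big|\, \CM_{D,x}^{cl}\bigr) \;=\; \det\!\bigl(1-\psi_G(tU_p)Y \,\big|\, \CM_{G,x}^{cl}\bigr) \qquad \text{in } k(x)[Y],$$
rather than just the divisibility demanded by Theorem~\ref{int}.

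Applying Corollary~\ref{coro} then yields mutually inverse closed immersions between $\CX_D^{red}$ and $\CX_G^{red}$, hence an isomorphism $\CX_D^{red}\cong\CX_G^{red}$. Since both eigenvarieties are reduced by Theorem~\ref{thm71}, this upgrades to $\CX_D\cong\CX_G$, as required. There is no real obstacle beyond what Theorem~\ref{thm71} already provides; the only genuinely new ingredient is the parity remark securing a totally definite $D$ with $\d=1$, and the observation that divisibility of Hecke polynomials becomes equality by symmetry once this is in place.
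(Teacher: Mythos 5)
Your proof matches the paper's argument: the paper likewise observes that when $\d=1$ the classical Jacquet--Langlands correspondence yields a Hecke-module \emph{isomorphism} $\CM_{D,k,r}^{cl}\cong\CM_{G,k,r}^{cl}$ at each classical weight, hence equality (not merely divisibility) of the Fredholm/characteristic polynomials, and then applies Corollary~\ref{coro} to get $\CX_D^{red}\cong\CX_G^{red}$ before invoking reducedness (Proposition~\ref{sem}, Lemmas~\ref{7.8} and~\ref{u} feeding into Lemma~\ref{red}) to upgrade to an isomorphism of the eigenvarieties themselves. Your explicit parity check for the existence of a totally definite $D/F$ with $\d=1$ when $g$ is even is the same bookkeeping the paper leaves implicit, and citing Theorem~\ref{thm71} for reducedness is legitimate since that part of Theorem~\ref{thm71} is established independently of the corollary.
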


We will derive Theorem~\ref{thm71} from Theorem~\ref{int} (the Interpolation theorem). To this end, we need to exhibit a very Zariski dense set $X \subset \W^G$ on which we can put classical structures for both sets of eigenvariety data. The set of all classical weights (see Definition \ref{cw}) is such a candidate. The fact that it is a very Zariski dense subset of $\W^G$ is a well-known fact but we include its proof for the sake of completeness. This requires the following lemma.

\begin{lem}\label{ir} If\/ $W$ is a non-empty rigid space. Then $W$ is irreducible if and only if the only analytic subset $Z \subset W$ which set-theoretically contains a non-empty admissible open of\/ $W$ is $Z=W$.

\end{lem}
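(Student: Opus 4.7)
The plan is to prove both directions via the standard characterization of irreducibility for rigid analytic spaces: $W$ is irreducible precisely when every non-empty admissible open subset of $W$ is Zariski dense in $W$, equivalently when $W$ cannot be decomposed as a union of two proper analytic subsets. This equivalence is the main input and can be taken from the foundational treatments in \cite{bgr}; once it is in hand the proof is essentially formal.

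For the forward direction, I would assume $W$ is irreducible and let $Z \subseteq W$ be an analytic subset set-theoretically containing a non-empty admissible open $U \subseteq W$. By the above characterization, $U$ is Zariski dense in $W$, so its Zariski closure $\overline{U}$ equals $W$. Analytic subsets are by definition Zariski closed, so the inclusion $U \subseteq Z$ upgrades to $\overline{U} \subseteq Z$, and hence $W = \overline{U} \subseteq Z$, giving $Z = W$ as required.

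For the converse, I would argue by contrapositive. Suppose $W$ is not irreducible, so we may write $W = Z_1 \cup Z_2$ with both $Z_1, Z_2 \subsetneq W$ proper analytic subsets. Then $U := W \setminus Z_2$ is a non-empty Zariski open subset of $W$, and Zariski opens are always admissible opens in rigid geometry. The decomposition forces $U \subseteq Z_1$, so $Z_1$ is a proper analytic subset of $W$ set-theoretically containing the non-empty admissible open $U$, directly contradicting the stated property. Hence $W$ must be irreducible.

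I do not anticipate any serious obstacle: the only point that really needs external input is the equivalence between irreducibility and Zariski density of every non-empty admissible open, which is why I would simply cite \cite{bgr} at that step rather than reprove it. Everything else is a short formal manipulation of the definitions.
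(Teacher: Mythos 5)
Your argument is essentially circular, and the one citation that does the work points to the wrong source. The paper's entire proof of this lemma is the citation to \cite[Lemma 2.2.3]{conr99}, Conrad's paper on irreducible components of rigid spaces. Your proposal also reduces the statement to an external citation, but the "standard characterization" you attribute to \cite{bgr} --- that $W$ is irreducible if and only if every non-empty admissible open is Zariski dense --- \emph{is} the lemma: by the paper's Definition of Zariski density, "$U$ is Zariski dense in $W$" means precisely "every analytic subset of $W$ containing $U$ is all of $W$," so the displayed condition in the lemma is word-for-word "every non-empty admissible open is Zariski dense." Once you grant that characterization, there is nothing left to prove in the forward direction, and your detour through "Zariski closures" only restates the definition. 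The converse direction similarly presupposes the second characterization (no decomposition into two proper analytic subsets), which is another of Conrad's theorems, not an independent fact you can combine with the first.

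The citation itself is also incorrect. \cite{bgr} is cited in this paper only for the notion of \emph{analytic subset}; it does not develop a global theory of irreducibility for rigid spaces. That gap is exactly what Conrad's paper \cite{conr99} was written to fill. The substantive obstacle you are skipping over is that, unlike in scheme theory, the complement of an admissible open is in general \emph{not} an analytic subset, so the familiar argument "$Z \cup (W \setminus U)$ gives a proper decomposition" fails, and one cannot deduce Zariski density of admissible opens from the no-proper-decomposition property by formal manipulation. Conrad's actual proof passes to the normalization, uses its connectedness, and invokes an identity-theorem type statement for functions on integral affinoids; none of that is visible in your write-up. If you intend to cite the result rather than reprove it, the honest version is a one-line reference to \cite[Lemma 2.2.3]{conr99}, which is what the paper does.
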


\begin{proof}See \cite[Lemma 2.2.3]{conr99}.

\end{proof}

\begin{prop}Let $X$ be the set of classical weights, then $X$ is very Zariski dense in $\W^G$.

\end{prop}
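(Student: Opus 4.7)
The plan is to exploit the decomposition $\W^G \cong \bigsqcup_{\chi \in H^\vee} \W_\chi$ into connected components, each isomorphic to the open unit polydisc $B(1,1)^{g+1}$ and hence irreducible. Since this is a decomposition as rigid spaces, any admissible open $V \subset \W^G$ containing a point $x$ automatically lies inside the component $\W_\chi$ of $x$, so both Zariski density and very Zariski density reduce to working one component at a time. To see that $X$ meets every $\W_\chi$, note that twisting an algebraic weight by a finite-order character $\psi$ produces an arithmetic (classical) weight lying in the component determined by $\psi$, and the components are indexed by $H^\vee$, which is precisely the group of such torsion characters.

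Within a fixed $\W_\chi$, I would show that the classical weights are Zariski dense by a standard $p$-adic interpolation argument. After fixing the torsion twist, the algebraic part ranges over a sub-cone of $\ZZ^{g+1}$ (cut out by the parity and positivity constraints of Definition \ref{cw}) embedded into the open polydisc via a map of the shape $(v_1,\dots,v_g,r) \mapsto ((1+p)^{v_1},\dots,(1+p)^{v_g},(1+p)^r)$. A rigid analytic function on $B(1,1)^{g+1}$ is a convergent power series, and any such series vanishing on an unbounded sub-cone of $\ZZ^{g+1}$ must be identically zero, by iterated one-variable Weierstrass preparation (or equivalently by a Mahler coefficient computation on each coordinate).

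To upgrade Zariski density to very Zariski density, let $x \in X$ and let $V \subset \W^G$ be an irreducible admissible affinoid open containing $x$; then $V \subset \W_\chi$ for the component $\W_\chi$ of $x$. Choosing an admissible open polydisc $B \subset V$ around $x$, the same interpolation argument applied to the $x$-translated cone of algebraic weights shows that classical weights are Zariski dense in $B$. The analytic Zariski closure of $V(\CC_p)\cap X$ in $V$ therefore contains the admissible open $B$, and Lemma \ref{ir} together with the irreducibility of $V$ forces this closure to equal $V$. The only genuinely non-trivial step is the interpolation statement in the middle paragraph; the main bookkeeping concern is simply to check that the constraints on classical weights still leave an unbounded sub-cone in every coordinate direction, which is enough for the power-series vanishing argument to conclude.
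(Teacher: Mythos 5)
Your proof follows essentially the same route as the paper's: decompose $\W^G$ into its components $\W_\chi \cong B(1,1)^{g+1}$, reduce via Lemma \ref{ir} to showing density of classical weights in a small polydisc around $x$, and observe that the algebraic weights land on a translate of a full-rank sublattice of $\ZZ^{g+1}$ under $(w,r)\mapsto((1+p)^{w_1},\dots,(1+p)^{w_g},(1+p)^r)$. The only difference is cosmetic: the paper's proof ends by noting the ball contains infinitely many classical points "hence we get the result," whereas you spell out the justification (a power series on the polydisc vanishing on an unbounded sublattice in every coordinate direction is identically zero), which is the correct reason and a reasonable thing to make explicit.
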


\begin{proof}This is a simple generalization of \cite[Proposition 6.2.7]{chen2} or \cite[Lemma 4.1]{Ram}. By  \ref{wtspstr} we have $$\W^G \cong H^{\vee} \times B(1,1)^{g+1} \cong \bigsqcup_\chi \W_\chi,$$ where we index over the elements of  $H^\vee $.
	Let $\k_{\psi}$ be a classical weight with  $\kappa=2w-r$ and $\psi$ as  Definition \ref{cw}. Then under the above isomorphism $$\kappa_{\psi} \mapsto \left (\wh{\kappa}_{\psi}, \left [\prod_j^g (1+p)^{w_j} \right ], (1+p)^{r} \right ),$$ where $\wh{\kappa}_{\psi }$ denotes the restriction to  $H^\vee $ (note that $\k_{\psi} \in \W^{G}(E)$, with $E=\QQ_p[\psi]$). Assume that $\kappa_{\psi} \in \W_\chi$ for some $\chi$ and take any irreducible admissible affinoid open $V \subset \W^G$ that contains $\kappa_{\psi}$. Then  $V \subset \W_\chi$ and moreover, since $V(E)$ is open, there exists $\pmb{s}=(\un{s},s') \in \QQ_{>0}^{\Sigma_\infty} \times \QQ_{>0}$ such that  the closed ball of radius $\pmb{s}$ around $\k_{\psi}$ is contained in $V$, i.e., $$B[\kappa_{\psi},\pmb{s}]:=\prod_{j}^g B[w_j,s_j] \times B[r,s'] \subset V. $$
	By Lemma \ref{ir}, we see that if $B[\kappa_{\psi },\pmb{s}] \cap X$ is Zariski dense in $B[\kappa_{\psi },\pmb{s}]$, then $V(E) \cap X$ is Zariski dense in $V$, which is what we want to prove. So we are reduced to showing that  $B[\k_{\psi },\pmb{s}](E) \cap X$ is Zariski dense in $B[\k_{\psi},\pmb{s}]$. To see this, let $\kappa,\kappa' \in X$, then
	
	\begin{align*}
	|\kappa -\kappa'|&= \max_{i} \{ |(1+p)^{w_i}-(1+p)^{w'_i}|_p,  |(1+p)^{r}-(1+p)^{r'}|_p \} \\
	&= \max_i  \{ |(1+p)^{w_i - w'_i}-1|_p,  |(1+p)^{r-r'}-1|_p \}.
	\end{align*}
	So taking $\kappa'_{\psi} \in X$, with $\k'=2w'-r'$ is such that:
	\begin{itemize}
		\item for $N$ large enough, $w_i \equiv w'_i \mod (p-1)p^N$, $r \equiv r' \mod (p-1)p^N$;
		\item $\k_{\psi}$ and $\k'_{\psi}$ lie in the same component of the weight space.
	\end{itemize}
	Then we can easily see that $B[\k_{\psi},\pmb{s}]$ contains infinitely many elements of $X$, hence we get the result.
\end{proof}

	\begin{defn}
	Let $\CZ \subset \W^G \times \AA^1$ be the spectral variety defined by $\Fred_\CM(U_p)$ for $\CM$ (as usual) the coherent sheaf on $\W^G$ of overconvergent Hilbert modular forms on $D$ or $M_2(F)$ with a classical structure $\CM^{cl}$. We call a point $z \in \CZ$ classical if its projection to $\W^G$ is a classical weight and if  $\det(1-TU_p|_{\CM^{cl}})$ vanishes at $z$. We denote these points by $\CZ^{cl}$.
\end{defn}

\begin{rmrk}
	Note that if $\CM$ is given by the `spreading out' of the spaces of overconvergent Hilbert modular forms. Moreover,  $z=((\kappa_z,r_z),\a) \in \W^G \times \AA^1$  is classical if $(\k_z,r_z)$ is a classical weight and there exists a classical Hilbert modular form with $U_p$ eigenvalue $\a^{-1}$.
\end{rmrk}

\begin{prop}\label{handens}
	The subset $\CZ^{cl}$ of\/ $\CZ$ is a very Zariski dense subset.
\end{prop}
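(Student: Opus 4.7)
The strategy is to first establish Zariski density of $\CZ^{cl}$ inside each slope-adapted affinoid $\CZ_{\V,h}$ (which together form an admissible cover of $\CZ$), and then bootstrap to very Zariski density using the admissible cover and the irreducibility criterion of Lemma~\ref{ir}. Recall that each slope-adapted $\CZ_{\V,h}\to\V$ is finite and flat, which gives the dimension-preserving control needed to transfer density across this projection.

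I work inside a fixed $\CZ_{\V,h}$ with projection $\pi\colon\CZ_{\V,h}\to\V$. By the preceding proposition, the set of classical weights is very Zariski dense in $\W^G$, hence in $\V$. For any $M\geq 0$, the subset $X_M\subset\V$ of classical algebraic weights $(k,r)$ with $k_j>M$ for every $j\in\Sigma_\infty$ is obtained from the full classical locus by removing the conditions $k_j\leq M$, each of which cuts out a proper analytic subset of $\V$; consequently $X_M$ remains very Zariski dense in $\V$. I then choose $M=M(h,\V)$ large enough that every $(k,r)\in X_M$ satisfies the control-theorem small-slope condition
\[
h<\frac{v_{\ps_i}(k,r)+\min_{j\in\Sigma_{\ps_i}}\{k_j-1\}}{e_{\ps_i}}\quad\text{for every }i\in\Sigma_p.
\]
By Theorem~\ref{AIPeig}(e) in the Hilbert case and the Control Theorem of Section~\ref{Hecke} in the quaternionic case, every $U_p$-slope-$\leq h$ overconvergent eigenform at such a weight is then classical (the $U_p$-slope bounds each individual $U_{\ps_i}$-slope), so the entire fibre of $\pi$ over each weight in $X_M$ lies in $\CZ^{cl}$. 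Since $\pi$ is finite flat, the image of a proper analytic subset of $\CZ_{\V,h}$ is a proper analytic subset of $\V$; hence $\pi^{-1}(X_M)$ is Zariski dense in $\CZ_{\V,h}$, and a fortiori so is $\CZ^{cl}\cap\CZ_{\V,h}$.

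For very Zariski density in $\CZ$, fix $z\in\CZ^{cl}$ and an irreducible admissible affinoid open $\mathcal{U}\subset\CZ$ containing $z$. Pick a finite subcover of $\mathcal{U}$ by slope-adapted affinoids $\CZ_{\V_i,h_i}$ and set $\mathcal{U}_i=\mathcal{U}\cap\CZ_{\V_i,h_i}$. If $Y\subset\mathcal{U}$ is an analytic subset containing $\mathcal{U}\cap\CZ^{cl}$, then the previous step applied on each $\CZ_{\V_i,h_i}$---using that $X_{M_i}$ is very Zariski dense in $\V_i$ and so remains Zariski dense on the admissible open $\pi_i(\mathcal{U}_i)\subset\V_i$---forces $Y\cap\mathcal{U}_i=\mathcal{U}_i$ for every $i$; the admissible covering together with Lemma~\ref{ir} then yields $Y=\mathcal{U}$.

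The main technical obstacle is precisely this last transfer: ensuring Zariski density descends from the slope-adapted affinoid $\CZ_{\V_i,h_i}$ to its admissible open $\mathcal{U}_i$. This is why it is essential to produce the \emph{very} Zariski density (and not merely Zariski density) of the good classical weights $X_M$ in $\V$ in the second step, since it is only the stronger property that is preserved under restriction to admissible opens such as $\pi_i(\mathcal{U}_i)$, and hence only the stronger property that survives the bootstrapping argument.
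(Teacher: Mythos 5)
Your proof takes essentially the same route as the paper, which simply defers to Chenevier's Proposition 3.5 with a one-line summary: use very Zariski density of $X$ and $X_h$ together with finite flatness of the slope-adapted cover. You correctly identify all of these ingredients and the overall two-step architecture is right. However, there is a step in the middle that is false as written and needs repair.

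The claim ``Since $\pi$ is finite flat, the image of a proper analytic subset of $\CZ_{\V,h}$ is a proper analytic subset of $\V$'' fails when $\CZ_{\V,h}$ is reducible. A single irreducible component $C$ is a proper analytic subset of $\CZ_{\V,h}$, yet (by flatness) $\dim C = \dim \V$, so $\pi|_C$ is a finite morphism between irreducibles of the same dimension and hence surjective; thus $\pi(C) = \V$ is not proper. The conclusion you want --- that $\pi^{-1}(X_M)$ is Zariski dense in $\CZ_{\V,h}$ --- is still true, but the correct argument is component-by-component: if an analytic $A\supsetneq\pi^{-1}(X_M)$ were proper, pick a component $C$ with $C\not\subset A$; then $\pi|_C$ is finite surjective, $\dim(A\cap C)<\dim\V$, so $\pi(A\cap C)$ is a proper analytic subset of $\V$, yet it contains all of $X_M$ because $\pi|_C^{-1}(x)\neq\emptyset$ and lies in $A\cap C$ for every $x\in X_M$ --- contradicting Zariski density of $X_M$. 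A similar imprecision appears in the bootstrap: ``very Zariski density of $X_{M_i}$ in $\V_i$'' only yields density inside an \emph{irreducible} admissible affinoid open containing a point \emph{of $X_{M_i}$}; since $\pi_i(\mathcal U_i)$ need not be irreducible and $\pi_i(z)$ need not lie in $X_{M_i}$ (only in $X$), one must first shrink to an irreducible affinoid $V'\ni\pi_i(z)$ inside $\pi_i(\mathcal U_i)$ and then use the proof of the very-Zariski-density of $X$ (not merely its statement) to see that $X_{M_i}\cap V'$ is still Zariski dense in $V'$, before running the flat transfer. (Your justification that $X_M$ is very Zariski dense because ``$k_j\le M$ cuts out a proper analytic subset'' is also loose, since that locus is a discrete set, not an analytic subvariety; the correct reason is the explicit ball argument in the proof of Proposition 5.4, where large $k_j$ can always be achieved.) These are repairable, but as written they are gaps rather than proofs.
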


\begin{proof}
	This follows from the proof of \cite[Proposition 3.5]{chenjlc}. The basic idea is to use the fact that $X$ and $X_h$ are very Zariski dense, together with the fact that the admissible cover of $\CZ$ as given by \cite[Section 4]{buzeig} is finite flat over its projection to weight space.
\end{proof}

Let $\nn$ be an ideal of $\OO_F$ with $(\nn,\d)=1$ and $\pi \nmid \nn\d$, where $\d=\disc(D)$. Let  $\K^D=U_1(\nn \pi)$ and set $\pmb{T}^D(\K^D)$ to be the Hecke algebra.\footnote{Note that the Hecke algebra consists of all Hecke operators away from $\d$.} Let $U^G$ be the corresponding level structure when one takes $D=M_2(F)$, which gives the level structure in the Hilbert modular form case. By fixing a splitting at places away from $\d$, we let $\pmb{T}^D$ act on the spaces of Hilbert modular forms. Therefore, throughout this section we denote $\pmb{T}^D$ simply by $\pmb{T}$.

\begin{thm}[\textit{The Jacquet--Langlands correspondence}]\label{thm:jl}Let $(k,r)$ be a classical weight in $\W^G$ and $U^D,U^G$ as above. There is an isomorphism \[S_{k,r}^{D}(\K^D) \overset{\sim}\to S_{k,r}^{\d \text{-new}}(U^G \cap U_1^G(\d))).\]
\end{thm}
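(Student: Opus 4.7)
The strategy is to reduce the statement to the representation-theoretic Jacquet--Langlands correspondence and then match levels. Using strong multiplicity one and a standard decomposition by cuspidal automorphic representations, one obtains an isomorphism of $\pmb{T}$-modules
\[
S_{k,r}^D(\K^D) \cong \bigoplus_{\pi^D} (\pi^D_f)^{\K^D},
\]
where $\pi^D$ runs over cuspidal automorphic representations of $G_D(\AA)$ whose archimedean component corresponds to the finite-dimensional algebraic representation of $(D \otimes_\QQ \RR)^\times$ determined by the weight tuple $(k,r,n,v,w)$; and similarly, $S_{k,r}^{\d\text{-new}}(U^G \cap U_1^G(\d))$ decomposes over cuspidal automorphic representations $\pi$ of $\GL_2(\AA_F)$ of matching archimedean type that are $\d$-new.

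Next, I would invoke the classical Jacquet--Langlands correspondence, which provides a Hecke-equivariant bijection between cuspidal automorphic representations of $G_D(\AA)$ and the set of cuspidal automorphic representations of $\GL_2(\AA_F)$ whose local component at each $\vv | \d$ is square-integrable (i.e.\ a twist of Steinberg). Since $\mathrm{JL}$ preserves local components at all places where $D$ splits, the archimedean types (hence the weights) and the Hecke eigenvalues at all primes $\gothq \nmid \d$ are matched automatically. It then remains to check that on both sides the prescribed levels produce the correct dimensions summand-by-summand. At a place $\vv | \d$, $\K^D$ is the full maximal compact $\OO_{D,\vv}^\times$, so $(\pi^D_\vv)^{\K^D_\vv}$ is one-dimensional for every automorphic $\pi^D$. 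On the $\GL_2$-side, the Iwahori-level invariants of a (twist of) Steinberg are also one-dimensional and, by the old/new decomposition of Casselman, contribute precisely to the $\vv$-new part at $\vv$. Taking the intersection over all $\vv | \d$ identifies the image with the $\d$-new subspace on the right.

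The main obstacle I foresee is bookkeeping rather than conceptual depth: one must verify that the concrete definition of $S_{k,r}^D(\K^D)$ using the coefficient system $V_{n,v}(L)$ from Definition \ref{vk} and of the Hilbert side via the geometric construction of \cite{AIP} indeed recover the correct archimedean/algebraic types with the correct central characters (the role of $r$ and the constraint $k = 2w-r$), and that the Hecke operators $T_\gothq, S_\gothq, U_\ps$ as defined in Sections \ref{eig} and \ref{totdef} agree with the standard adelic Hecke operators used in the representation-theoretic formulation of $\mathrm{JL}$. Given the extensive literature on the Jacquet--Langlands correspondence for Hilbert modular forms (notably Hida's work), I would cite a clean formulation of the representation-theoretic statement and devote the original content to checking that the normalizations of level structures, coefficient modules, and Hecke operators used here line up with the cited version.
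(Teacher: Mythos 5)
The paper does not actually prove this theorem: it is stated with the title ``The Jacquet--Langlands correspondence'' and invoked as a known classical result, with no proof or even an explicit citation appearing in the body of the text; the remark immediately following simply unpacks its consequences for even and odd $g$. The paper's original contribution is interpolating this theorem, not establishing it. Your proposal, on the other hand, is a genuine (if compressed) proof sketch along the standard lines, so the comparison is really ``sketch of proof'' versus ``cited black box.''

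That said, your sketch is essentially the correct route: decompose both sides over cuspidal automorphic representations via strong multiplicity one, transfer by the representation-theoretic Jacquet--Langlands correspondence, and match levels place by place using Casselman's theory of conductors and newvectors on the $\GL_2$ side. One small point worth tightening: the claim that $(\pi^D_\vv)^{\K^D_\vv}$ is one-dimensional ``for every automorphic $\pi^D$'' at $\vv\mid\d$ is not literally true. Since $\K^D_\vv=\OO_{D,\vv}^\times$ and $D_\vv^\times/\OO_{D,\vv}^\times\cong\ZZ$, the fixed space is nonzero (hence one-dimensional) only when $\pi^D_\vv$ is an unramified character of $D_\vv^\times$; these are precisely the local components that transfer under local JL to unramified twists of Steinberg, which in turn are the Iwahori-spherical, $\vv$-new representations on the $\GL_2$ side. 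So the restriction to unramified-character local components at $\vv\mid\d$ is exactly what matches the $\d$-new condition, and should be stated as such rather than claimed to hold for every $\pi^D$. With that correction, and the normalization bookkeeping you already flag (matching the coefficient module $V_{n,v}$ and the central character determined by $r$ with the archimedean data, and checking the Hecke normalizations), your outline is a faithful account of how one would prove this theorem, even though the paper itself does not undertake it.
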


	\begin{rmrk}\label{rem:jl}		
	We note that, for $g$ even, we can pick the quaternion algebra $D$ to be totally definite with $\d = 1$. Now, by fixing a splitting we can identify $U^D$ and $U^G$, which we will simply denote by $U$. In this case the classical Jacquet-Langlands correspondence gives an isomorphism of Hecke modules $S_{k,r}^{D}(\K) \overset{\sim}\to S_{k,r}(\K).$ However, for $g$ odd, since $D$ is totally definite, we must have $\d \neq 1$. In this case, we have an isomorphism of Hecke modules $$S_{k,r}^{D}(\K^D) \overset{\sim}\lra S_{k,r}^{\d \text{-new}}(U^G(\d)) \hookrightarrow S_{k,r}(U^G(\d)), $$ where $U^G(\d)=U^G \cap U_1^G(\d)$.
\end{rmrk}

	Theorem \ref{thm71} then follows from Theorem \ref{thm:jl-interpolate} (below) together with Lemma \ref{u}:

\begin{thm}\label{thm:jl-interpolate} Let $\CX_G$ and $\CX_D$ be the eigenvarieties associated to the eigendata $$\mathfrak{D}_1=(\W^G,S^{G,\dag}(\K^G(\d)),\pmb{T},U_p) \text{ and  }\mathfrak{D}_2=(\W^G,S^{D,\dagger}(U^D),\pmb{T},U_p)$$ respectively and let $p$ be unramified. Then we can interpolate the classical Jacquet-Langlands correspondence and obtain a closed immersion $\iota_D:\,\CX_D^{red} \hookrightarrow \CX_G^{red}$. 
	
\end{thm}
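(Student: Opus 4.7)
The plan is to apply Chenevier's Interpolation Theorem (Theorem \ref{int}) directly. For this I must exhibit a very Zariski dense subset $X \subset \W^G$ on which I can define classical structures for both pieces of eigendata $\mathfrak{D}_1$ and $\mathfrak{D}_2$, and then verify the divisibility of the characteristic polynomials of $tU_p$ at each point of $X$ in the direction that gives $\iota_D:\CX_D^{red} \hookrightarrow \CX_G^{red}$ (so the quaternionic polynomial must divide the Hilbert one).

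For $X$ I take the set of classical algebraic weights, whose very Zariski density in $\W^G$ is the content of the proposition above. At a classical weight $(k,r)$ I define the classical structure on the Hilbert side to be the finite-dimensional $\pmb{T}$-submodule of classical cusp forms $S_{k,r}^G(\K^G(\d)) \subset S_{k,r}^{G,\dagger}(\K^G(\d))$, and the classical structure on the quaternionic side to be $S_{k,r}^D(\K^D) \subset S_{k,r}^{D,\dagger}(\K^D)$. To check that these meet the requirements of Definition \ref{51}, I invoke Theorem \ref{AIPeig}(e) and the Control Theorem of Section \ref{Hecke}: since $p$ is unramified both theorems supply the same small-slope criterion, namely that a classical-weight eigenform with $U_\ps$-slope strictly less than $v_{\ps}(k,r) + \min_{j \in \Sigma_\ps}\{k_j-1\}$ for every $\ps \mid p$ is automatically classical. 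For any fixed $h \in \RR$, the classical weights meeting this threshold form a subset of $X$ which remains Zariski dense in any admissible affinoid $V \subset \W^G$ that meets $X$, since inside such a $V$ one can find infinitely many classical weights of arbitrarily large regular $k$. Hence $X_h \cap V$ and $X \cap V$ have the same Zariski closure in $V$, exactly the condition required.

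For the divisibility at a classical point $(k,r) \in X$, I invoke the classical Jacquet--Langlands correspondence (Theorem \ref{thm:jl}) to produce a Hecke-equivariant injection
\[
S_{k,r}^D(\K^D) \overset{\sim}\lra S_{k,r}^{\d\text{-new}}(\K^G(\d)) \hookrightarrow S_{k,r}(\K^G(\d)).
\]
Since $p \nmid \d$, the fixed splittings $D_p \cong M_2(F_p)$ identify the $U_\ps$ on either side, and since $p$ is unramified we have $U_p = \prod_{\ps\mid p} U_\ps$ on both; the injection therefore intertwines the whole of $\pmb{T}$ together with $U_p$. Consequently
\[
\det\bigl(1-\psi_D(tU_p)Y \mid S_{k,r}^D(\K^D)\bigr) \text{ divides } \det\bigl(1-\psi_G(tU_p)Y \mid S_{k,r}(\K^G(\d))\bigr)
\]
in $k(x)[Y]$ for every $t \in \pmb{T}$ and every $x \in X$. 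Applying Theorem \ref{int} with the quaternionic eigendata in the role of $\mathfrak{E}_1$ and the Hilbert eigendata in the role of $\mathfrak{E}_2$ then yields the desired closed immersion $\iota_D : \CX_D^{red} \hookrightarrow \CX_G^{red}$.

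I expect the principal obstacle to be the verification of the classical structure condition in Definition \ref{51}: although the control theorems supply a clean numerical criterion, one must confirm that the small-slope locus $X_h$ is Zariski dense in \emph{every} admissible affinoid neighbourhood of \emph{every} classical weight, and this depends crucially on our using the weight space $\W^G$ of the correct dimension $g+1$ (rather than a larger weight space with spurious extra directions of the sort discussed in Remark \ref{wtsp}). A secondary, more clerical, subtlety is the matching of Hecke normalizations between the two sides --- in particular, the compatibility of uniformisers at primes above $p$ fixed in Notation \ref{hecke} --- which is precisely what Lemma \ref{u} is invoked for in the reduction of Theorem \ref{thm71} to the present statement.
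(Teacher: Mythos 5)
Your proposal reproduces the paper's argument: take the set of classical weights as the very Zariski dense subset, define the classical structures via the classical cusp form spaces $S_{k,r}(U^G(\d))$ and $S_{k,r}^D(\K^D)$, appeal to the control theorems (Theorem \ref{AIPeig}(e) on the Hilbert side and the Control Theorem in \S\ref{Hecke} on the quaternionic side) to verify Definition \ref{51}, and use the classical Jacquet--Langlands correspondence for the divisibility hypothesis of Theorem \ref{int}. One small correction to your closing remark: Lemma \ref{u} is not about matching Hecke normalizations at $p$; it establishes the semisimplicity of the $U_{\gothp_i}$ operators on classical slope-$\leq h$ spaces, which is what feeds into Lemma \ref{red} to give the reducedness claim of Theorem \ref{thm71}.
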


	\begin{proof}We will prove this using Theorem \ref{int}. Let $X$ be the set of classical weights, whose elements we will denote by $k$.  We now define classical structure on $X$. For each $(k,r) \in X$, let $\CM_{G,k,r}^{cl}$ and $\CM_{D,k,r}^{cl}$ be the $\pmb{T}$-modules $S_{k,r}(U^G(\d))$ and $S_{k,r}^{D}(\K^D)$ respectively of classical cusp forms of weight $k$ and level $U^G(\d),U^D$ respectively.  We need to check that this is indeed a classical structure.
	Pick $h \in \RR_{\geq 0}$. Then the set of $(k,r) \in X$ such that $ S_{k,r}^{G,\dagger}(U^G(\d))^{\leq h} \subset \CM_{G,k,r}^{cl}$ contains all $(k,r) \in \W^G$, such that $h < v_p(k,r) + \min_{i \in \Sigma_\infty} \{k_i-1 \}$ by the Control Theorem and hence satisfies the properties of Definition \ref{51}. Recall that the superscript $\leq h$ denotes  slope decomposition with respect to $U_p$.
	
	Similarly, if $(k,r) \in X$ is such that\footnote{Note that we are in the case where $p$ is unramified.}  $h < v_{p}(k,r)+ \min_{i \in \Sigma_\infty} \{k_i-1 \},$ then $ S_{k,r}^{D,\dagger}(\K^D)^{\leq h} \subset \CM_{D,k,r}^{cl}.$ It follows that we again have a classical structure. Now, as a consequence of  the classical Jacquet-Langlands correspondence we have that $\det \left (1-U_pX_{\mid \CM_{D,k,r}^{cl}} \right )$ divides $\det \left ( 1-U_pX_{\mid \CM_{G,k,r}^{cl}}\right ).$ Hence we can apply Theorem \ref{int} to obtain the closed immersion  $\iota_D:\,\CX_D^{red} \hookrightarrow \CX_G^{red}$. 
	
\end{proof}

Now observe that if $g$ is even, then we can pick $D$ to be totally definite {\it and} have $\d=1$ (i.e. trivial discriminant). Then the classical Jacquet-Langlands correspondence gives that $\CM_{G,k,r}^{cl} \cong \CM_{D,k,r}^{cl}$ at classical weights, and thus $$\det(1-U_pX\mid \CM_{D,k,r}^{cl})=\det(1-U_pX\mid \CM_{G,k,r}^{cl})$$ therefore Corollary \ref{coro} gives us an isomorphism $\CX_D^{red} \cong \CX_G^{red}$. This proves most of Corollary \ref{cor62}, it only remains to show that the eigenvarieties are reduced which we will follow from Lemma \ref{red}.

\begin{prop}\label{sem} Fix $h \in\RR_\geq 0$. There is a Zariski dense subset $X' \subset X$  (of\/ $\W^G$) such that for all $k \in X'$, the\/ $\pmb{T}$-module $\CM^{cl,\leq h}_{G,k,r}$ is semisimple. 
\end{prop}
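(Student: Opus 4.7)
The plan is to reduce the semisimplicity of $\CM^{cl,\leq h}_{G,k,r}$ to a discriminant condition on the weight space and then show that this discriminant is not identically zero. First I would cover $\W^G$ by irreducible admissible affinoid opens $V$ and, on each $V$, consider the finite projective $\OOO(V)$-module $M_V := S_V^{G,\dagger}(U^G(\d))^{\leq h}$ coming from the slope-adapted admissible cover of the spectral variety. By the Control Theorem (Theorem \ref{AIPeig}(e)), for every classical $(k,r) \in V$ with $h < v_\gothp(k,r) + \min_{j \in \Si_\gothp}\{k_j-1\}$ for all $\gothp \mid p$, the fiber $M_V|_{(k,r)}$ coincides with $\CM^{cl,\leq h}_{G,k,r}$. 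Since such weights are themselves Zariski dense in $V$, it suffices to find a Zariski dense subset of them at which $M_V|_{(k,r)}$ is $\pmb{T}$-semisimple.

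Next I would use Atkin--Lehner newform theory for Hilbert modular forms to obtain, at each such classical $(k,r)$, a $\pmb{T}$-equivariant decomposition
\[ \CM^{cl}_{G,k,r} = \bigoplus_{f} M_f, \]
indexed by Hilbert newforms $f$ of weight $(k,r)$ and of some level $\mathfrak{m}$ dividing the full level, where $M_f$ is the oldspace generated by $f$. On each $M_f$ the Hecke operators $T_\gothq$ at primes $\gothq \nmid \nn\d p$ act as the scalars $a_\gothq(f)$, and for each $\gothp \mid p$ at which $f$ is $\gothp$-old the operator $U_\gothp$ acts on a two-dimensional subfactor with characteristic polynomial $X^2 - a_\gothp(f) X + c_\gothp(k,r)$, where $c_\gothp(k,r)$ is the weight-dependent norm factor coming from $\gothp$-level raising. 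Since the $U_\gothp$ commute, $\pmb{T}$ acts semisimply on $M_f$ precisely when each such quadratic is separable, i.e.\ when the discriminant $\Delta_{f,\gothp}(k,r) := a_\gothp(f)^2 - 4 c_\gothp(k,r)$ is nonzero. Globalizing, the discriminant of the relevant factor of the characteristic polynomial of $U_\gothp$ on $M_V$ defines an element $\Delta_{V,\gothp} \in \OOO(V)$, and outside the Zariski closed set $Z := \bigcup_{\gothp \mid p} \{ \Delta_{V,\gothp} = 0 \}$ the full Hecke algebra acts semisimply on $M_V|_{(k,r)}$.

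The main obstacle is showing that each $\Delta_{V,\gothp}$ is not identically zero on $V$. The key observation is a valuation comparison: the slope bound $v_p(a_\gothp(f)) \leq h$ forces $v_p(a_\gothp(f)^2) \leq 2h$, while $v_p(c_\gothp(k,r))$ grows linearly in the weight via the factor $v_\gothp(k,r) + \min_{j \in \Si_\gothp}\{k_j - 1\}$. Consequently, for any classical $(k_0,r_0) \in V$ with this latter quantity sufficiently large for every $\gothp \mid p$ (which exists since $V$ meets the Zariski dense set of classical weights of arbitrarily large weight), we obtain $v_p(a_\gothp(f)^2) < v_p(4 c_\gothp(k_0,r_0))$ for every newform $f$ contributing to $M_V$, forcing $\Delta_{f,\gothp}(k_0,r_0) \neq 0$. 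Thus $\Delta_{V,\gothp} \not\equiv 0$ on $V$, and taking $X'$ to be the union, over an affinoid cover of $\W^G$ by such $V$, of the classical weights in each $V$ that both satisfy the Control Theorem bound and lie outside $Z$ yields the desired Zariski dense subset.
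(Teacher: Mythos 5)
Your core computation is the same as the paper's: decompose the slope-$\leq h$ classical space via Atkin--Lehner newform theory, reduce to the $2$-dimensional $\gothp$-old blocks where $U_\gothp$ has characteristic polynomial $X^2 - a_\gothp(f)X + c_\gothp(k,r)$ with $v_p(c_\gothp(k,r))$ growing linearly in the weight, and then observe that for $r$ large this forces $U_\gothp$ to act semisimply. The paper phrases the conclusion slightly differently --- for $r$ large, the two roots cannot both have valuation $\leq h$, so on the slope-$\leq h$ subspace $U_\gothp$ acts as a scalar --- whereas you aim to show the discriminant $a_\gothp(f)^2 - 4c_\gothp$ is nonzero, but these are two sides of the same valuation computation. (One mild sloppiness: $v_p(a_\gothp(f)) \leq h$ is not literally ``the slope bound,'' which concerns the $U_\gothp$-eigenvalues, i.e.\ the roots of the quadratic, not its trace; it follows for contributing $f$ when $r$ is large, but only because the two roots then have distinct valuations, which is really the thing you want anyway.)

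Where you genuinely depart from the paper --- and where there is a gap --- is in globalizing the discriminant to an element $\Delta_{V,\gothp} \in \OOO(V)$ whose vanishing locus is Zariski closed. The ``relevant factor of the characteristic polynomial of $U_\gothp$ on $M_V$'' is not a well-defined polynomial over $\OOO(V)$: the decomposition into $\gothp$-old blocks indexed by newforms $f$ is performed fiber by fiber and does not interpolate to a factorization of the characteristic polynomial on the family $M_V$. (And the discriminant of the \emph{full} characteristic polynomial is the wrong object --- it typically vanishes identically, since distinct newforms can share a $U_\gothp$-eigenvalue.) Fortunately this globalization is unnecessary: the pointwise version of your valuation argument already shows semisimplicity at every classical weight with $r$ sufficiently large, and such weights are Zariski dense in $\W^G$. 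Replacing the discriminant-locus $Z$ by the direct ``take $r$ large'' statement --- which is exactly what the paper does --- repairs the proof without changing its substance.
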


This result will be consequence of the two lemmas below. We begin by noting that the classical Jacquet-Langlands correspondence gives us that if $\CM_{G,k,r}^{cl,\leq h}$ is a semisimple $\pmb{T}$-module, then so is $\CM_{D,k,r}^{cl,\leq h}$. To ease notation, we let \[R_{k,r}^h:=\CM_{G,k,r}^{cl,\leq h}= S_{k,r}(U^G(\d))^{\leq h}.\] Now, since we are working with classical Hilbert modular forms, the action of the Hecke operators can be described by their action on $q$-expansions. Next we note that the only Hecke operators that might not be semisimple are the $U_{\gothp_i}$, for $p\OO_F=\prod_i \gothp_i$. This is because all the other operators are normal (commute with their adjoints), so they are semisimple. Hence we must  show that for each $i$, the operators $U_{\gothp_i}$ act semisimply on the space of cusp forms of slope-$\leq h$. In fact we shall show that $U_{\gothp_i}$ acts semisimply on $R_{k',r'}^{ h}$ for a Zariski dense subset of  $ X' \subset X$.  Lastly, we need to relate slope decomposition of $R_{k,r}^{h}$ with respect to $U_p$, to the slope decompositions with respect to the $U_{\gothp_i}$. To do this we have the following:

	\begin{lem}\label{7.8}
	Let $S$ be a Banach space on which we have pairwise commuting operators\/ $U_i$ for $i=1,\dots,n$, all of which have operator norm $\leq 1$ (which means they have positive slopes) and such that\/ $U=\prod_i U_i$ is a compact operator. Then the slopes of the $U_i$ operators acting on the space\/ $S^{\leq h}$ (this is the slope decomposition with respect to $U$) are all $\leq h$.
\end{lem}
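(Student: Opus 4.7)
The plan is straightforward once one observes that $S^{\leq h}$ is finite-dimensional. Since $U=\prod_i U_i$ is compact, the slope decomposition theory (as used throughout the paper, e.g.\ via \cite{buzeig}) gives a $U$-stable direct sum decomposition $S = S^{\leq h} \oplus S^{> h}$ with $S^{\leq h}$ of finite dimension, and characterized as the maximal subspace on which $U$ acts with generalized eigenvalues of $p$-adic valuation $\leq h$. Because each $U_i$ commutes with $U$, each $U_i$ preserves this decomposition, so in particular each $U_i$ acts on the finite-dimensional space $S^{\leq h}$.

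Next I would use simultaneous triangularization. The operators $\{U_i\}$ pairwise commute on the finite-dimensional space $S^{\leq h}$, so after enlarging the ground field to an algebraic closure we can find a basis in which all the $U_i$ are upper triangular. Fix such a basis vector $v$, let $\lambda_i$ be the diagonal entry (the generalized $U_i$-eigenvalue) attached to $v$, and observe that the diagonal entry of $U = \prod_i U_i$ on $v$ is then $\prod_i \lambda_i$. Since $v \in S^{\leq h}$, by the defining property of the slope decomposition we must have
\[
\sum_{i=1}^n v_p(\lambda_i) \;=\; v_p\!\left( \prod_i \lambda_i \right) \;\leq\; h.
\]

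Finally, the hypothesis $\|U_i\| \leq 1$ forces $v_p(\lambda_j) \geq 0$ for every $j$: any generalized eigenvalue of an operator of norm $\leq 1$ on a non-archimedean Banach space has non-negative valuation. Combining with the displayed inequality gives $v_p(\lambda_i) \leq h$ for each $i$, which is exactly the assertion that $U_i$ acts on $S^{\leq h}$ with slopes $\leq h$.

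The only step that needs any care is the first one — making sure that the slope decomposition for $U$ really yields a finite-dimensional $S^{\leq h}$ stable under each $U_i$ — and this is immediate from compactness of $U$ together with the fact that the $U_i$ commute with $U$. The rest is elementary linear algebra (simultaneous triangularization) plus the standard non-archimedean estimate $|\lambda| \leq \|T\|$ for eigenvalues of a bounded operator.
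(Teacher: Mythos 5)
Your proof is correct and follows essentially the same route as the paper: restrict to the finite-dimensional $S^{\leq h}$, simultaneously upper-triangularize the commuting $U_i$, observe that the slopes of $U = \prod_i U_i$ on $S^{\leq h}$ are sums of the (non-negative, since $\|U_i\|\leq 1$) slopes of the $U_i$, and conclude each summand is $\leq h$. The only addition you make is to spell out why $S^{\leq h}$ is $U_i$-stable, which the paper takes for granted.
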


\begin{proof}By definition we have that $S^{\leq h}$ is a finite dimensional subspace of $S$. Therefore by choosing a basis we can view the $U_i$ operators as matrices. Now since the $U_i$ are pairwise commuting operators, we can simultaneously upper triangularize them (after possibly extending the base field). From this it follows that the eigenvalues of $U$ acting on $S^{\leq h}$ are the product of the eigenvalues of the $U_i$. 
	
	Now since the slopes of an operator are simply the $p$-adic valuation of its eigenvalues, we have that on $S^{\leq h}$ the slopes of $U$ are the sum of the slopes of the $U_i$ operators and therefore, since they all have positive slopes, it follows that the slopes of the $U_i$ acting on $S^{\leq h}$ are all $\leq h$ as required.
	
\end{proof}
	After renormalizing our operators, we can apply this to our situation to see that since $U_p=\prod_i U_{\gothp_i}$ is compact, then we have a slope decomposition for any $h$. Moreover, for each $h$ we have that the slope of each $U_{\gothp_i}$ acting on $R_{k,r}^{ h}$ is less than or equal to $h$. With this we can prove the following Lemma:

\begin{lem}\label{u} There is a  Zariski dense subset $ X' \subset \W^G$, such that for each $i$,  $U_{\gothp_i}$ acts semisimply on $R_{k,r}^{ h}$ for ${k,r} \in X'$.

\end{lem}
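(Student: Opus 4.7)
The plan is to decompose $R_{k,r}^{h}$ into joint eigenspaces for the Hecke operators away from $p$ and analyze each $U_{\gothp_i}$ on the pieces. The operators $T_{\gothq}, S_{\gothq}$ for $\gothq \nmid \nn\d p$ commute with each $U_{\gothp_i}$ and are normal with respect to the Petersson inner product on cusp forms, so they act semisimply. Hence $R_{k,r}^{h}$ splits as a finite direct sum of joint eigenspaces $R_{\pi}$ indexed by the cuspidal automorphic representations $\pi$ of $\GL_2/F$ that contribute at this weight and slope. For a fixed $\gothp_i$, by strong multiplicity one together with the Iwahori structure at $\gothp_i$, $U_{\gothp_i}$ acts on $R_\pi$ through a local tensor factor of dimension at most two.

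On a one-dimensional local factor $U_{\gothp_i}$ is a scalar; on a two-dimensional one its characteristic polynomial is the $\gothp_i$-Hecke polynomial
\[
H_{\pi,\gothp_i}(X) = X^2 - a_{\gothp_i}(\pi)\, X + c_{\pi,i}(k,r),
\]
where $a_{\gothp_i}(\pi)$ is the Hecke eigenvalue and $c_{\pi,i}(k,r)$ is an explicit expression in the central character of $\pi$ and a power of $\Norm(\gothp_i)$ dictated by $(k,r)$. Thus $U_{\gothp_i}$ acts semisimply on $R_\pi$ precisely when the discriminant $\Delta_{\pi,i} := a_{\gothp_i}(\pi)^{2} - 4 c_{\pi,i}(k,r)$ is nonzero, and the bad locus of weights in $X$ is the finite union over contributing $\pi$ and over $i$ of the vanishing loci of the $\Delta_{\pi,i}$.

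To globalize I would cover $\W^G$ by slope-adapted affinoids $V$ (from \cite[\S 4]{buzeig}). Over each such $V$, the slope-$\leq h$ part of $\CX_G$ is finite and surjective onto $V$ by Theorem~\ref{AIPeig}, so $a_{\gothp_i}$ and the central character extend to analytic functions on it, and $\Delta_i$ becomes an analytic function on this finite cover. Its non-vanishing locus projects to a Zariski-open subset of $V$ which, provided $\Delta_i \not\equiv 0$ on each irreducible component, is Zariski dense. Intersecting with $X$ and taking the union of these subsets over the cover yields the desired $X'$.

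The principal obstacle is to rule out identical vanishing of $\Delta_i$ on an irreducible component of the slope-$\leq h$ locus of $\CX_G$. This is excluded by Ramanujan--Petersson for cuspidal Hilbert newforms of parallel weight $\geq 2$ (Blasius, Taylor): at any non-CM classical point the roots of $H_{\pi,\gothp_i}$ are distinct complex-conjugate Weil numbers, so $\Delta_i \neq 0$ there; combined with the very Zariski density of classical points (Proposition~\ref{handens}) this forces $\Delta_i \not\equiv 0$ on any component, completing the argument.
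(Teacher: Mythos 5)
Your route is genuinely different from the paper's, and it has a real gap. The paper decomposes $R_{k,r}^{h}$ into $\gothp_i$-new and $\gothp_i$-old parts via Atkin--Lehner theory and observes that on each $2$-dimensional $\gothp_i$-old block the characteristic polynomial of $U_{\gothp_i}$ is $X^2 - a_{\gothp_i}X + N_{F/\QQ}(\gothp_i)^{r+1}\Psi(\gothp_i)$; it then simply chooses $r$ so large that $h < l_{\gothp_i}(r+1)/2$, whence by the Newton polygon at most one root of this polynomial has valuation $\leq h$, so the slope-$\leq h$ part of that block is at most one-dimensional and $U_{\gothp_i}$ acts there as a scalar. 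Semisimplicity of $U_{\gothp_i}$ on the \emph{whole} old block is never needed. You instead try to prove that the discriminant of the Hecke polynomial is generically nonzero, which is a much stronger statement than what is required.

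The step that fails is the appeal to Ramanujan--Petersson to conclude $\Delta_{\pi,i}\neq 0$ at non-CM classical points. Ramanujan gives that the two roots are Weil numbers of the same archimedean absolute value and that they are complex conjugates of one another; it does not force them to be distinct, and the extremal case $a_{\gothp_i}=\pm 2\sqrt{c_{\pi,i}}$ is exactly the boundary allowed by Ramanujan. Whether this case can occur (equivalently, whether $U_{\gothp}$ acts semisimply on classical spaces) is a well-known hard problem: Coleman--Edixhoven showed for weight $2$ over $\QQ$ that it follows from the Tate conjecture, and it is open in general. So the assertion that $\Delta_i$ does not vanish identically on a component of the slope-$\leq h$ locus is not justified by what you cite; your Zariski-density argument then has nothing to stand on. A secondary concern is the claim that $\Delta_i$ is analytic on the slope-adapted cover: the constant term of the Hecke polynomial contains the factor $N_{F/\QQ}(\gothp_i)^{r+1}$, which is not a $p$-adically analytic function of the weight coordinate $r$, so the discriminant does not interpolate as a rigid-analytic function without further renormalization. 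Both issues disappear if you abandon the attempt to control the discriminant and instead argue, as the paper does, that for weights with $r$ sufficiently large relative to $h$ the valuation of the constant term exceeds $2h$, so only one root of the Hecke polynomial can have slope $\leq h$ and the slope-truncated old subspace is already an eigenline for $U_{\gothp_i}$.
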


\begin{proof}
	
	Using Lemma \ref{7.8} the result is a simple generalization of the classical situation, as is done in  \cite[Theorem 3.30]{joelp}, or from the proof of  \cite[Theorem 4.2]{colem}. But for completeness we prove it here.
	
	First note that we can decompose $R_{k,r}^h$ into its $\gothp_i$-new and $\gothp_i$-old parts. The action of  $U_{\gothp_i}$ on $R_{k,r}^{h,\gothp_i{\text{-}\new}}$ is normal and hence diagonalizable. With this we are reduced to showing that this operator acts semisimply on $R_{k,r}^{h,\gothp_i{\text{-}\old}}.$ In order to prove this, it is enough to show that on each  generalized $\pmb{T}$-eigenspace of  $R_{k,r}^{h,\gothp_i\text{-old}}$ it acts semisimply. Each of these spaces will correspond to a newform $f$ of (lower) level not divisible by $\gothp_i$. Now, let  $a_{\gothp_i}=\aa(\gothp_i,f|T_{\gothp_i})$ be the $T_{\gothp_i}$ eigenvalue of $f$ and $\Psi$ its nebentypus. Since we are assuming that for each $i$, we have $(\nn\d,\gothp_i)=1$, then it follows from Atkin-Lehner Theory that each of these $\gothp_i$-old subspaces is 2-dimensional, and generated by $f$ and $f|B_{\gothp_i}$ (see \cite{lw93}, for the definition of $B_{\gothp_i}$). A simple calculation then shows that on this subspace the $U_{\gothp_i}$ operator has minimal polynomial   given by $$X^2-a_{\gothp_i}X+N_{F / \QQ}(\gothp_i)^{r+1}\Psi(\gothp_i).$$
	
	Therefore, since  $N_{F / \QQ}(\gothp_i)=p^{l_{\gothp_i}}$ (here $l_{\gothp_i}$ is the residue degree), we see that if we pick $(k,r)$, such that $r > (2h-l_{\gothp_i})/l_{\gothp_i}$, then  $h< l_{\gothp_i}(r+1)/2$. Therefore, the polynomial must have a unique root $\alpha$ with valuation $\leq h$, from which it follows that on the generalized $\pmb{T}$-eigenspace of $R_{k,r}^{h,\gothp_i\text{-old}}$ corresponding to $f$, we have that $U_{\gothp_i}$ acts as the scalar $\alpha$. Hence it is diagonalizable. This then shows that on $R_{k,r}^h$ the $U_{\gothp_i}$ operators act semisimply for $(k,r)$ large enough as required.
	
\end{proof}

	From this it follows that for any $h \geq 0$, the operators $U_{\gothp_i}$ act semisimply on $S_{k,r}(U')^{\leq h}$ for $(k,r)$ in a Zariski dense subset of $\W^G$, proving Proposition \ref{sem}. Then by Lemma \ref{red}, we have at once that $\mathscr{X}_D^{red} \cong \mathscr{X}_D$ and $\mathscr{X}_G^{red} \cong \mathscr{X}_G$, which proves Corollary \ref{cor62}.

\begin{rmrk} In light of Theorem~\ref{thm:jl-interpolate} and Remark~\ref{rem:jl}, we see that for $g$ even and $D$ totally definite with $\d =1$, we have an isomorphism of
	eigenvarieties $\iota_D:\,\CX_{D}^{red} \stackrel{\sim}{\to}\CX_G^{red}$. However, for $g$ odd and $D$ totally definite, the closed immersion $\iota_D:\,\CX_{D}^{red} \hookrightarrow\CX_G^{red}$ is never an isomorphism since $\d \neq 1$. At best, we can say that its image is the $\d$-new part of of $\CX_G^{red}$ as in the case of modular forms over $\QQ$ (cf. \cite{chenjlc}).
\end{rmrk}

	\begin{rmrk}\label{occoh}
	We note that, for $g$ odd, there are alternative constructions for the eigenvariety $\CX_{D}^{red}$. Let $D$ be the quaternion algebra ramified at all infinite places but one, with
	$\d = 1$. Then Brasca~\cite{brass} constructs an eigenvariety associated to $\CX_{D}^{red}$ from which one can use the above to obtain a closed immersion $\iota_D:\CX_{D}^{red} \hookrightarrow \CX_G^{red}$, which is an isomorphism on the closed subvariety given by parallel weights. His construction
	combines the theory of Shimura curves with work of Andreatta-Iovita-Pilloni to construct the relevant eigenvarieties. 
\end{rmrk}

\begin{rmrk}
	Using the work of \cite{Hansen}, one can construct eigenvarieties coming from overconvergent cohomology groups associated to any quaternion algebra. In this case, the resulting eigenvarieties are larger than the ones we have constructed, in particular, they may not be equidimensional of dimension $g+1$. Using a more general version of the Interpolation theorem as in $loc. cit.$ one can obtain closed immersions between the `cores' (as defined in $loc.cit.$) of the relevant eigenvarieties.
\end{rmrk}

\end{section}

\bibliographystyle{abbrv}

\bibliography{biblio(3)}

\end{document}